\newif\ifpaper
	\definecolor{darkblue}{rgb}{0.0,0.0,0.8}
	\definecolor{darkred}{rgb}{0.8,0.0,0.0}
\newtheorem{lem}{Lemma}
\newtheorem{prop}{Proposition}
\newtheorem{rem}{Remark}
\newtheorem{exa}{Example}
\newtheorem{thm}{Theorem}
\newtheorem{cor}{Corollary}
\newcommand{\sspcoeff}{{\mathcal C}}
\newcommand{\hFE}{h_\textup{FE}}
\newcommand{\hmax}{h_\textup{max}}
\newcommand{\cfl}{\nu}
\newcommand{\cflFE}{\cfl_{\textup{FE}}}
\newcommand{\order}{{\mathcal O}}
\newcommand{\rhoFE}{\varrho_{\textup{FE}}}
\newcommand{\N}{{\cal{N}}}
\newcommand{\Ss}{{\cal{S}}}
\newcommand{\Hh}{{\cal{H}}}
\def\defaultgraphic{\includegraphics[width=0.48\textwidth]{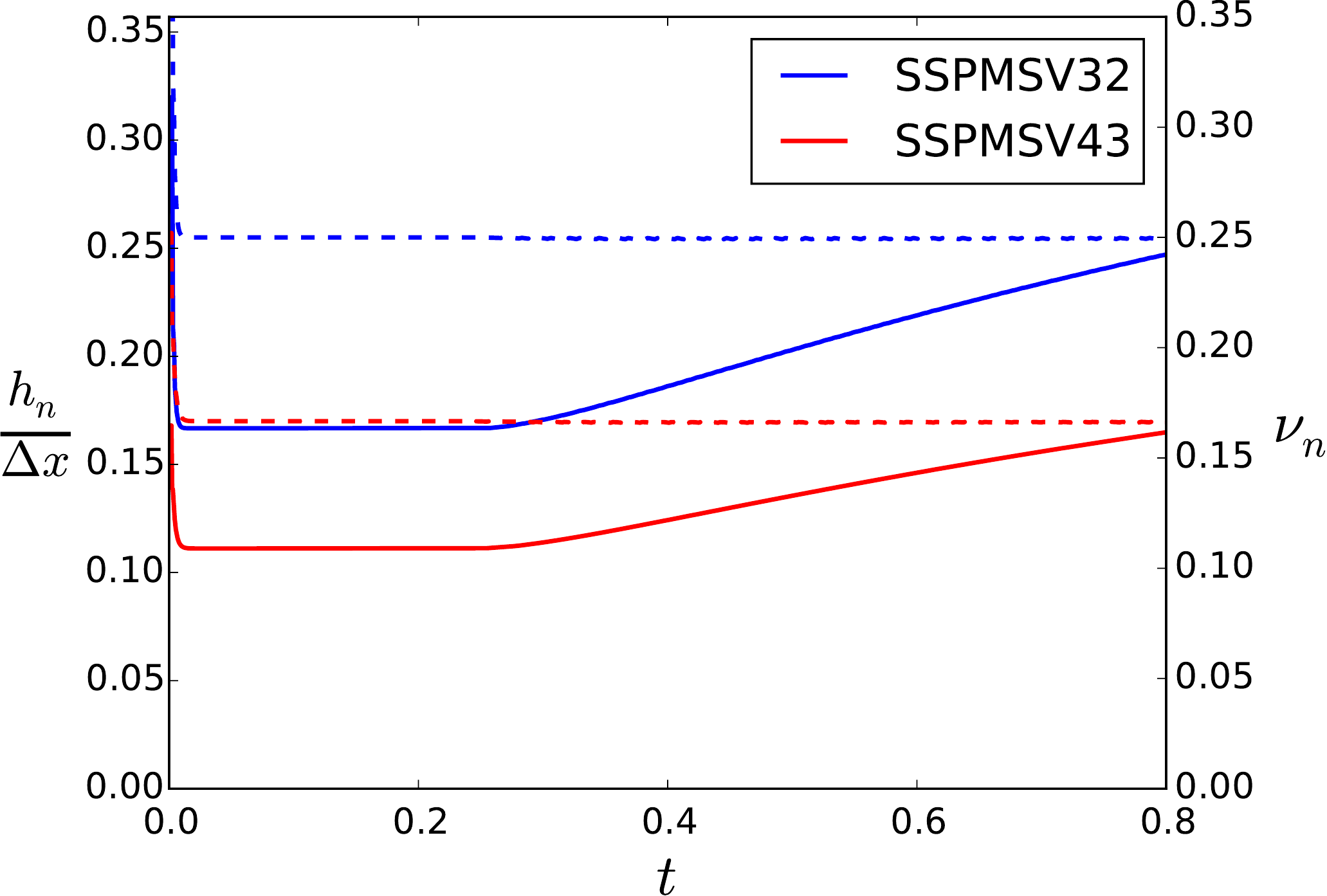}}
\newlength\graphicheight
\title{Strong stability preserving explicit linear multistep methods with variable step size}
\author{Yiannis Hadjimichael \and David I. Ketcheson\thanks{Corresponding author.} \and Lajos L{\'o}czi
\and Adri\'an N{\'e}meth\thanks{
    Authors listed alphabetically.    
    Author email addresses: {\texttt{\{yiannis.hadjimichael, david.ketcheson, 
lajos.loczi\}@kaust.edu.sa}},
{\texttt{nemetha@sze.hu}}. The fourth author was partially supported by the grant T\'AMOP-4.2.2.A-11/1/KONV-2012-0012. This work was supported by the King Abdullah University of Science and Technology (KAUST), 4700 Thuwal, 23955-6900, Saudi Arabia.}}
\begin{document}
\maketitle

\begin{abstract}
Strong stability preserving (SSP) methods are designed primarily for time
integration of nonlinear hyperbolic PDEs, for which the permissible
SSP step size varies from one step to the next.
We develop the first SSP linear multistep methods (of order two and three) with variable step
size, and prove their optimality, stability, and convergence.  
The choice of step size for multistep SSP methods is an
interesting problem because the allowable step size depends on the SSP coefficient,
which in turn depends on the chosen step sizes.
The description of the methods includes an optimal step-size strategy.
We prove sharp upper bounds on the allowable step size for explicit SSP linear multistep
methods and show the existence of methods with arbitrarily high
order of accuracy.
The effectiveness of the methods is demonstrated through numerical examples.
\end{abstract}

\ifpaper
	\begin{keywords}
		strong stability preservation, monotonicity, linear multistep methods, variable step size,
		time integration
	\end{keywords}

	\begin{AMS}
		Primary, 65M20; Secondary, 65L06
	\end{AMS}

	\pagestyle{myheadings}
	\thispagestyle{plain}
	\markboth{SSP EXPLICIT LMMs WITH VARIABLE STEP SIZE}{Y. HADJIMICHAEL, D. I. KETCHESON, L. L{\'O}CZI,
	A. N{\'E}METH}
	\slugger{sinum}{xxxx}{xx}{x}{x--x}%slugger should be set to mms, siap, sicomp, sicon, sidma, sima, simax, sinum, siopt, sisc, or sirev
\fi

\section{Introduction}
Strong stability preserving (SSP) linear multistep methods (LMMs) with uniform step size have been
studied by several authors
\cite{lenferink1989,lenferink1991,hundsdorfer2003,ruuth2005,hundsdorfer2005,ketcheson2009a}.
In this work, we develop the first variable step-size SSP multistep methods.

The principal area of application of strong stability preserving methods is
the integration of nonlinear systems of hyperbolic conservation laws.
In such applications, the allowable step size $\hmax$ is usually determined by a CFL-like
condition, and in particular is inversely proportional to the fastest wave
speed.  This wave speed may vary
significantly during the course of the integration, and its variation cannot generally
be predicted in advance.  Thus a fixed-step-size code may be inefficient
(if $\hmax$ increases) or may fail completely (if $\hmax$ decreases).

SSP Runge--Kutta methods are used much more widely than
SSP LMMs.  Indeed, it is difficult to find examples of SSP LMMs used in realistic
applications; this may be due to the lack of a variable step-size (VSS) formulation.
Tradeoffs between Runge--Kutta and linear multistep methods have been discussed at
length elsewhere, but one reason for preferring LMMs over their Runge--Kutta counterparts
in the context of strong stability preservation stems from
the recent development of a high-order positivity preserving limiter~\cite{zhang2010maximum}.
Use of Runge--Kutta methods in conjunction with the limiter can lead to 
order reduction, so linear multistep methods are recommended~\cite{Gottlieb2011a,zhang2010maximum}.

There exist two approaches to variable step-size multistep
methods \cite{hairer1993}.  In the first, polynomial interpolation is used
to find values at equally spaced points, and then the fixed-step-size method
is used.  In the second, the method coefficients are varied to maintain high
order accuracy based on the solution values at the given (non-uniform) step intervals.  Both
approaches are problematic for SSP methods; the first, because the interpolation
step itself may violate the SSP property, and the second, because the SSP
step size depends on the method coefficients.  Herein we pursue the second
strategy.

The main contributions of this work are:
\begin{itemize}
    \item sharp bounds on the SSP coefficient of variable step-size SSP LMMs (Theorems \ref{thm:upperbound}-\ref{thm:optimal3rdorder});
    \item optimal methods of orders two and three (Sections \ref{sec:2ndordermethods} and \ref{sec:3rdordermethods});
    \item existence of methods of arbitrary order (Theorem \ref{sec3Thm5});
    \item analysis of the greedy step-size strategy (Section \ref{sec:stepsize});
    \item proof of stability and convergence of the optimal methods (Theorems \ref{thm:stable}-\ref{ourconvergencethm}).
\end{itemize}

The rest of the paper is organized as follows. 
In Section \ref{sec:sspreview},
we review the theory of SSP LMMs while recognizing that the
forward Euler permissible step size may change from step to step.  The main
result, Theorem \ref{thm:ssplmm}, is a slight refinement of the standard one.
In Section \ref{sec:optimal} we show how an optimal SSP multistep formula may
be chosen at each step, given the sequence of previous steps.
In Section \ref{sec:full methods} we provide for convenience a description
of the two of the simplest and most useful methods in this work.
In Sections \ref{sec:2ndordermethods} and \ref{sec:3rdordermethods} we
derive and prove the optimality of several second- and third-order methods.
In Section \ref{sec:stepsize} we investigate the relation between the SSP step size, the
method coefficients, and the step-size sequence.  We develop step-size
strategies that ensure the SSP property under mild assumptions on the problem.
In Section \ref{sec:convergence} we prove that the methods, with the
prescribed step-size strategies, are stable and convergent.  In Section \ref{sec:examples}
we demonstrate the efficiency of the methods with some numerical examples.
Finally, Sections \ref{sec:proofs0} and \ref{sec:proofs} contain the proofs of
the more technical theorems and lemmas.

Two topics that might be pursued in the future based on this work are:
\begin{itemize}
    \item variable step-size SSP LMMs of order higher than three;
    \item variable step-size versions of SSP methods with multiple steps and multiple stages.
\end{itemize}

\section{SSP linear multistep methods\label{sec:ssplmm}}
We consider the numerical solution of the initial value problem
\begin{align} \label{ivp}
u'(t) & = f(u(t)) & u(t_0) = u_0,
\end{align}
for $t \in [t_0,t_0+T]$
by an explicit linear multistep method.  If a fixed numerical step size $h$ is
used, the method takes the form
\begin{align} \label{lmm-css}
    u_n & = \sum_{j=0}^{k-1} \left( \alpha_j u_{n-k+j} + h \beta_j f(u_{n-k+j}) \right) & n \ge k.
\end{align}
Here $k$ is the number of steps and $u_n$ is an approximation to the solution $u(n h)$.

Now let the step size vary from step to step so that $t_n=t_{n-1}+h_n$.
In order to achieve the same order of accuracy,
the coefficients $\alpha, \beta$ must also vary from step to step:
\begin{align} \label{lmm}
u_n = \sum_{j=0}^{k-1} \left( \alpha_{j,n} u_{n-k+j} + h_n \beta_{j,n} f(u_{n-k+j}) \right).
\end{align}

At this point, it is helpful to establish the following terminology.  We use the term
{\em multistep formula}, or just {\em formula}, to refer to a set of coefficients 
$\alpha_{j}$ ($=\alpha_{j,n}$), $\beta_{j}$ ($=\beta_{j,n}$)
that may be used at step $n$. 
 We use the term {\em multistep method} to refer to a full
time-stepping algorithm that includes a prescription of how to choose a formula $(\alpha_{j,n},\beta_{j,n})$
and the step size $h_n$ at step $n$. 
%Throughout the article, the acronyms LM, FSS and VSS
%stand for \textit{linear multistep}, \textit{fixed step size} and \textit{variable step size}, respectively.

For $1\le j \le k$ let
\begin{equation}\label{littleomegadef}
\omega_{j} := \frac{h_{n-k+j}}{h_n}>0
\end{equation}
denote the step-size ratios and
\begin{equation}\label{label13}
\left\{
\begin{aligned}
    \Omega_{0} := & 0,  & \\  
    \Omega_{j} :=  &  \sum_{i=1}^j \omega_{i} & \text{ for }\quad  & 1\le j\le k.
\end{aligned}
\right.
\end{equation}
Note that the values $\omega$ and $\Omega$ depend on $n$, but we often suppress that dependence
since we are considering a single step.\footnote{Our definition of $\omega$
    differs from the typical approach in the literature on variable step-size
    multistep methods, where only ratios of adjacent step sizes are used.  The
    present definition is more convenient in what follows.} 
It is useful to keep in mind that $\Omega_j = j$ if the step size is fixed.
Also the simple relation $\Omega_k=\Omega_{k-1}+1$ will often be used.

\subsection{Strong stability preservation} \label{sec:sspreview}
We are interested in initial value problems \eqref{ivp} whose solution
satisfies a monotonicity condition
\begin{align} \label{monotonicity}
\|u(t+h)\| & \le \|u(t)\| & \text{for } h \ge 0,
\end{align}
where $\|\cdot\|$ represents any convex functional (for instance, a norm).
We assume that $f$ satisfies the (stronger) forward Euler condition
\begin{align} \label{FE-condition}
\|u + h f(u)\| & \le \|u\| & \text{for } 0\le h \le \hFE(u).
\end{align}
The discrete monotonicity condition \eqref{FE-condition} implies the continuous
monotonicity condition \eqref{monotonicity}.

%In the literature on SSP methods, it is often assumed that 
%the value of $\hFE$ is independent of $u$.  
The primary application of SSP methods is in the time integration of nonlinear
hyperbolic PDEs.  In such applications, $\hFE$ is proportional to the CFL
number
\begin{align}
    \nu = h \frac{a(u)}{\Delta x}
\end{align}
where $a(u)$ is the largest wave speed appearing in the problem.  This speed
depends on $u$.  For instance, in the case of Burgers' equation
\begin{align}
    u_t + \left(\frac{u^2}{2}\right)_x & = 0,
\end{align}
we have $a(u) = \max_x |u|$.  For scalar conservation laws like Burgers' equation,
it is possible to determine a value of $\hFE$, based on
the initial and boundary data, that is valid for all time.  But for general systems of
conservation laws, $a(u)$ can grow in time and so the minimum value of $\hFE(u)$
cannot be determined without solving the initial value problem.
We will often write just $\hFE$ for brevity, but the dependence of $\hFE$ on
$u$ should be remembered.

We will develop linear multistep methods \eqref{lmm} that satisfy the discrete
monotonicity property
\begin{align} \label{discrete-monotonicity}
\|u_n\| \le \max(\|u_{n-k}\|,\|u_{n-k+1}\|,\dots,\|u_{n-1}\|).
\end{align}
The class of methods that satisfy \eqref{discrete-monotonicity}
whenever $f$ satisfies \eqref{FE-condition} are known as strong stability
preserving methods.  The most widely used SSP methods are one-step
(Runge--Kutta) methods.  When using an SSP multistep method, an SSP
Runge--Kutta method can be used to ensure monotonicity of the starting
values.  In the remainder of this work, we focus on conditions for monotonicity
of subsequent steps (for any given starting values).

The following theorem refines a well-known
result in the literature, by taking into account the dependence of $\hFE$ on $u$.
\begin{thm} \label{thm:ssplmm}
    Suppose that $f$ satisfies the forward Euler condition \eqref{FE-condition}
    and that the method \eqref{lmm} has non-negative coefficients $\alpha_{j,n}, \beta_{j,n} \ge 0$.
    Furthermore, suppose that the time step is chosen so that
    \begin{align}\label{ssp-step-sizefirst}
        0 \le h_n \le \min_{0 \le j \le k-1} \left(\frac{\alpha_{j,n}}{\beta_{j,n}}\, \hFE(u_{n-k+j})\right)
    \end{align}
    for each $n$, where the ratio $\alpha_{j,n}/\beta_{j,n}$ is understood as $+\infty$
    if $\beta_{j,n} = 0$.
    Then the solution of the initial value problem \eqref{ivp} given by the
    LMM \eqref{lmm} satisfies the monotonicity condition \eqref{discrete-monotonicity}.
\end{thm}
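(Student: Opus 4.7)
The plan is the classical Shu--Osher style convex-combination argument, adapted to the variable-step-size coefficients. The key algebraic observation is that a consistent LMM satisfies $\sum_{j=0}^{k-1}\alpha_{j,n}=1$, so the non-negativity hypothesis lets us read the right-hand side of \eqref{lmm} as a convex combination of perturbed states, each of which is a forward Euler step from a previous $u_{n-k+j}$ with some effective step size.

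Concretely, I would first invoke consistency to record $\sum_{j=0}^{k-1}\alpha_{j,n}=1$ and $\alpha_{j,n}\ge0$, so the $\alpha_{j,n}$'s form a convex-combination weight vector. For any index $j$ with $\alpha_{j,n}>0$, rewrite the contribution as
\begin{equation*}
\alpha_{j,n}u_{n-k+j}+h_n\beta_{j,n}f(u_{n-k+j})
=\alpha_{j,n}\Bigl(u_{n-k+j}+\frac{h_n\beta_{j,n}}{\alpha_{j,n}}f(u_{n-k+j})\Bigr),
\end{equation*}
and observe that the bracketed expression is precisely a forward Euler step with step size $h_n\beta_{j,n}/\alpha_{j,n}$. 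The step-size hypothesis \eqref{ssp-step-sizefirst} is exactly the statement that $h_n\beta_{j,n}/\alpha_{j,n}\le\hFE(u_{n-k+j})$, so \eqref{FE-condition} applies and bounds the norm of that bracket by $\|u_{n-k+j}\|$. For indices where $\alpha_{j,n}=0$, the convention in \eqref{ssp-step-sizefirst} together with $h_n\ge0$ forces $\beta_{j,n}=0$ (otherwise the bound would require $h_n=0$, a trivial case), so such terms drop out entirely and cause no trouble.

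Putting these pieces together, I would apply convexity of $\|\cdot\|$ to get
\begin{equation*}
\|u_n\|\le\sum_{j=0}^{k-1}\alpha_{j,n}\Bigl\|u_{n-k+j}+\tfrac{h_n\beta_{j,n}}{\alpha_{j,n}}f(u_{n-k+j})\Bigr\|\le\sum_{j=0}^{k-1}\alpha_{j,n}\|u_{n-k+j}\|\le\max_{0\le j\le k-1}\|u_{n-k+j}\|,
\end{equation*}
where the last step uses $\sum\alpha_{j,n}=1$ and $\alpha_{j,n}\ge0$. This is exactly \eqref{discrete-monotonicity}.

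The only subtle point, and the main obstacle, is the bookkeeping with degenerate coefficients and the $\alpha_{j,n}/\beta_{j,n}=+\infty$ convention: one must verify that either $\alpha_{j,n}>0$ (so the forward-Euler split is legitimate) or $\beta_{j,n}=0$ (so the term is purely $\alpha_{j,n}u_{n-k+j}$ and trivially satisfies the bound, including the case $\alpha_{j,n}=0$ which contributes nothing). Apart from this case analysis, the argument is the standard monotonicity estimate and is essentially a one-line rearrangement followed by convexity. The ``refinement'' over the textbook statement is merely that $\hFE$ is allowed to depend on the argument $u_{n-k+j}$ of each stored value, which the above argument accommodates automatically because each forward Euler sub-step is applied at its own base point.
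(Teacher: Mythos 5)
Your proof is correct. The paper gives no explicit proof of Theorem~\ref{thm:ssplmm} (it is presented as a refinement of a classical result), but the Shu--Osher convex-combination argument you give is exactly the canonical one, and your closing observation that each forward Euler sub-step is taken at its own base point $u_{n-k+j}$ (so that the $u$-dependence of $\hFE$ is handled automatically) is precisely the refinement the theorem claims; the only tiny imprecision is the sentence ``forces $\beta_{j,n}=0$'' --- what the condition \eqref{ssp-step-sizefirst} actually forces in that degenerate case is $\beta_{j,n}=0$ \emph{or} $h_n=0$, and in the latter case $u_n=\sum_j\alpha_{j,n}u_{n-k+j}$ is still a convex combination, so the bound holds either way (as your parenthetical already acknowledges).
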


\begin{rem}
    The step-size restriction \eqref{ssp-step-sizefirst} is also {\em
    necessary} for monotonicity in the sense that, for any method \eqref{lmm},
    there exists some $f$ and starting values such that the monotonicity
    condition \eqref{discrete-monotonicity} will be violated if the step size
    \eqref{ssp-step-sizefirst} is exceeded.
\end{rem}

\begin{rem}\label{remark2} 
    Even in the case of the fixed-step-size method \eqref{lmm-css}, the theorem
    above generalizes results in the literature that are based on the assumption 
    of a constant $\hFE$.  It is natural to implement a step-size strategy
    that uses simply $\hFE(u_{n-1})$ in \eqref{ssp-step-sizefirst}, but this will not give
    the correct step size in general.  On the other hand, since $\hFE$ usually
    varies slowly from one step to the next, is often non-decreasing, and since
    the restriction \eqref{ssp-step-sizefirst}
    is often pessimistic, such a strategy will usually work well.
\end{rem}

Since $u_n$ (and hence $\hFE$) varies slowly from one step to the next,
it seems convenient to separate the factors in the upper bound in \eqref{ssp-step-sizefirst}
and consider the sufficient condition
\begin{align} \label{ssp-step-size}
    0 \le h_n \le \sspcoeff_n  \mu_n
\end{align}
where the SSP coefficient $\sspcoeff_n$ is
\begin{align}\label{sspcoeffdef}
    \sspcoeff_n & = \begin{cases} \max \left\{ r\in{\mathbb R^+} : \alpha_{j,n}-r\beta_{j,n} \ge 0\right\} & \text{ if } \alpha_{j,n} \ge 0, \  \beta_{j,n} \ge 0 \text{ for all $j$;} \\
0 & \text{otherwise,} \end{cases}
\end{align}
and
\begin{equation}\label{generalmundef}
\mu_n:=\min_{0 \le j \le k-1} \hFE(u_{n-k+j})\quad\quad (n\ge k).
\end{equation}
Note that in general the SSP coefficient varies from step to step, since it depends on the
method coefficients.

\subsection{Optimal SSP formulae\label{sec:optimal}}
For a given order $p$, number of steps $k$, and previous step-size
sequence $h_{n-1}, h_{n-2}, \dots$, we say that a multistep formula
is optimal if it gives the largest possible SSP coefficient $\sspcoeff_n$ in \eqref{ssp-step-size}
and satisfies the order conditions.  In this section we formulate
this optimization problem algebraically.
%In this section, we consider the choice of formula coefficients at step $n$ with $n\ge k$. 
%Thus, we assume that approximate solution values are given at $t_{n-k+j}$ for $1\le j \le k-1$
%and we seek to approximate the solution at $t_n$.
The linear multistep formula takes the form
\begin{align} \label{lmf}
u_n = \sum_{j=0}^{k-1} \left( \alpha_{j} u_{n-k+j} + h_n \beta_{j} f(u_{n-k+j}) \right).
\end{align}
Here we have omitted the subscript $n$ on the method coefficients to simplify
the notation.
The conditions for formula \eqref{lmf} to be consistent of order $p$ are
\begin{subequations} \label{oc-vss}
\begin{align}
    \sum_{j=0}^{k-1} \alpha_{j} & = 1 \\
    \sum_{j=0}^{k-1} \left(\Omega_{j}^m \alpha_{j} + m \Omega_{j}^{m-1} \beta_{j}\right) & = \Omega_{k}^m & 1 \le m \le p.
\end{align}
\end{subequations}
Let us change variables by introducing
\begin{align*}
    \delta_{j} := \alpha_{j} - r \beta_{j}.
\end{align*} 
Then the SSP coefficient of the formula is just the largest $r$ such that
all $\delta_j, \beta_j$ are non-negative \cite{lenferink1989,ketcheson2009a}.
In these variables, the order conditions \eqref{oc-vss} become
\begin{subequations} \label{oc-vss-delta}
\begin{align}
    \sum_{j=0}^{k-1} \left(\delta_{j} + r\beta_{j}\right) & = 1 \\
    \sum_{j=0}^{k-1} \left(\Omega_{j}^m (\delta_{j} + r\beta_{j}) + m \Omega_{j}^{m-1} \beta_{j}\right) & = \Omega_{k}^m & 1 \le m \le p. 
\end{align}
\end{subequations}
We will refer to a formula by the triplet $(\omega,\delta,\beta)$.
%Notice that \eqref{sspcoeffdef} implies for any $0\le r\le\sspcoeff$ that we have $\delta_j\ge 0$. 
Given $p$, $k$, and a set of step-size ratios $\omega_{j}$, the formula with the largest SSP coefficient
for the next step
 can be obtained by finding the largest $r$ such that
\eqref{oc-vss-delta} has a non-negative solution $\delta, \beta \ge 0$.
This could be done following the approach of \cite{ketcheson2009a}, by bisecting in $r$ and solving
a sequence of linear programming feasibility problems.  The solution of this optimization problem
would be the formula for use in the next step.  We do not pursue that approach here.
Instead, we derive families of formulae that can be applied based on the sequence
of previous step sizes.

%Since solving a sequence of linear programming problems at every time step seems cumbersome,
%in the rest of the paper we instead pursue a more traditional
%approach in which a prescribed family of formulae (with coefficients parametrized by the 
%values $\Omega_j$)
%is used throughout the computation.  We will show that
%in many cases 
%this is
%equivalent to the technique of finding and applying the optimal method at each step.

\subsection{Two optimal methods}\label{sec:full methods}
For convenience, here we list the methods most likely to be of interest
for practical application.  These (and other methods) are derived and analyzed
in the rest of the paper.  Recall that 
$\mu_n$ has been defined in \eqref{generalmundef}, and 
we assume $n\ge k$. The definition of the $\Omega$ quantities 
(with dependence on $n$ suppressed) is given in 
\eqref{littleomegadef}-\eqref{label13}.

\subsubsection{SSPMSV32}
Our three-step, second-order method is
%(compare \eqref{VSSSSPLMMk2} and \eqref{general-greedy} with $k=3$):
\[
u_n = \frac{\Omega_{2}^2-1}{\Omega_{2}^2} \left( u_{n-1} + \frac{\Omega_{2}}{\Omega_{2}-1}\, h_n \,f(u_{n-1})\right) + \frac{1}{\Omega_{2}^2}\, u_{n-3}
\]
with SSP step size restriction
\begin{equation*}
h_n \le \frac{h_{n-2}+h_{n-1}}{h_{n-2}+h_{n-1} + \mu_n}\cdot\mu_n.
\end{equation*}
If the step size is constant, this is equivalent to the known optimal
second-order three-step SSP method.
%\begin{rem}
%For any $k>3$, formulae \eqref{VSSSSPLMMk2} and \eqref{general-greedy} yield second-order methods with $k>3$ steps. 
%\end{rem}

\subsubsection{SSPMSV43}
Our four-step, third-order method is
%(compare \eqref{VSSSSPLMMk3} and \eqref{k45hndef} with $k=4$):
\begin{align*}
u_n = & \frac{(\Omega_{3}+1)^2 (\Omega_{3}-2)}{\Omega_{3}^3} \left( u_{n-1} + \frac{\Omega_{3}}{\Omega_{3}-2}\, h_n \,f(u_{n-1})\right) + \\
& \frac{3\Omega_{3}+2}{\Omega_{3}^3} \left( u_{n-4} + \frac{\Omega_{3}(\Omega_{3}+1)}{3\Omega_{3}+2}\, h_n \,f(u_{n-4})\right)
\end{align*}
with SSP step size restriction
\begin{equation*}
h_n \le \frac{\sum_{j=1}^{3} h_{n-j}}{\left(\sum_{j=1}^{3} h_{n-j} \right)+2\mu_n}\cdot\mu_n.
\end{equation*}
If the step size is constant, this is equivalent to the known optimal
third-order four-step SSP method.
%\begin{rem}
%Formulae \eqref{VSSSSPLMMk3} and
%\eqref{k45hndef} with $k=5$ yield a similar third-order method with five steps. 
%\end{rem}

\section{Existence and construction of optimal SSP formulae\label{sec:exandupper}}
In this section we consider the set of formulae satisfying \eqref{oc-vss-delta}
for fixed order $p$, number of steps $k$ and some step-size sequence $\Omega_j$.  
It is natural to ask whether any such formula exists,
what the supremum of achievable $r$ values is (i.e., the optimal SSP
coefficient $\sspcoeff$), and whether that supremum is attained by some formula. 
Here we give answers for certain classes.

In Section \ref{sec:upperboundon} we discuss how large an SSP coefficient can be,
and prove the existence of a formula with the maximum SSP coefficient.
In Sections \ref{sec:2ndordermethods} and \ref{sec:3rdordermethods}
we construct some practical optimal formulae of order 2 and 3, while 
the existence of higher-order formulae is established in Section \ref{sec:higherordermethods}.
The theorems of the present section are proved in Section \ref{sec:proofs0}. 
Our theorems are based on \cite{LMMpaper} by extending the corresponding results of 
that paper to the variable step-size case. The basic tools in \cite{LMMpaper} include Farkas' lemma, 
 the duality principle,  and the strong duality theorem of linear programming 
\cite{schrijver1998theory}.

\subsection{Upper bound on the SSP coefficient and existence of an optimal formula}\label{sec:upperboundon}
In the fixed-step-size case, the classical upper bound $\sspcoeff \leq \frac{k-p}{k-1}$
on the SSP coefficient $\sspcoeff$ for a $k$-step 
 explicit linear multistep formula of order $p$ with $k\ge p$ was proved in \cite{lenferink1989}
together with the existence of optimal methods. 
\begin{thm}\label{thm:upperbound}
 Suppose that some time-step ratios $\omega_j$ are given.   
Then the SSP coefficient for a $k$-step explicit linear multistep formula with order of accuracy $p\ge 2$
 is bounded by
\begin{equation}\label{boundonComegadeltabeta}
    \sspcoeff(\omega,\delta,\beta) \leq \begin{cases}
        0 & \text{if } \Omega_k \le p \\
        \frac{\Omega_{k}-p}{\Omega_{k}-1} & \text{if } \Omega_k>p.
    \end{cases}
\end{equation}
Moreover, suppose there exists a $k$-step explicit linear multistep formula of order $p\ge 2$ with positive SSP coefficient. 
Then there is a $k$-step formula of order $p$ whose SSP coefficient is equal to the optimal 
one.
\end{thm}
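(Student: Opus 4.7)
The plan is to recast the existence of SSP formulae of a given order as a linear-programming (LP) feasibility problem in $(\delta,\beta)$ for a fixed $r\ge 0$, and then to extract both the upper bound and the attainment statement from LP duality, as in the fixed-step treatment of \cite{LMMpaper} but with the integer step indices replaced by the cumulative ratios $\Omega_j$ of \eqref{label13}.

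For a fixed $r \ge 0$, the system \eqref{oc-vss-delta} together with $\delta_j,\beta_j \ge 0$ is a linear feasibility problem in $\mathbb{R}^{2k}$. Applying Farkas' lemma, a certificate of infeasibility is a vector $(y_0,\ldots,y_p)\in\mathbb{R}^{p+1}$, which I encode as the polynomial $q(x) := \sum_{m=0}^{p} y_m x^m$ of degree at most $p$. A direct computation shows that pairing $y$ with the column of $\delta_j$ returns $q(\Omega_j)$, pairing it with the column of $\beta_j$ returns $r\,q(\Omega_j)+q'(\Omega_j)$, and pairing it with the right-hand side returns $q(\Omega_k)$. Thus the primal is infeasible if and only if there is a $q$ of degree $\le p$ satisfying $q(\Omega_j)\ge 0$ and $r\,q(\Omega_j)+q'(\Omega_j)\ge 0$ for $0\le j\le k-1$, together with $q(\Omega_k)<0$.

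To establish the upper bound \eqref{boundonComegadeltabeta}, for every $r > (\Omega_k-p)/(\Omega_k-1)$ with $\Omega_k>p$ I would construct such a certificate polynomial explicitly. Following the fixed-step template of \cite{LMMpaper}, a natural candidate is $q(x) = (c-x)\,P(x)$ with $P$ a carefully chosen degree-$(p-1)$ polynomial non-negative on $\{\Omega_0,\ldots,\Omega_{k-1}\}$ and $c\in[\Omega_{k-1},\Omega_k)$, so that $q(\Omega_j)\ge 0$ for $j<k$ and $q(\Omega_k)<0$ automatically. The derivative condition $r\,q+q'\ge 0$ at the rightmost node $\Omega_{k-1}$, combined with the relation $\Omega_k-\Omega_{k-1}=1$, is what pins down the critical threshold $(\Omega_k-p)/(\Omega_k-1)$. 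The case $\Omega_k \le p$ is handled separately by producing a certificate valid for every $r>0$, which forces $\sspcoeff = 0$.

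For the attainment statement, assume some formula has $\sspcoeff>0$ and set $r^\ast := \sup\{r\ge 0 : \eqref{oc-vss-delta} \text{ is feasible with } \delta,\beta\ge 0\}$, so $r^\ast>0$. Pick $r_n\uparrow r^\ast$ with associated solutions $(\delta^{(n)},\beta^{(n)})\ge 0$; the consistency equation $\sum_j\delta_j^{(n)}+r_n\sum_j\beta_j^{(n)}=1$ combined with non-negativity bounds both $\|\delta^{(n)}\|_1$ and $\|\beta^{(n)}\|_1$ uniformly, so Bolzano-Weierstrass yields a convergent subsequence whose limit $(\delta^\ast,\beta^\ast)\ge 0$ satisfies \eqref{oc-vss-delta} at $r=r^\ast$ by continuity; this limit supplies the optimal formula, and the already-proved upper bound gives $r^\ast \le (\Omega_k-p)/(\Omega_k-1)$. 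The heaviest technical step is the explicit polynomial construction in the upper-bound argument: one must verify that the Lenferink-type template of \cite{LMMpaper} continues to work for arbitrary positive ordered $\Omega_j$ with $\Omega_k-\Omega_{k-1}=1$, rather than the integer spacing of the fixed-step case, which is where most of the algebra of the proof will live.
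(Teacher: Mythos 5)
Your framing via LP duality/Farkas is the same as the paper's, and you correctly identify the certificate pairing: for a dual vector $y\in\mathbb{R}^{p+1}$ encoded as $q(x)=\sum_m y_m x^m$, the $\delta_j$-column gives $q(\Omega_j)$, the $\beta_j$-column gives $r\,q(\Omega_j)+q'(\Omega_j)$, and the RHS gives $q(\Omega_k)$. Your attainment argument is correct but genuinely \emph{different} from the paper's: you use primal compactness --- take $r_n\uparrow r^*$, note that $\sum_j\delta_j^{(n)}\le 1$ and $\sum_j\beta_j^{(n)}\le 1/r_n$ give uniform bounds (valid once you observe the feasible $r$-set is downward closed, which follows by replacing $\delta_j$ with $\delta_j+(r-r')\beta_j$ for $r'<r$), then pass to a limit. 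The paper instead works on the dual side: it uses the Farkas alternative (its Lemma~\ref{lem:farkas}) to show the set $\Hh$ of $r$ admitting a strict dual certificate is an open ray $(r^*,+\infty)$, via a perturbation of the certificate by the constant $|q_n(\Omega_k)|/2$. Your compactness route is arguably more elementary and does not require the perturbation trick; the paper's route keeps everything in the dual and exhibits the optimality gap explicitly.

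The gap in your proposal is in the upper-bound step, which is the substantive half of the theorem. You write that ``a natural candidate is $q(x)=(c-x)P(x)$ with $P$ a carefully chosen degree-$(p-1)$ polynomial\ldots which is where most of the algebra of the proof will live,'' but you never choose $P$ or $c$ and never verify the dual inequalities, so the bound is asserted rather than proved. The paper's proof exhibits the explicit polynomial $q(x)=x^{p-1}(\Omega_k-x)$ (i.e.\ $c=\Omega_k$, $P(x)=x^{p-1}$, with $q(\Omega_k)=0$, which is the version needed for its Lemma~\ref{lem:dual} rather than the strict-inequality Farkas form you invoke) and checks, for $\Omega_k>p$ and $1\le j\le k-1$, that
\[
\frac{q'(\Omega_j)}{q(\Omega_j)}=\frac{p-1}{\Omega_j}-\frac{1}{\Omega_k-\Omega_j}\;\ge\;\frac{p-1}{\Omega_{k-1}}-\frac{1}{\Omega_k-\Omega_{k-1}}\;=\;-\frac{\Omega_k-p}{\Omega_k-1},
\]
using monotonicity of the logarithmic derivative and $\Omega_k-\Omega_{k-1}=1$, together with $q'(0)\ge 0$; and for $\Omega_k\le p$ that $q'(x)\ge x^{p-2}(p-\Omega_k)\ge 0$ on $[0,\Omega_k-1]$, giving $r=0$. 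That node-by-node verification is exactly the content your sketch defers, and without it the threshold $(\Omega_k-p)/(\Omega_k-1)$ is not established. You would also need to decide whether you want $q(\Omega_k)=0$ (paper's Lemma~\ref{lem:dual} form, one polynomial suffices) or $q(\Omega_k)<0$ (your Farkas form, where you must either supply a polynomial for each $r$ above the threshold or exploit monotonicity in $r$ of the dual constraints); as written your ``$c\in[\Omega_{k-1},\Omega_k)$'' choice commits you to the latter and changes the algebra at the rightmost node.
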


\subsection{Second-order formulae}\label{sec:2ndordermethods}

The bound in Theorem \ref{thm:upperbound} is sharp for $p=2$, as the following result shows.
\begin{thm}[Optimal second-order formulae with $k\ge 2$ steps] \label{thm:optimal2ndorder}
 Suppose that some time-step ratios $\omega_j$ are given.
Then there exists a second order linear multistep formula with $k$ steps and with positive SSP coefficient
 if and only if $\Omega_k > 2$. In this case, the optimal formula is 
    \begin{equation}\label{VSSSSPLMMk2}
    u_n = \frac{\Omega_{k-1}^2-1}{\Omega_{k-1}^2} \left( u_{n-1} 
                + \frac{\Omega_{k-1}}{\Omega_{k-1}-1}\, h_n \,f(u_{n-1})\right) 
                + \frac{1}{\Omega_{k-1}^2}\, u_{n-k},
    \end{equation}
 and has SSP coefficient
  \begin{equation}\label{eq17}
    \sspcoeff(\omega,\delta,\beta) = \frac{\Omega_k-2}{\Omega_k-1}.
  \end{equation}
\end{thm}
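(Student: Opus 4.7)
The plan is to combine Theorem \ref{thm:upperbound} (applied with $p=2$) with an explicit construction that attains the bound. The theorem immediately gives the necessity direction: if $\Omega_k\le 2$, then $\sspcoeff\le 0$, ruling out any formula with positive SSP coefficient; if $\Omega_k>2$, it yields the upper bound $\sspcoeff \le (\Omega_k-2)/(\Omega_k-1)$. The remaining content of Theorem \ref{thm:optimal2ndorder} is therefore to exhibit a formula that meets this upper bound.

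For $\Omega_k > 2$ I would make the ansatz---suggested by the classical fixed-step optimum---that only $\alpha_0$, $\alpha_{k-1}$, and $\beta_{k-1}$ are non-zero. This leaves three unknowns, matched by the three order conditions in \eqref{oc-vss} for $p=2$. Since $\Omega_0=0$, those conditions reduce to the linear system
\[
\alpha_0+\alpha_{k-1}=1,\qquad \Omega_{k-1}\alpha_{k-1}+\beta_{k-1}=\Omega_k,\qquad \Omega_{k-1}^2\alpha_{k-1}+2\Omega_{k-1}\beta_{k-1}=\Omega_k^2.
\]
Using $\Omega_k=\Omega_{k-1}+1$, I would solve it to get $\alpha_0=1/\Omega_{k-1}^2$, $\alpha_{k-1}=(\Omega_{k-1}^2-1)/\Omega_{k-1}^2$, and $\beta_{k-1}=\Omega_k/\Omega_{k-1}$, matching \eqref{VSSSSPLMMk2}. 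Because $\Omega_k>2$ implies $\Omega_{k-1}>1$, all three coefficients are strictly positive. Evaluating \eqref{sspcoeffdef} then gives $\sspcoeff=\alpha_{k-1}/\beta_{k-1}=(\Omega_{k-1}-1)/\Omega_{k-1}=(\Omega_k-2)/(\Omega_k-1)$, which matches the upper bound. This proves \eqref{eq17} and, by exhibiting a valid formula, closes the sufficiency direction of the existence equivalence.

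There is no serious technical obstacle---once the ansatz is fixed, the remainder is a routine linear-algebra verification. The main thing to justify is the ansatz itself. It is natural because three non-negativity-preserving degrees of freedom (two $\alpha$'s at the endpoints together with the single $\beta$ at the most recent step) are exactly enough to satisfy the three order conditions while pushing $\alpha_{k-1}/\beta_{k-1}$---which becomes the SSP coefficient---to its maximum. If a fully rigorous uniqueness statement were required to justify the article ``the'' in the theorem, I would appeal to the LP duality machinery of \cite{LMMpaper} that underlies Theorem \ref{thm:upperbound}; for the optimality claim itself, exhibiting one maximizer suffices.
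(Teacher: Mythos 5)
Your proof is correct and establishes everything except uniqueness, but it takes a genuinely different route from the paper's.

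You proceed by \emph{direct construction}: guess the non-zero pattern $\{\alpha_0,\alpha_{k-1},\beta_{k-1}\}$ from the fixed-step-size optimum, solve the three order conditions of \eqref{oc-vss}, verify positivity from $\Omega_{k-1}>1$, and observe that the resulting SSP coefficient $\alpha_{k-1}/\beta_{k-1}=(\Omega_k-2)/(\Omega_k-1)$ meets the upper bound of Theorem~\ref{thm:upperbound}. This is clean and entirely elementary once the bound is available, and your algebra checks out (your $\beta_{k-1}=\Omega_k/\Omega_{k-1}$ agrees with the displayed formula after expanding the parenthesis). The paper instead runs the full LP-duality machinery: it uses Lemma~\ref{lem:dual} to pin down the dual polynomial $q_n(x)=x(\Omega_k-x)$, deduces sufficiency of $\Omega_k>2$ from $q_n$ failing \eqref{dualconditions:b} at $j=k-1$ with $r=0$, and then applies the complementary-slackness statements~(i)--(iii) of Lemma~\ref{lem:dualoptimal} to \emph{derive} that the only possibly non-zero coefficients are $\delta_0$ and $\beta_{k-1}$ and that the binding inequality is $j=k-1$. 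What the paper's route buys is exactly what your proposal defers: a proof that the formula is the \emph{unique} optimizer and a principled derivation of the non-zero pattern rather than an ansatz. You correctly flag that, to justify the definite article ``the optimal formula,'' one would invoke that machinery; since the theorem statement does make a uniqueness assertion, that step is not optional, so a fully self-contained version of your argument should cite Lemma~\ref{lem:dualoptimal}(i)--(ii) to rule out other non-zero patterns. For the existence and the value of $\sspcoeff$, however, your construction-plus-bound argument is complete and arguably more transparent.
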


%Based on this theorem, we consider the order conditions
%\eqref{oc-vss-delta} up to $p=2$ and set all values to zero except for
%$r$, $\delta_0$, and $\beta_{k-1}$.  
%By using only 
%$\Omega_{k-1}> 0$ we see that the resulting system of
%three equations has a unique solution:
%\begin{subequations}\label{kstep2ndordermethodsformulae}
%\begin{align}
%r & = \frac{\Omega_{k-1} - 1}{\Omega_{k-1}}, \label{formula18a}\\
%\delta_0 & = \frac{1}{\Omega_{k-1}^2},\\
%\beta_{k-1} & = \frac{\Omega_{k-1}+1}{\Omega_{k-1}}.
%\end{align}
%\end{subequations}
%From this we obtain the optimal VSS formula
%Its SSP coefficient \eqref{formula18a} is positive if and only if 
%$\Omega_{k-1}>1$
%(being of course equivalent to \eqref{eq17} with $\Omega_{k}>2$).
%For any choice of step sizes we have $\delta_0$, $\beta_{k-1} >0$.
%Furthermore, 
%\begin{equation}
%r>0 \Longleftrightarrow\Omega_{k-1}>1.
%\end{equation}
\begin{rem}
If the step size is fixed, method \eqref{VSSSSPLMMk2} with $k\ge 3$ is equivalent to
the optimal $k$-step, second-order SSP method given in \cite[Section 8.2.1]{Gottlieb2011a}:
\begin{align} \label{sspk2}
u_n & = \frac{(k-1)^2-1}{(k-1)^2} \left( u_{n-1} + \frac{k-1}{k-2}\, h f(u_{n-1})\right) + 
\frac{1}{(k-1)^2}\, u_{n-k}
\end{align}
with SSP coefficient $\sspcoeff = (k-2)/(k-1)$.
\end{rem}

\subsection{Third-order formulae}\label{sec:3rdordermethods}

Compared to the family of second-order formulae above, the optimal third-order
formulae have a more complicated structure.  Although we will eventually focus
on two relatively simple 
third-order formulae (corresponding to $k=4$ and $k=5$), we present complete 
results 
in order to give a flavor of what may happen in the
search for optimal formulae.
The following Theorem \ref{thm:optimal3rdorder} characterizes optimal third-order linear multistep formulae and their SSP
coefficients, again for arbitrary step-size ratios $\omega_j$.
The theorem also provides an efficient way to find these optimal 
$k$-step formulae, since the sets of non-zero formula coefficients,
denoted by $\N\equiv\N(\omega,\delta,\beta)$, are
explicitly described. First we define $2k-2$ quantities and sets that will appear in Theorem \ref{thm:optimal3rdorder}.
\begin{enumerate}
\item[$\bullet$] For  $j=0:$ \[r_{0} := \frac{\Omega_k-3}{\Omega_k-1}, \quad 
\Ss_0:=\{\delta_0, \beta_0, \beta_{k-1}\}.\]
\item[$\bullet$] For $1 \leq j \leq k-2$:  \[r_{j} := \max\left(\frac{\Omega_k-\Omega_{j}-3}{\Omega_k-\Omega_{j}-1}, \frac{2}{\omega_j}+\frac{1}{\Omega_k-\Omega_{j-1}}\right),\]
 and either $\Ss_{j}:=\{\delta_j, \beta_j, \beta_{k-1}\}$  or $\Ss_{j}:=\{\delta_j, \beta_{j-1}, \beta_{j}\}$ or $\Ss_{j}:= \{\delta_j, \beta_{j-1}, \beta_j, \allowbreak \beta_{k-1}\}$, depending on whether the first expression is greater, or the second expression is greater, or the two expressions are equal in the above  
max(\ldots,\,\ldots).
\item[$\bullet$] For  $j=k-1$: 
\[r_{k-1} := \frac{2}{\omega_{k-1}} + \frac{1}{\Omega_k-\Omega_{k-2}},\quad 
\Ss_{k-1}:=\{\delta_{k-1}, \beta_{k-2}, \beta_{k-1}\}.\]
\item[$\bullet$] The $r_{k+j}$ quantities for $0\le j\le k-3$ are defined 
%in \eqref{rhokjdef}
below, and $\Ss_{k+j}:=\{\beta_{j}, \beta_{j+1}, \beta_{k-1}\}$. For any $0\le j\le k-3$ we set
\begin{equation}\label{Pkjdef}
P_{k+j}(x):=\Delta_j \Delta_{j+1} x^3- \left(\Delta_j \Delta_{j+1} +\Delta_j+\Delta_{j+1}\right) x^2+
2\left(\Delta _j+ \Delta _{j+1}+1\right) x-6,
\end{equation}
where 
\begin{equation}\label{Deltadef}
\Delta_{m}:=\Omega_k-\Omega_m.
\end{equation}
If
\begin{equation}\label{22cond}
\Delta_{j+1}^2-\left(\Delta_j+1\right) \Delta_{j+1}+3 \Delta_j>0 \quad \text{or}\quad \Delta _j<  5+2 \sqrt{6},
\end{equation}
then $P_{k+j}(\cdot)$ has a unique real root. We define
\begin{equation}\label{rhokjdef}
r_{k+j}:=
\begin{cases} 
\text{the real root of } P_{k+j}  & \text{ if } \eqref{22cond} \text{ holds }\\
+\infty & \text{ if } \eqref{22cond} \text{ does not hold.}
\end{cases}
\end{equation}
\end{enumerate}
%\begin{definition}\label{rkjdeflemma}
%For any $0\le j\le k-3$ we set
%\begin{equation}\label{Pkjdef}
%P_{k+j}(x):=\Delta_j \Delta_{j+1} x^3- \left(\Delta_j \Delta_{j+1} +\Delta_j+\Delta_{j+1}\right) x^2+
%2\left(\Delta _j+ \Delta _{j+1}+1\right) x-6,
%\end{equation}
%where  $\Delta_{m}:=\Omega_k-\Omega_m$.
%If
%\begin{equation}\label{22cond}
%\Delta_{j+1}^2-\left(\Delta_j+1\right) \Delta_{j+1}+3 \Delta_j>0 \quad \text{or}\quad \Delta _j<  5+2 \sqrt{6},
%\end{equation}
%then $P_{k+j}(\cdot)$ has a unique real root. We define
%\begin{equation}\label{rhokjdef}
%r_{k+j}:=
%\begin{cases} 
%\text{the real root of } P_{k+j}  & \text{ if } \eqref{22cond} \text{ holds }\\
%+\infty & \text{ if } \eqref{22cond} \text{ does not hold.}
%\end{cases}
%\end{equation}
%\end{definition}

\begin{thm}[Optimal third-order formulae with $k\ge 2$ steps] \label{thm:optimal3rdorder}
  Let time-step ratios $\omega_j$ be given.
  Then the inequality $\Omega_k > 3$ is necessary and sufficient for the existence of 
  a third-order, $k$-step
  explicit linear multistep formula with positive SSP coefficient.
For $\Omega_k > 3$, the optimal SSP coefficient is
  \begin{equation*}
    \sspcoeff(\omega,\delta,\beta) = \min\limits_{0 \leq j \leq 2k-3} r_{j},
  \end{equation*}
and the set of non-zero coefficients of an optimal SSP formula satisfies 
$\N\subseteq \Ss_\ell$, where the index $\ell\in\{0, 1, \ldots, 2k-3\}$
is determined by the relation $r_\ell=\min\limits_{0 \leq j \leq 2k-3} r_{j}$.
  If the index $\ell$ where the minimum is attained 
 is not unique and we have $\min\limits_{0 \leq j \leq 2k-3} r_{j}=r_{\ell_1}=\ldots=r_{\ell_m}$,
  then 
$\N\subseteq \Ss_{\ell_1}\cap \ldots\cap \Ss_{\ell_m}$.
\end{thm}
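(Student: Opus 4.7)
The plan is to recast the search for a third-order, $k$-step formula with largest SSP coefficient as a linear programming problem in the $2k$ variables $(\delta_0,\ldots,\delta_{k-1},\beta_0,\ldots,\beta_{k-1})$, with $r$ treated as a parameter, and to invoke the LP duality framework used in \cite{LMMpaper} for the fixed-step-size case. The system \eqref{oc-vss-delta} contributes $p+1=4$ linear equalities, and non-negativity defines a polytope $\mathcal{P}(r)$ whose non-emptiness is to be maximized over $r\ge 0$.

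First I would dispose of the existence claim. Necessity of $\Omega_k>3$ is immediate from Theorem \ref{thm:upperbound}, which forces $\Omega_k>p=3$ for positivity of $\sspcoeff$. Sufficiency falls out of the construction below: once any one of the candidate $r_j$ is shown to be realized by a formula with support in $\Ss_j$ and is positive under $\Omega_k>3$, existence is settled.

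For the main characterization I would apply strong LP duality to $\mathcal{P}(r)$. Standard basic-feasible-solution arguments show that at an extremal vertex at most $p+1=4$ of the coefficients are non-zero. I would then enumerate the candidate supports, pruning sharply by observing that the four order conditions together with $\delta,\beta\ge 0$ force rigid structural constraints on which small subsets can support a feasible primal solution; this should reduce the $\binom{2k}{4}$ a priori possibilities to exactly the $2k-2$ sets $\Ss_0,\ldots,\Ss_{2k-3}$ listed in the theorem. For each such support I would solve the resulting small linear system in $\Omega_j,\omega_j$: the supports containing a single $\delta_j$ yield the closed-form rationals for $r_0,\ldots,r_{k-1}$, while the pure-$\beta$ supports $\Ss_{k+j}$ lead to a polynomial equation in $r$ whose coefficients reproduce $P_{k+j}$ from \eqref{Pkjdef}, with $r_{k+j}$ being its unique real root when \eqref{22cond} holds (and $+\infty$ otherwise, signifying that no binding constraint arises from that support).

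The main obstacle is twofold. First, for $1\le j\le k-2$ the label $\Ss_j$ actually corresponds to two competing 3-element candidate subsets, and the $\max$ in the definition of $r_j$ encodes the tightest of the two non-negativity constraints that must simultaneously hold at the vertex; this requires careful sign analysis of the closed-form solutions, with the 4-element $\Ss_j$ case arising precisely in the tie situation. Second, the cubic $P_{k+j}$ has to be shown to admit a unique real root precisely when \eqref{22cond} holds (via its discriminant), and that root must then be positive and feasible. Once both points are settled, $\sspcoeff=\min_{0\le j\le 2k-3} r_j$ follows by exhibiting each $r_j$ as a dual certificate for upper-boundedness and by attaining the minimum at the primal vertex supported on the corresponding $\Ss_\ell$. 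The support inclusion $\N\subseteq \Ss_\ell$ is then just complementary slackness; in the case of ties one has $\N\subseteq\Ss_{\ell_1}\cap\cdots\cap \Ss_{\ell_m}$ because every active binding dual constraint must be respected by any optimal primal.
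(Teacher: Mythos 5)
Your overall framework---recast as a linear program in $(\delta,\beta)$ with $r$ as a parameter, apply LP duality, use complementary slackness to pin down the support of an optimal formula---is indeed the machinery the paper builds on (Lemmas \ref{lem:dual}, \ref{lem:farkas}, \ref{lem:dualoptimal}). However, the crucial step in your plan has a real gap: you claim that \emph{``the four order conditions together with $\delta,\beta\ge 0$ force rigid structural constraints on which small subsets can support a feasible primal solution,''} and that this alone prunes the $\binom{2k}{3}$ or $\binom{2k}{4}$ conceivable supports down to the $2k-2$ listed families $\Ss_0,\dots,\Ss_{2k-3}$. That is not the actual mechanism: for small $r$ the feasible polytope is fat and a great many supports admit non-negative solutions (e.g.\ supports with two or more $\delta$'s). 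The restriction to the $\Ss_j$ is forced by \emph{optimality}, not by primal feasibility, and the only way the paper (or anyone) extracts it is through the structure of the dual certificate. Concretely, at the optimum one has a dual polynomial $q_n$ of degree exactly $p=3$ that is non-negative on $[0,\Omega_k]$ and vanishes to order one at $\Omega_k$; by Properties 3--5 of Lemma \ref{lem:dual} the remaining quadratic factor is either $(x-\Omega_{j_0})^2$ for some $j_0\le k-1$, or an irreducible positive quadratic. In the first case, the binding rows of \eqref{dualconditions:b} are determined by the monotonicity of the logarithmic derivative $q_n'/q_n$ on each side of $\Omega_{j_0}$; in the second, they are pinned down by the sign pattern of the cubic $Q_3=q_n'+rq_n$, whose leading coefficient is $-r$ and which has exactly three real zeros interlacing the mesh. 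Those two sign analyses---not a primal support count---are what produce the $\Ss_j$ lists and the formulas $r_j$.

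Two secondary issues. First, you assert that vertices carry at most $p+1=4$ non-zeros; this is the correct count at \emph{fixed} $r$, but at the optimal $r$ the polytope degenerates to a face and the count drops to $p=3$ generically (with $4$ only in the tie subcase $\max(\cdot,\cdot)$; this tie is within a single $j$, not between distinct $j$'s, so your phrase ``with the 4-element $\Ss_j$ case arising precisely in the tie situation'' is ambiguous). Second, the uniqueness of the real root of the cubic $P_{k+j}$ is genuinely more than ``via its discriminant'': the paper's Lemma \ref{lemma5} (Step 2) requires eliminating $\Delta_j$ via $P_{k+j}(r)=0$ and then ruling out a system of bivariate polynomial inequalities, which in turn links the discriminant condition $a^2-4b<0$ of the dual factorization to \eqref{22cond}. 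You would also still need the companion argument (the paper's Step 3) that each $r_j$ separately furnishes an admissible dual polynomial, giving $\sspcoeff\le r_j$ for every $j$---your text mentions ``dual certificate for upper-boundedness'' but does not indicate where the family of certificates comes from. Bringing in the dual polynomial explicitly would both repair the pruning step and supply these certificates at the same time.
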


\begin{rem}
Let us highlight some differences between the fixed-step-size and the VSS cases 
concerning the sets of non-zero formula coefficients.\\
\indent 1. The pattern of non-zero coefficients for optimal third- or higher-order formulae
can be different from that of their fixed-step-size counterparts
 (this phenomenon does not occur in the class of optimal second-order formulae). A simple example is
provided by the optimal 3rd-order, 5-step formula with
  \[(\Omega_0, \ldots, \Omega_5):=\left( 0, 1, \frac{7}{3}, \frac{11}{3}, 5, 6 \right),\] 
where 
$\N=\{\beta_0, \beta_1, \beta_{4}\}$ 
(the coefficient pattern being similar to the case of the optimal fixed-step-size 3rd-order, 6-step method).\\
\indent 2.  If the index $\ell$ with $\sspcoeff(\omega,\delta,\beta)=r_\ell$ is not unique,
  then the optimal formula has less than 3 non-zero coefficients in the general case.\\
\indent 3.  If the index $\ell$ with $\sspcoeff(\omega,\delta,\beta)=r_\ell$ satisfies $1\le \ell \le k-2$ and the
  expressions in the $\max(\ldots)$ are equal, then the optimal formula is generally
  not unique and has more than 3 non-zero coefficients. For example, with
  $\omega_{j} := 5$  for $1\le j\le k-2$, $\omega_{k-1} := 4$ and $\omega_k:=1$,
we have a one-parameter family of optimal methods, and
$\N\subseteq \{\delta_{k-2}, \beta_{k-3}, \beta_{k-2}, \beta_{k-1}\}$ with
  \[\delta_{k-2}:=\frac{1}{7}(2-5\beta_{k-2}),\ 
   \beta_{k-3}:=\frac{2}{63}(16\beta_{k-2}-5) \text{ \ and\  }
   \beta_{k-1}:=\frac{5}{63}(20-\beta_{k-2})\]
for any ${5}/{16} \le \beta_{k-2}\le {2}/{5}$.
  However, for any fixed $\omega$ it can be shown that there is an optimal formula 
  that has at most $p$ non-zero coefficients just as in the fixed-step-size case \cite{lenferink1989}.
\end{rem}

% Statements for implicit methods removed as no implicit methods have been considered so far
% The $k$-step 1st-order implicit variable step-size LMM with optimal SSP radius has SSP coefficient $\mathcal{C}=\infty$, and the optimal
% method is the implicit Euler method.
%
% The $k$-step 2nd-order implicit variable step-size LMM with optimal SSP radius has SSP coefficient $\mathcal{C}=2$, and
% the optimal method has the same structure of non-zero $\beta_i,\delta_i$ coefficients as in the fixed step-size case.
%
% A $k$-step 3rd-order implicit variable step-size LMM can have a positive SSP coefficient iff $h_{n-1}+h_{n-2}+h_{n-3}>2h_{n}$.
% The method with the optimal radius of absolute monotonicity has SSP coefficient $\mathcal{C}=\frac{2\Omega_k-3}{\Omega_k-1}$ and
% has the same structure of non-zero $\beta_i,\delta_i$ coefficients as in the fixed step-size case.

The optimal fixed-step-size SSP method of order 3 and $k=4$ or $k=5$ steps has non-zero
coefficients $\{\delta_0, \beta_0, \beta_{k-1}\}$ \cite[Section 8.2.2]{Gottlieb2011a}.
In the rest of this section we consider formulae with 4 or 5 steps that generalize
the corresponding fixed-step-size methods.
A continuity argument shows that the set of non-zero coefficients 
is preserved if the step sizes are perturbed by a small enough amount.
Hence we solve the VSS order conditions \eqref{oc-vss-delta} 
with $p=3$ for $r$, $\delta_0$, $\beta_0$ and $\beta_{k-1}$:
by using $\Omega_{k-1}> 0$,
we obtain that the unique solution is
\begin{subequations}\label{kstep3rdordermethodformula}
\begin{align}
r & = \frac{\Omega_{k-1} - 2}{\Omega_{k-1}}, \label{r21aexpression}\\
    \delta_0 & = \frac{4(\Omega_{k-1}+1) - \Omega_{k-1}^2}{\Omega_{k-1}^3}, &
    \beta_{0} & = \frac{\Omega_{k-1}+1}{\Omega_{k-1}^2},&
\beta_{k-1} & = \frac{(\Omega_{k-1}+1)^2}{\Omega_{k-1}^2}.
\end{align}
\end{subequations}
The resulting VSS  formula reads
\begin{align}\label{VSSSSPLMMk3}
u_n   = & \frac{(\Omega_{k-1}+1)^2(\Omega_{k-1}-2)}{\Omega_{k-1}^3}\, u_{n-1} + \frac{(\Omega_{k-1}+1)^2}{\Omega_{k-1}^2}\, h_n f(u_{n-1}) + \nonumber\\
 & \frac{3\Omega_{k-1}+2}{\Omega_{k-1}^3}\, u_{n-k} + \frac{\Omega_{k-1}+1}{\Omega_{k-1}^2}\, h_n f(u_{n-k}).
\end{align}
\begin{prop}\label{prop1}  For 
\begin{equation}\label{Omegak-1range}
2 < \Omega_{k-1} \le 2(1+\sqrt{2})\approx 4.828,
\end{equation}
the SSP coefficient of \eqref{VSSSSPLMMk3} is optimal, and is equal to
\eqref{r21aexpression}.
%${(\Omega_{k-1} - 2)}/{\Omega_{k-1}}$.
\end{prop}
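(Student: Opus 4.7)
The plan is to combine two ingredients: the sharp upper bound from Theorem \ref{thm:upperbound} and a direct non-negativity check on the coefficients of \eqref{VSSSSPLMMk3}. Under this approach I would not need Theorem \ref{thm:optimal3rdorder} nor the auxiliary quantities $r_j$ defined just before it.

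First, I would note that \eqref{VSSSSPLMMk3} is, by construction, the unique solution of the third-order VSS order conditions \eqref{oc-vss-delta} obtained by treating $r,\delta_0,\beta_0,\beta_{k-1}$ as the four unknowns and setting all other $\delta_j,\beta_j$ to zero, as recorded in \eqref{kstep3rdordermethodformula}. Its consistency of order three is therefore automatic. I would then compute the SSP coefficient of \eqref{VSSSSPLMMk3} directly from \eqref{sspcoeffdef}, which here amounts to $\min(\alpha_0/\beta_0,\alpha_{k-1}/\beta_{k-1})$. The ratio $\alpha_{k-1}/\beta_{k-1}$ collapses to $(\Omega_{k-1}-2)/\Omega_{k-1}$, exactly \eqref{r21aexpression}, and is positive iff $\Omega_{k-1}>2$; the ratio $\alpha_0/\beta_0$ simplifies to $(3\Omega_{k-1}+2)/[\Omega_{k-1}(\Omega_{k-1}+1)]$, and the inequality $\alpha_0/\beta_0\ge\alpha_{k-1}/\beta_{k-1}$ rearranges, after clearing denominators, to $\Omega_{k-1}^2-4\Omega_{k-1}-4\le 0$, whose positive root is precisely $2(1+\sqrt{2})$. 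Hence throughout the range \eqref{Omegak-1range} all four nonzero coefficients $\alpha_0,\alpha_{k-1},\beta_0,\beta_{k-1}$ are non-negative and the SSP coefficient of \eqref{VSSSSPLMMk3} equals \eqref{r21aexpression}.

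Optimality is then immediate from Theorem \ref{thm:upperbound}: with $p=3$ it gives the universal upper bound $(\Omega_k-3)/(\Omega_k-1)$, and using $\Omega_k=\Omega_{k-1}+1$ this is exactly $(\Omega_{k-1}-2)/\Omega_{k-1}$. Since \eqref{VSSSSPLMMk3} attains this bound throughout \eqref{Omegak-1range}, it cannot be improved upon by any other third-order $k$-step formula. The one piece of algebra requiring a little care is the isolation of $2(1+\sqrt{2})$ as the positive root of $\Omega_{k-1}^2-4\Omega_{k-1}-4$; beyond that the argument is essentially bookkeeping, so I do not anticipate any genuinely delicate step.
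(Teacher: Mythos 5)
Your proof is correct and takes essentially the same approach as the paper's: compute the SSP coefficient of \eqref{VSSSSPLMMk3} directly from \eqref{sspcoeffdef} as $\min(\alpha_0/\beta_0,\alpha_{k-1}/\beta_{k-1})$, determine via the quadratic $\Omega_{k-1}^2-4\Omega_{k-1}-4\le 0$ that the first ratio dominates throughout \eqref{Omegak-1range}, and then invoke the upper bound of Theorem \ref{thm:upperbound} with $\Omega_k=\Omega_{k-1}+1$ to conclude optimality. The only cosmetic difference is that the paper phrases the two ratios in the $(\delta,\beta)$ variables, namely $r\beta_{k-1}/\beta_{k-1}$ and $(r\beta_0+\delta_0)/\beta_0$, which are identical to your $\alpha_{k-1}/\beta_{k-1}$ and $\alpha_0/\beta_0$; you also spell out the algebra isolating $2(1+\sqrt{2})$ that the paper leaves implicit.
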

\begin{proof}
By using \eqref{kstep3rdordermethodformula}, formula \eqref{VSSSSPLMMk3} takes the form
\[
u_n = r \beta_{k-1} u_{n-1} + \beta_{k-1} h_n\,f(u_{n-1})
+ (r \beta_0+ \delta_0) u_{n-k} + \beta_0 h_n\,f(u_{n-k}).
\]
By definition (see \cite[Chapter 8]{Gottlieb2011a}), its SSP coefficient is given by
\begin{align*}
    \sspcoeff(\omega,\delta,\beta) = \min\left(\frac{r \beta_{k-1}}{\beta_{k-1}},\frac{r \beta_{0}+\delta_0}{\beta_{0}}\right)\equiv \min\left(\frac{\Omega_{k-1} - 2}{\Omega_{k-1}}, 
     \frac{3\Omega_{k-1} + 2}{\Omega_{k-1}(\Omega_{k-1}+1)} \right),
\end{align*}
from which we see that
\begin{align}\label{formula313}
    \sspcoeff(\omega,\delta,\beta) & = \begin{cases} 
    \frac{\Omega_{k-1} - 2}{\Omega_{k-1}} & \text{ for\ \  } 2 \le \Omega_{k-1} \le 2(1+\sqrt{2}) \\
    \frac{3\Omega_{k-1} + 2}{\Omega_{k-1}(\Omega_{k-1}+1)} & \text{ for\  \ } \Omega_{k-1} > 2(1+\sqrt{2}).
\end{cases}
\end{align}
%all formula coefficients \eqref{kstep3rdordermethodformula} are non-negative under condition
%\eqref{Omegak-1range}, hence the SSP coefficient of \eqref{VSSSSPLMMk3}
%is at least \eqref{r21aexpression}. 
But Theorem \ref{thm:upperbound} says that the 
SSP coefficient of any multistep formula with $p=3$ can be at most $\frac{\Omega_{k}-3}{\Omega_{k}-1}\equiv 
 \frac{\Omega_{k-1} - 2}{\Omega_{k-1}}$, so for $2 < \Omega_{k-1} \le 2(1+\sqrt{2})$ the 
 SSP coefficient of \eqref{VSSSSPLMMk3} is optimal. 
\end{proof}

The natural requirement \eqref{Omegak-1range} also justifies our choice for $k$:
  in the fixed-step-size case we have $\Omega_{k-1}=k-1$, and 
$2 < k-1 \le 2(1+\sqrt{2})$ holds if and only if $k=4$ or $k=5$.

\begin{rem} For $k\in\{4, 5\}$, we have the following strengthening of Proposition \ref{prop1}: the VSS formula \eqref{VSSSSPLMMk3}
is optimal if and only if \eqref{Omegak-1range} holds. 
To see this, it is enough to show that \eqref{VSSSSPLMMk3} is not optimal for
\begin{equation}\label{21+sqrt2}
\Omega_{k-1} > 2(1+\sqrt{2}).
\end{equation}
Indeed, by fixing any $\Omega_{k-1}> 2(1+\sqrt{2})$, one checks by direct computation that 
\begin{equation}\label{3omega+2}
\frac{3\Omega_{k-1} + 2}{\Omega_{k-1}(\Omega_{k-1}+1)} <  r_{j} \quad (j=0, 1, \ldots, 2k-3).
\end{equation}
But the SSP coefficient of \eqref{VSSSSPLMMk3} is given by the left-hand side of \eqref{3omega+2}
according to \eqref{formula313}, and the optimal SSP coefficient for third-order formulae is 
$\min\limits_{0 \leq j \leq 2k-3} r_{j}$ according to Theorem \ref{thm:optimal3rdorder}. Hence the SSP coefficient of \eqref{VSSSSPLMMk3} is not optimal when \eqref{21+sqrt2} holds.
\end{rem}
\begin{rem} One could develop optimal third-order explicit SSP formulae for $k>5$ as well. 
However, their structure, as indicated by Theorem \ref{thm:optimal3rdorder}, would be
more complicated, and the analysis performed in Section \ref{subsectionk3conv} would 
become increasingly involved.
\end{rem}

\subsection{Higher-order formulae}\label{sec:higherordermethods}

The last theorem in this section 
reveals that arbitrarily high-order VSS SSP
explicit linear multistep formulae exist,
though they may require a large number of steps.
\begin{thm}\label{sec3Thm5}
  Let $K_1, K_2\ge 1$ be arbitrary and let $p$ and $k > p^3K_1K_2/2$ be 
  arbitrary positive integers.
  Suppose that $\omega_{j}$ are given and that 
  \begin{equation}\label{K1K2omegajineq}
  1/K_1 \leq \omega_{j} \leq K_2\quad \text{ for all } 1\leq j\leq k.
  \end{equation}
  Then there exists a $k$-step formula of order $p$ with $\sspcoeff(\omega,\delta,\beta) > 0$.
\end{thm}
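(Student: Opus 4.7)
The plan is to deploy the LP-duality framework underlying the companion paper \cite{LMMpaper}, adapted to the variable step-size setting. By \eqref{sspcoeffdef} and \eqref{oc-vss-delta}, establishing $\sspcoeff(\omega,\delta,\beta)>0$ for some formula reduces to showing that, for some $r>0$, the order conditions \eqref{oc-vss-delta} admit a nonnegative solution $(\delta,\beta)$.

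First, I would view the order conditions together with $\delta,\beta\ge 0$, for fixed $r$, as a linear programming feasibility problem and invoke Farkas' lemma. This yields the following dual certificate: a $k$-step formula of order $p$ with SSP coefficient at least $r$ exists if and only if every polynomial $q$ of degree at most $p$ satisfying
\[
q(\Omega_j)\ge 0 \quad\text{and}\quad rq(\Omega_j)+q'(\Omega_j)\ge 0 \qquad(0\le j\le k-1)
\]
also satisfies $q(\Omega_k)\ge 0$. The problem thus reduces to producing a single $r>0$ for which this implication holds whenever the hypotheses of the theorem are in force.

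The central analytic step is verifying this implication. The bound \eqref{K1K2omegajineq} makes the nodes $\Omega_0<\dots<\Omega_k$ quasi-uniform, with total length $\Omega_k\ge k/K_1$ and consecutive gaps at most $K_2$, so the grid contains on average at least $k/(K_1K_2)$ nodes per unit length. The condition $rq+q'\ge 0$ at each $\Omega_j$ forces $e^{rx}q(x)$ to be ``almost increasing'' at the nodes, while $q(\Omega_j)\ge 0$ fixes its sign there. Since $q$ has degree at most $p$ it can admit at most $p$ sign changes; if the mesh is dense enough that every sufficiently long subinterval of $[0,\Omega_k]$ contains many nodes, then no sign change of $q$ can occur between $\Omega_0$ and $\Omega_k$ without violating one of the pointwise inequalities, so $q(\Omega_k)\ge 0$. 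Making this quantitative requires Taylor-expanding $q$ on each subinterval $[\Omega_j,\Omega_{j+1}]$ and bounding the remainder via a Markov-type inequality for the derivatives of the degree-$p$ polynomial $q$; the detailed bookkeeping with the explicit constants furnished by \eqref{K1K2omegajineq} is precisely what produces the threshold $k>p^3K_1K_2/2$.

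The main obstacle is this quantitative polynomial inequality: tracking constants through the Markov-type estimates and through the node-to-node propagation of sign information, while simultaneously choosing $r>0$ small enough for the implication to hold but positive, is delicate and is what ties the exponent on $p$ and the product $K_1K_2$ to the stated bound. Once the implication is established for some $r>0$, Farkas' lemma immediately delivers the desired formula.
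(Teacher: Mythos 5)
Your high-level framework is the same as the paper's --- LP duality (via the lemma built on Farkas'), a polynomial dual certificate, and a Markov-type inequality combined with the quasi-uniformity \eqref{K1K2omegajineq} to produce the threshold $p^3K_1K_2/2$. However, the mechanism you sketch for the ``central analytic step'' does not match how the proof actually closes, and as written it has a genuine gap.

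Two specific issues. First, the paper does not search for a positive $r$ making the dual implication hold; it argues by contradiction at $r=0$. Assuming $\sspcoeff=0$ for all $k$-step, order-$p$ formulae, Lemma~\ref{lem:dual} (with $r=0$) produces a degree-$p$ polynomial $q_n$ with $q_n\ge 0$ on $[0,\Omega_k]$, $q_n'(\Omega_j)\ge 0$ for $0\le j\le k-1$, and $q_n(\Omega_k)=0$. The contradiction is then derived directly from $k>p^3K_1K_2/2$, without ever exhibiting an $r>0$. Second, your sketch of the quantitative step --- tracking sign changes of $q$ by Taylor-expanding on each cell $[\Omega_j,\Omega_{j+1}]$ and propagating sign information --- would not straightforwardly deliver the cubic-in-$p$ bound and misplaces where the degree count enters. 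The paper's actual argument is cleaner and different in two respects: (i) a single application of the Markov brothers' inequality to the shifted/rescaled $q_n$ on the whole interval gives $b\,\Omega_k/2\le p^2 A/2$ with $A:=\max_{[0,\Omega_k]}q_n$ and $b:=\max_{[0,\Omega_k]}|q_n'|$, so $bk/K_1\le p^2A$; and (ii) the relevant sign counting is done on the \emph{derivative} $q_n'$, not on $q_n$: since $\deg q_n'\le p-1$ and $q_n'(\Omega_j)\ge 0$ at the nodes, the derivative can be negative on at most $p/2$ of the cells $[\Omega_j,\Omega_{j+1}]$, so Newton--Leibniz at the vanishing endpoint gives $A\le\int_0^{\Omega_k}\max(0,-q_n')\le \tfrac{p}{2}K_2\,b$. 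Combining the two inequalities yields $k\le p^3K_1K_2/2$, the contradiction. Your proposal identifies the right ingredients but does not assemble them into an argument; in particular, it leaves unclear why exactly $p^3$ (rather than some other power) and the product $K_1K_2$ emerge, which is precisely the content of the two displayed estimates above.
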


%The optimal 3-step, 2nd-order SSP LMM is
%$$
%u_n = \frac{3}{4} \left( u_{n-1} + 2h f(u_{n-1})\right) + \frac{1}{4} u_{n-3}.
%$$
%Here we extend this method to handle variable step sizes.
%The non-zero coefficients are $\delta_0, \beta_2$.  The equations
%for order two read
%\begin{subequations}
%\begin{align}
%\delta_0 + r \beta_2 & = 1 \\
%r\Omega_2 \beta_2 + \beta_2 & = \Omega_3 \\
%r \Omega_2^2 \beta_2 + 2 \Omega_2 \beta_2 & = \Omega_3^2.
%\end{align}
%\end{subequations}
%The unique solution is 
%\begin{subequations}
%\begin{align}
%r & = \frac{\Omega_2 -1}{\Omega_2} \\
%\beta_2 & = \frac{\Omega_2 + 1}{\Omega_2} \\
%\delta_0 & = \frac{1}{\Omega_2^2}.
%\end{align}
%\end{subequations}
%For any choice of step sizes, we have $\delta_0,\beta_2 >0$.
%However, if $\omega_1 + \omega_2 < 1$, then $r<0$.  This means
%that the step size cannot grow too quickly; we must choose 
%$h_n < h_{n-2} + h_{n-1}$ in order to have a positive step
%size restriction.

\section{Step-size selection and asymptotic behavior of the step sizes\label{sec:stepsize}}
To fully specify a method, we need not only a set of multistep formulae but also
a prescription for the step size.
When using a one-step SSP method to integrate a hyperbolic PDE, usually one chooses the step 
size $h_n:=\gamma\, \sspcoeff\, \hFE(u_{n-1})$, where $\gamma$ is
a safety factor slightly less than unity.
For SSP multistep methods, the choice of step size is
more complicated.  First, multiple previous steps must
be taken into account when determining an appropriate $\hFE$,
as already noted.  But more significantly, the SSP coefficient $\sspcoeff_n$
depends on the method coefficients, while the method 
coefficients depend on the choice of $h_n$.  These coupled
relations result in a step-size restriction that is a nonlinear 
function of recent step sizes.  In this section we propose a greedy step-size selection algorithm
and investigate the dynamics of the resulting step-size recursion for the formulae 
derived in the previous section.

Besides the step-size algorithms themselves, our main result will be that
the step size remains bounded away from zero, so the computation is guaranteed
to terminate.  Because the step-size sequence is given by a recursion involving
$\hFE$, we will at times require assumptions on $\hFE$:
\begin{align}\label{murestriction}
    \text{For all $n$ we have } \mu^- \le \hFE(u_n) \le \mu^+ \text{ for some } \mu^\pm \in (0,\infty).
\end{align}
\begin{align}\label{hferestriction}
    \text{For all $n$ we have }
        \rhoFE\le\frac{\hFE(u_n)}{\hFE(u_{n+1})}\le \frac{1}{\rhoFE}
    \text{ for some prescribed value } \rhoFE\in (0,1].
\end{align}
Assumption \eqref{murestriction} states that the forward Euler permissible step size
remains bounded and is also bounded away from zero.  For stable hyperbolic PDE discretizations,
this is very reasonable since it means that the maximum wave speed remains finite and non-zero.
Assumption \eqref{hferestriction} states that the forward Euler step size changes little
over a single numerical time step.  Typically, this is reasonable since it is a
necessary condition for the numerical solution to be accurate.  It can easily be checked
{\em a posteriori}.

\subsection{Second-order methods}\label{subsectionk2conv}
Let us first analyze the three-step, second-order method in detail.
%(since we will use this method in Section~\ref{sec:examples}), 
%but formulate our results in Theorem~\ref{hnconvergencetheorem}
%below for any $k\ge 3$.

%The optimal fixed-step-size 3-step 2nd-order SSP LMM is
%\[
%u_n = \frac{3}{4} \left( u_{n-1} + 2h f(u_{n-1})\right) + \frac{1}{4} u_{n-3},
%\]
Set $k=3$ in the second-order formula \eqref{VSSSSPLMMk2},
%gives 
%\[
%u_n = \frac{\Omega_{2,n}^2-1}{\Omega_{2,n}^2} \left( u_{n-1} + \frac{\Omega_{2,n}}%{\Omega_{2,n}-1}\, h_n \,f(u_{n-1})\right) + \frac{1}{\Omega_{2,n}^2}\, u_{n-3}.
%\]
%We refer to this method as \textbf{SSPMSV32}.
%For any choice of step sizes, we have $\delta_{0,n}, \beta_{2,n}>0$.
%First we collect some restrictions on the step-size sequence. 
and suppose that $h_j>0$ has already been defined for $1\le j\le n-1$ with some $n\ge 3$.
%Defining
%\begin{equation}\label{mundef}
%\mu_n:=\min_{0 \le j \le 2} \hFE(u_{n-3+j}),% \text{for } n\ge 3,
%\end{equation}
The SSP step-size restriction \eqref{ssp-step-size}
% becomes
%\begin{align} \label{ssr}
%    h_n \le \sspcoeff_n  \mu_n.
%\end{align}
is implicit, since $\sspcoeff_n$ depends on $h_n$.
By \eqref{eq17} we have
%\begin{equation}\label{hncnmun}
\[
\sspcoeff_n = \frac{\omega_{1,n} + \omega_{2,n} - 1}{\omega_{1,n} + \omega_{2,n}} \equiv \frac{h_{n-2} + h_{n-1} - h_n}{h_{n-2} + h_{n-1}}.
%    & = \frac{\omega_{1,n} + \omega_{2,n} - 1}{\omega_{1,n} + \omega_{2,n}} \cdot\mu_n \\
%   & = \frac{h_{n-2} + h_{n-1} - h_n}{h_{n-2} + h_{n-1}} \cdot \mu_n
%\end{align*}
%\end{subequations}
\]
%\end{equation}
Solving for $h_n$ in 
%\eqref{ssr} 
\eqref{ssp-step-size} gives
%\begin{equation}\label{ineq24}
\[
h_n \le \frac{h_{n-2}+h_{n-1}}{h_{n-2}+h_{n-1} + \mu_n} \cdot\mu_n.
\]
%\end{equation}
It is natural to take the largest allowed step size, i.e., to define
\begin{equation}\label{greedydef}
h_n:=\frac{h_{n-2}+h_{n-1}}{h_{n-2}+h_{n-1} + \mu_n}\cdot\mu_n.
\end{equation}
For the general $k$-step, second-order formula \eqref{VSSSSPLMMk2}, the same analysis
leads to the following choice of step size, which guarantees monotonicity:
\begin{align} \label{general-greedy}
h_n:=\frac{\sum_{j=1}^{k-1} h_{n-j}}{\left(\sum_{j=1}^{k-1} h_{n-j} \right)+\mu_n}\cdot\mu_n.
\end{align}
Note that this definition automatically ensures $\Omega_{k-1, n}>1$, and hence $\sspcoeff_n>0$
for any $\mu_n>0$.

\subsubsection{Asymptotic behavior of the step size}
Since \eqref{general-greedy} is a nonlinear recurrence, one might wonder if the
step size could be driven to zero, preventing termination of the integration process.
The following theorem shows that, under some natural assumptions, this cannot happen.
\begin{thm}\label{hnconvergencetheorem}
Consider the solution of \eqref{ivp} by the second-order formula \eqref{VSSSSPLMMk2} with some $k\ge 3$.
Let the initial $k-1$ step sizes be
positive and let the subsequent step sizes $h_n$ be chosen according to \eqref{general-greedy}.
Assume that \eqref{murestriction} holds with some constants $\mu^\pm$.
Then the step-size sequence $h_n$ satisfies
\begin{equation}\label{2ndorderinfsup}
\frac{k-2}{k-1}\,\mu^-\le \liminf_{n\to+\infty} h_n \le \limsup_{n\to+\infty} h_n \le \frac{k-2}{k-1}\,\mu^+.
\end{equation}
%In particular, no subsequence of $h_n$ can converge to $0$. 
As a special case, %if $\mu_n\equiv \mu^\pm\equiv \hFE>0$ for all $n\ge k$, that is, 
if $\hFE(u_{n})$ is constant, then 
\[
h_n \to \frac{k-2}{k-1}\,\hFE\quad (n\to +\infty).
\]
\end{thm}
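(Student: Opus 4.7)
The plan is to rewrite \eqref{general-greedy} in a form that exposes its monotone structure and then to control a sliding-window minimum of the step sizes as a Lyapunov-type quantity. Setting $S_n := \sum_{j=1}^{k-1}h_{n-j}$, the recurrence becomes compact: $h_n = f(S_n,\mu_n)$ with $f(S,\mu):=S\mu/(S+\mu)$, equivalently $1/h_n = 1/S_n + 1/\mu_n$. The map $f$ is strictly increasing in each argument on $(0,\infty)^2$ (since $\partial_S f = \mu^2/(S+\mu)^2$ and $\partial_\mu f = S^2/(S+\mu)^2$) and satisfies $f(S,\mu)<\min(S,\mu)$; together with $h_1,\ldots,h_{k-1}>0$ this gives $0<h_n\le \mu^+$ for every $n\ge k$ by an easy induction. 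I would then introduce the one-dimensional auxiliary maps $f_\pm(x):=(k-1)x\mu^\pm/((k-1)x+\mu^\pm)$, which have unique positive fixed points $x_\pm^*:=\tfrac{k-2}{k-1}\mu^\pm$, with $f_\pm(x)>x$ on $(0,x_\pm^*)$ and $f_\pm(x)<x$ on $(x_\pm^*,\infty)$.

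The main obstacle is ruling out $\liminf h_n = 0$; once positivity of the liminf is secured, both sharp bounds follow by a routine squeezing. The key device is the sliding window $b_n := \min_{1\le j\le k-1} h_{n-j}$ (well-defined for $n\ge k$). From $S_n\ge (k-1)b_n$, $\mu_n\ge \mu^-$, and the monotonicity of $f$, we have $h_n\ge f_-(b_n)$. Since $b_{n+1}$ is obtained from $b_n$ by dropping $h_{n-k+1}$ and adjoining $h_n$, we also have $b_{n+1}\ge \min(b_n,h_n)\ge \min(b_n,f_-(b_n))$. A short case analysis on whether $b_n$ lies above or below $x_-^*$ then yields $b_{n+1}\ge \min(b_n,x_-^*)$: when $b_n\ge x_-^*$ the minimum equals $f_-(b_n)$ which is in turn $\ge x_-^*$, and when $b_n<x_-^*$ the minimum equals $b_n$. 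Induction gives $b_n\ge c_0 := \min(b_k,x_-^*)>0$ for every $n\ge k$, whence $h_n\ge f_-(c_0)>0$, and in particular $H^- := \liminf_{n\to\infty}h_n>0$.

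With $H^->0$ in hand the remaining squeeze is routine. Writing $H^+ := \limsup_{n\to\infty}h_n \le \mu^+$ and fixing $\epsilon>0$, eventually $h_n\le H^++\epsilon$, so $S_n\le (k-1)(H^++\epsilon)$; monotonicity of $f$ together with $\mu_n\le \mu^+$ then gives $h_n\le f_+(H^++\epsilon)$ for all large $n$. Passing to $\limsup$ and letting $\epsilon\downarrow 0$ yields $H^+\le f_+(H^+)$, which (using $H^+>0$) forces $H^+\le x_+^* = \tfrac{k-2}{k-1}\mu^+$ after a single algebraic manipulation. Symmetrically, $h_n\ge f_-(H^--\epsilon)$ leads to $H^-\ge f_-(H^-)$, and $H^->0$ then gives $H^-\ge \tfrac{k-2}{k-1}\mu^-$. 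In the special case $\hFE(u_n)\equiv \hFE$ we have $\mu^-=\mu^+=\hFE$, so the two bounds collapse to $\lim h_n = \tfrac{k-2}{k-1}\hFE$, completing the proof.
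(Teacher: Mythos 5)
Your proof is correct, and it takes a genuinely different route from the paper's. The paper constructs auxiliary sequences $h_n^\pm$ by running the same $(k-1)$-term recurrence with $\mu_n$ frozen at the constants $\mu^\pm$ and the same starting values, proves by monotonicity-induction the pointwise sandwich $h_n^-\le h_n\le h_n^+$, and then invokes a global attractivity theorem for monotone rational recursions (Theorem \ref{attractivitytheorem}, which rests on Lemma \ref{ThA01Lemma} cited from Kulenovi\'c--Ladas) to show $h_n^\pm\to\frac{k-2}{k-1}\mu^\pm$. You avoid the attractivity theorem entirely: you first secure $\liminf h_n>0$ with a Lyapunov-type sliding-window minimum $b_n$, using $h_n\ge f_-(b_n)$ and the case split on whether $b_n$ is above or below the fixed point $x_-^*$ to get $b_{n+1}\ge\min(b_n,x_-^*)$, and then run a standard asymptotic squeeze ($H^\pm$ versus the one-dimensional fixed-point maps $f_\pm$) to conclude. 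Your argument is more elementary and self-contained (no appeal to an external difference-equations lemma), at the cost of a separate step for positivity of the liminf; the paper's route is slightly longer on machinery but gets reused verbatim in the third-order analogue (Theorem \ref{hnconvergencetheorem3rdorder}), so the shared attractivity lemma amortizes across both proofs. Both are valid; the computations of the fixed points, the monotonicity of $f$, and the direction of the inequalities $f_\pm(x)\gtrless x$ in your squeeze all check out.
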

\begin{rem}
The asymptotic step size $\frac{k-2}{k-1}\,\hFE$ given above is precisely
the allowable step size for the fixed-step-size SSP method of $k$ steps.
\end{rem}

\smallskip
The proof of Theorem \ref{hnconvergencetheorem} is given in Section \ref{sec:proofs}.
\smallskip

\begin{rem}\label{greedyoptimalremark}
Our greedy step-size selection \eqref{general-greedy} for second-order methods is 
optimal in the following sense. 
Let us assume that there is another step-size sequence, say $h_n^-$, with the following
properties:\\
\indent $\bullet$ the corresponding starting values are equal, that is, $h_j^-=h_j>0$ for $j=1, 2, \ldots, k-1$;\\
\indent $\bullet$ the $\mu_n$ quantities in \eqref{generalmundef} 
corresponding to the sequences $h_n$ and $h_n^-$ are all equal to a fixed common constant $\mu>0$;\\
\indent $\bullet$ $h_n^-$ satisfies \eqref{general-greedy} with inequality, that is,
\[
h_n^- \le \frac{\sum_{j=1}^{k-1} h_{n-j}^-}{\left(\sum_{j=1}^{k-1} h_{n-j}^-\right) + \mu} \cdot\mu.
\]
Then---as a straightforward modification of the proof of \eqref{hnsandwich} in 
Section \ref{proofofTh2section} shows---we have $h_n^-\le h_n$ for all $n\ge 1$. 
\end{rem}

\subsection{Third-order methods}\label{subsectionk3conv}
In this section we give our step-size selection algorithm for the 3rd-order $4$-step and $5$-step 
SSP formulae \eqref{VSSSSPLMMk3} by following the same approach as in Section \ref{subsectionk2conv}.

By using the optimal SSP coefficient 
$\sspcoeff_n=\frac{\Omega_{k-1, n} - 2}{\Omega_{k-1, n}}$ 
given in Proposition \ref{prop1} we see 
%after some elementary manipulations 
that $h_n=\sspcoeff_n  \mu_n$ 
%equal to the maximal SSP step size 
%$\sspcoeff_n  \mu_n$ 
in \eqref{ssp-step-size} if and only if 
%\begin{align} \label{general-greedy-3}
%    h_n=\begin{cases} 
%        \frac{\sum_{j=1}^{k-1} h_{n-j}}{\left(\sum_{j=1}^{k-1} h_{n-j} \right)+2\mu_n}\cdot\mu_n & \text{if } \sum_{j=1}^{k-1} h_{n-j} \le \sqrt{8}\, \mu_n, \\ %2 \le \Omega_{k-1} \le 2(1+\sqrt{2}) \\
%        \frac{\left(3\mu_n-\sum_{j=1}^{k-1} h_{n-j}\right)\left(\sum_{j=1}^{k-1} h_{n-j}\right)}{\left(\sum_{j=1}^{k-1} h_{n-j}\right)-2\mu_n } & \text{if } \sqrt{8}\,\mu_n<\sum_{j=1}^{k-1} h_{n-j} < 3\mu_n.%\Omega_{k-1} > 2(1+\sqrt{2}).
%\end{cases}
%\end{align}
\[
h_n= \frac{\sum_{j=1}^{k-1} h_{n-j}}{\left(\sum_{j=1}^{k-1} h_{n-j} \right)+2\mu_n}\cdot\mu_n.
\]
This relation also yields that
\[
\Omega_{k-1, n} = 2+\frac{1}{\mu_n}\sum_{j=1}^{k-1} h_{n-j},
\]
so $\Omega_{k-1, n}>2$ is guaranteed by $\mu_n>0$. 
Therefore, \eqref{Omegak-1range} is
equivalent to
\begin{equation}\label{sqrt8muineq}
\sum_{j=1}^{k-1} h_{n-j} \le \sqrt{8}\, \mu_n.
\end{equation}
The definition of $h_n$ in Theorem \ref{hnconvergencetheorem3rdorder} below is based on these considerations. 
The assumptions of the theorem
on the starting values 
%$h_j$ ($1\le j\le k-1$), and
and on the problem (involving the boundedness of the $\hFE$ quantities and that of their ratios)
% of  
%successive $\hFE$ quantities---these inequalities can
%be checked \textit{a posteriori} after each step)
are constructed to ensure \eqref{sqrt8muineq}. As a conclusion,
Theorem \ref{hnconvergencetheorem3rdorder} uses the maximum allowable SSP step size 
 \eqref{ssp-step-size} together with the optimal SSP coefficient $\sspcoeff_n>0$.
\begin{thm}\label{hnconvergencetheorem3rdorder}
Consider the solution of \eqref{ivp} by the third-order formula \eqref{VSSSSPLMMk3}
with $k=4$ or $k=5$.
Let the initial $k-1$ step sizes be
positive and let the subsequent step sizes $h_n$ be chosen according to
\begin{equation}\label{k45hndef}
h_n:=\frac{\sum_{j=1}^{k-1} h_{n-j}}{\left(\sum_{j=1}^{k-1} h_{n-j} \right)+2\mu_n}\cdot\mu_n.
\end{equation}
Assume that \eqref{murestriction}-\eqref{hferestriction}
and the condition
\begin{equation}\label{hjrestriction}
0<h_j\le \varrho\cdot\hFE(u_j)\quad\text{for}\ j=1,\ldots,k-1
\end{equation}
hold with
\begin{equation}\label{particularrhovalues}
(\varrho,\rhoFE)=\begin{cases} 
        \left(\frac{6}{10}, \frac{9}{10}\right) & \text{for } k=4, \\ 
        \left(\frac{57}{100}, \frac{962}{1000}\right) & \text{for } k=5.
\end{cases}
\end{equation}
Then the step-size sequence $h_n$ satisfies
\begin{equation}\label{3ndorderinfsup}
\frac{k-3}{k-1}\,\mu^-\le \liminf_{n\to+\infty} h_n \le \limsup_{n\to+\infty} h_n \le \frac{k-3}{k-1}\,\mu^+.
\end{equation}
%In particular, no subsequence of $h_n$ can converge to $0$. 
As a special case, if $\hFE(u_{n})$ is constant, then 
\[
h_n \to \frac{k-3}{k-1}\,\hFE\quad (n\to +\infty).
\]
\end{thm}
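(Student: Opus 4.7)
The plan is to follow the two-phase blueprint of Theorem \ref{hnconvergencetheorem}, prefaced by a verification that the SSP coefficient formula from Proposition \ref{prop1} applies throughout the iteration. Recall that Proposition \ref{prop1} guarantees $\sspcoeff_n=(\Omega_{k-1,n}-2)/\Omega_{k-1,n}$ only for $\Omega_{k-1,n}\in(2,2(1+\sqrt{2})]$, equivalently for $S_n:=\sum_{j=1}^{k-1}h_{n-j}\in(0,\sqrt{8}\,\mu_n]$. The lower bound is automatic from positivity, so the substantive task is to show \eqref{sqrt8muineq} holds for every $n\ge k$.

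\textbf{Phase 0 (validity of the recurrence).} I would establish by induction on $n$ the stronger invariant
\[
h_j\le \varrho\,\hFE(u_j)\qquad\text{for every } j\ge 1,
\]
with $\varrho$ as in \eqref{particularrhovalues}. The base case $j=1,\dots,k-1$ is exactly \eqref{hjrestriction}. For the inductive step, combining the invariant for $j<n$ with the geometric bound $\hFE(u_{n-j})\le\hFE(u_n)/\rhoFE^{\,j}$ implied by \eqref{hferestriction} gives
\[
S_n\;\le\;\varrho\,\hFE(u_n)\sum_{j=1}^{k-1}\rhoFE^{\,-j},
\]
while \eqref{hferestriction} also yields $\mu_n\le\hFE(u_n)/\rhoFE$ and $\mu_n\ge\rhoFE^{\,k}\hFE(u_n)$. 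The rational values in \eqref{particularrhovalues} are calibrated so that two resulting inequalities hold for $k\in\{4,5\}$: (a) $(k-1)\varrho\rhoFE^{-(k-1)}<\sqrt{8}$, which already secures \eqref{sqrt8muineq}, and (b) the closing inequality $h_n\le\varrho\,\hFE(u_n)$. For (b), inserting the bounds on $S_n$ and $\mu_n$ into $h_n=S_n\mu_n/(S_n+2\mu_n)$ and exploiting monotonicity of this expression in $S_n$, the check reduces to a one-variable inequality over $\mu_n/\hFE(u_n)\in[\rhoFE^{\,k},\,1/\rhoFE]$, trivial in the regime $\mu_n\le\varrho\,\hFE(u_n)$ (where $h_n<\mu_n$ already suffices) and a direct arithmetic verification otherwise.

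\textbf{Phase 1 (asymptotic sandwich).} Once \eqref{k45hndef} is a bona fide nonlinear recurrence for all $n$, I would argue exactly as in the proof of Theorem \ref{hnconvergencetheorem} (compare Remark \ref{greedyoptimalremark}). The map $g(S,\mu):=S\mu/(S+2\mu)$ is strictly increasing in each argument, and for any fixed $\mu>0$ the scalar iteration $\xi\mapsto g((k-1)\xi,\mu)$ has the unique attracting fixed point $\frac{k-3}{k-1}\mu$. Using this monotonicity together with $\mu^-\le\mu_n\le\mu^+$ from \eqref{murestriction}, one sandwiches $(h_n)$ between the orbits of the two scalar iterations driven by $\mu^\pm$; passing to $\liminf$ and $\limsup$ and solving the resulting fixed-point inequalities $L\ge g((k-1)L,\mu^-)$ and $U\le g((k-1)U,\mu^+)$ yields \eqref{3ndorderinfsup}. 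The constant-$\hFE$ case is the degenerate limit $\mu^-=\mu^+$, in which the two envelopes collapse to the unique fixed point.

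\textbf{Main obstacle.} The substantive difficulty is concentrated in Phase 0, specifically in propagating $h_n\le\varrho\,\hFE(u_n)$: this reduces to a coupled numerical inequality in $\varrho,\rhoFE,k$ whose feasibility region shrinks rapidly with $k$, explaining both why only $k\in\{4,5\}$ are treated and why the rational values in \eqref{particularrhovalues} look so specific (a direct check shows the margin for $k=5$ is extremely small, essentially forcing the choice $\rhoFE=962/1000$). Phase 1 is an almost verbatim adaptation of the second-order argument, with the denominator $S+\mu$ of $g$ replaced by $S+2\mu$ and the asymptotic constant $(k-2)/(k-1)$ replaced by $(k-3)/(k-1)$.
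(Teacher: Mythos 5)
Your overall strategy matches the paper's: establish by induction the invariant $h_j\le\varrho\,\hFE(u_j)$, deduce $\sum_{j=1}^{k-1}h_{n-j}\le\sqrt{8}\,\mu_n$ so that the optimal SSP coefficient from Proposition~\ref{prop1} applies at every step, and then sandwich the orbit between two $\mu^\pm$-driven sequences and invoke Theorem~\ref{attractivitytheorem} with $A=2$. Phase~1 is essentially correct and matches the paper's Step~8.

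However, Phase~0 contains a concrete gap: the intermediate bounds you write down do not yield condition (a), and in fact the condition that they do yield is false for the values in \eqref{particularrhovalues}. You anchor both estimates at $\hFE(u_n)$, obtaining $S_n\le\varrho\,\hFE(u_n)\sum_{j=1}^{k-1}\rhoFE^{-j}$ and $\mu_n\ge\rhoFE^{k}\hFE(u_n)$. Combining these, the sufficient condition for \eqref{sqrt8muineq} is
\[
\varrho\sum_{j=1}^{k-1}\rhoFE^{-j}\le\sqrt{8}\,\rhoFE^{k},
\]
not $(k-1)\varrho\rhoFE^{-(k-1)}<\sqrt{8}$ as you claim. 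For $k=4$, $(\varrho,\rhoFE)=(0.6,0.9)$, the left side is about $2.23$ while $\sqrt{8}\cdot 0.9^{4}\approx 1.86$, so the inequality fails; the situation is even worse for $k=5$, where the margins are razor-thin. The paper avoids this loss by anchoring at $\hFE(u_{n-1})$ rather than $\hFE(u_n)$: since $u_{n-1}$ is \emph{inside} the window over which $\mu_n$ in \eqref{generalmundef} is taken, one gets $S_n\le\hFE(u_{n-1})\sum_{j=1}^{k-1}\varrho\,\rhoFE^{-(j-1)}$ and $\mu_n\ge\rhoFE^{k-1}\hFE(u_{n-1})$, leading to the sufficient condition \eqref{label49suff1}, which is tighter by a factor of $\rhoFE^{2}$ and does hold for the prescribed values (with a margin of only about $0.17\%$ for $k=5$). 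The same issue affects your closing step (b): the paper obtains $h_n\le\frac{\sigma}{2+\sigma}\hFE(u_{n-1})$ with $\sigma:=\sum_{j=1}^{k-1}\varrho\rhoFE^{-(j-1)}$ and then uses $\varrho\,\hFE(u_n)\ge\varrho\rhoFE\,\hFE(u_{n-1})$, yielding \eqref{label52suff2}; normalizing by $\hFE(u_n)$ again costs you exactly the factor $\rhoFE^{2}$ that the constants cannot afford. To repair the proof, replace your reference point $\hFE(u_n)$ by $\hFE(u_{n-1})$ throughout Phase~0 and verify the two resulting conditions \eqref{label49suff1}--\eqref{label52suff2} numerically for $k\in\{4,5\}$.
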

%\begin{rem}
%For $k=4$ and $k=5$, the coefficient $\frac{6-k}{7-k}\cdot (1+\sqrt{2})$ in 
%hypothesis $(S_2)$ is approximately equal to
%$1.609$ and $1.207$, respectively. 
%In view of Remark \ref{remark2}, both assumptions $(S_1)$ and $(S_2)$ are natural. Moreover, 
%the exclusion of the $\Omega_{k-1} > 2(1+\sqrt{2})\approx 4.828$ case is not a severe restriction
%in general, since in the fixed-step-size case $\Omega_{k-1}=k-1\in \{3,4\}$.
%Notice also that in the special case when $\mu_n$ is constant $(=\mu^\pm)$, the assumptions $(S_1)$ and $(S_2)$ reduce to a bound only on the starting values $h_j$. 
%\end{rem}
The proof of this theorem is given in Section \ref{sec:proofs}. From the proof we will see that
it is possible to slightly adjust the simple $(\varrho,\rhoFE)$ values given in 
\eqref{particularrhovalues} based on properties of the problem to be solved; see Figure \ref{doubletrianlges}.

\begin{rem}
Our greedy step-size selection \eqref{k45hndef} for the third-order methods in the above theorem is
optimal in the same sense as it is described in Remark \ref{greedyoptimalremark}.
\end{rem}

\section{Stability and convergence\label{sec:convergence}}
The conditions for a method to have positive SSP coefficient are closely
related to sufficient conditions to ensure stability and convergence.  

Recall that a linear multistep method is said to be zero-stable if it produces a bounded
sequence of values when applied to the initial value problem 
\begin{align} \label{zero-ode} u'(t) & = 0 & u(t_0) & = u_0 & t & \in [t_0,t_0+T] \end{align}
(see, e.g. \cite[Section III.5]{hairer1993}).
The following result shows that variable step-size SSP methods are zero-stable
as long as the step-size restriction for monotonicity is observed.
\begin{thm} \label{thm:stable}
Let $\alpha_{j,n}$ be the coefficients of a variable step-size linear multistep
method \eqref{lmm} and suppose that $\alpha_{j,n}\ge 0$ for all $n,j$.  Then the
method is zero-stable.
\end{thm}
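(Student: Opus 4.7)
The plan is to apply the method directly to the trivial test equation $u' = 0$ and exploit the convex-combination structure that the sign condition on $\alpha_{j,n}$ provides. When $f\equiv 0$, the recursion \eqref{lmm} reduces to
\[
u_n = \sum_{j=0}^{k-1} \alpha_{j,n}\, u_{n-k+j},
\]
so zero-stability amounts to showing that this purely linear recursion produces a bounded sequence $(u_n)$ for any choice of starting values.

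The first step is to invoke the consistency of the method: the normalization condition contained in \eqref{oc-vss} gives $\sum_{j=0}^{k-1} \alpha_{j,n} = 1$ at every step $n$. Together with the hypothesis $\alpha_{j,n}\ge 0$, this says that each $u_n$ is a genuine convex combination of the preceding $k$ iterates.

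The second step is to apply the triangle inequality in any norm $\|\cdot\|$ on the state space (the problem \eqref{zero-ode} being finite-dimensional), which yields
\[
\|u_n\| \;\le\; \sum_{j=0}^{k-1} \alpha_{j,n}\, \|u_{n-k+j}\| \;\le\; \max_{0\le j\le k-1} \|u_{n-k+j}\|.
\]
A one-line induction on $n$ then gives $\|u_n\|\le M$ for every $n\ge k-1$, where $M:=\max(\|u_0\|,\ldots,\|u_{k-1}\|)$, which is exactly the boundedness required for zero-stability.

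There is essentially no obstacle here: the argument is a special case of the monotonicity bound \eqref{discrete-monotonicity} of Theorem \ref{thm:ssplmm} in the limit $f\equiv 0$, where the step-size restriction \eqref{ssp-step-sizefirst} becomes vacuous and the conclusion holds uniformly with respect to the step-size sequence $(h_n)$ (only $h_n>0$ is used). In particular, this is arguably the cleanest manifestation of the principle that non-negativity of the $\alpha_{j,n}$ coefficients — the defining feature of SSP-type LMMs — automatically yields the standard stability property underlying every convergence proof for multistep methods.
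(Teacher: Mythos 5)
Your proof is correct and is essentially the same as the paper's: both reduce to the recursion $u_n = \sum_{j=0}^{k-1} \alpha_{j,n} u_{n-k+j}$, note that non-negativity plus the consistency condition $\sum_j \alpha_{j,n}=1$ makes each $u_n$ a convex combination of earlier iterates, and conclude boundedness. You merely spell out the triangle-inequality and induction step that the paper leaves implicit.
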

\begin{proof}
Application of \eqref{lmm} to \eqref{zero-ode} yields the recursion
$$ u_n = \sum_{j=0}^{k-1} \alpha_{j,n} u_{n-k+j}.$$
Since the coefficients $\alpha_{j,n}$ are non-negative and sum to one,
each solution value $u_n$ is a convex combination of previous solution values.
Therefore the sequence is bounded.
\end{proof}

\begin{cor}\label{stabilitycorollary}
Let an SSP variable step-size LMM be given and suppose that the step sizes
are chosen so that $\sspcoeff_n>0$ for each $n$.  Then the method is zero-stable.
\end{cor}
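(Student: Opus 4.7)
The plan is to derive the corollary directly from Theorem \ref{thm:stable} by unpacking the definition of the SSP coefficient. First, I would recall the definition \eqref{sspcoeffdef}: the quantity $\sspcoeff_n$ is set to $0$ unless both $\alpha_{j,n}\ge 0$ and $\beta_{j,n}\ge 0$ hold for every index $j$. Therefore, the hypothesis $\sspcoeff_n>0$ for each $n$ immediately forces $\alpha_{j,n}\ge 0$ (and $\beta_{j,n}\ge 0$) for all admissible $j,n$.

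Once the non-negativity of the $\alpha_{j,n}$ coefficients is in hand, I would simply invoke Theorem \ref{thm:stable}: a variable step-size LMM \eqref{lmm} with non-negative $\alpha_{j,n}$ is zero-stable. This yields the conclusion without any further computation.

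There is no real obstacle here; the only thing worth emphasizing in the write-up is that the positivity of $\sspcoeff_n$ encodes more than a bound on a single scalar — it is a sign condition on the full family of method coefficients. So the corollary is a one-line consequence, and the proof should make the logical chain $\sspcoeff_n>0 \Rightarrow \alpha_{j,n}\ge 0 \Rightarrow \text{zero-stability via Theorem \ref{thm:stable}}$ explicit.
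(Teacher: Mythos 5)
Your proof is correct and matches the approach the paper intends: the corollary is left unproved precisely because it is an immediate consequence of the definition \eqref{sspcoeffdef} (which forces $\alpha_{j,n}\ge 0$ when $\sspcoeff_n>0$) combined with Theorem~\ref{thm:stable}. You have spelled out exactly the one-line logical chain the authors had in mind.
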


In order to prove convergence, we must also bound the local error by bounding
the ratio of successive step sizes and bounding the method coefficients.
%The following lemma shows
%that the SSP step size ensures some nice properties of the method. 
%\begin{lem} \label{lem:bounded-general}
%Let a variable step size method and time step strategy be given such that
%the SSP coefficient of the method is bounded uniformly away from zero: 
%$\sspcoeff_n>\epsilon>0$ with $\epsilon$ independent of $n$.  Then 
%$$0 \le \alpha_{j,n},\beta_{j,n} < \max(1,\epsilon^{-1}) \ \ \ \text{for all } n.$$
%\end{lem}
%\begin{proof}
%    The SSP conditions imply $0 \le \alpha_{j,n}\le 1$ and 
%    $0\le \beta_{j,n} \le \epsilon^{-1} \alpha_{j,n}$.
%\end{proof}
\begin{lem} \label{lem:bounded}
For method \eqref{VSSSSPLMMk2} with $k\ge 3$ or
method \eqref{VSSSSPLMMk3} with $k=4$ or $5$, let the step
sizes be chosen so that $\sspcoeff_n>0$.  Then there exists a constant $\Lambda$
such that 
$$0 \le \alpha_{j,n},\beta_{j,n} < \Lambda \ \ \ \text{for all } n.$$
\end{lem}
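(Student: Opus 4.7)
The plan is to observe that in both \eqref{VSSSSPLMMk2} and \eqref{VSSSSPLMMk3} every nonzero coefficient $\alpha_{j,n},\beta_{j,n}$ is an explicit rational function of the single quantity $\Omega_{k-1,n}$. The lemma therefore reduces to showing that each of these rational functions is bounded on the half-line of $\Omega_{k-1,n}$-values consistent with the hypothesis $\sspcoeff_n>0$, and then taking $\Lambda$ to be the maximum of the finitely many resulting bounds.

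For method \eqref{VSSSSPLMMk2}, formula \eqref{eq17} together with the identity $\Omega_k=\Omega_{k-1}+1$ gives $\sspcoeff_n=(\Omega_{k-1}-1)/\Omega_{k-1}$, so the assumption $\sspcoeff_n>0$ is equivalent to $\Omega_{k-1,n}>1$. Reading off the three nonzero coefficients and simplifying yields $\alpha_{k-1,n}=1-\Omega_{k-1}^{-2}$, $\beta_{k-1,n}=1+\Omega_{k-1}^{-1}$, and $\alpha_{0,n}=\Omega_{k-1}^{-2}$; each is manifestly in $(0,2)$ for $\Omega_{k-1}>1$. For method \eqref{VSSSSPLMMk3} I would compute $\sspcoeff_n$ directly from \eqref{sspcoeffdef} (as already done in the proof of Proposition \ref{prop1}) to obtain $\sspcoeff_n=\min\!\left(\frac{\Omega_{k-1}-2}{\Omega_{k-1}},\,\frac{3\Omega_{k-1}+2}{\Omega_{k-1}(\Omega_{k-1}+1)}\right)$. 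Since $\Omega_{k-1}>0$ always, the second argument of the minimum is positive, so the hypothesis $\sspcoeff_n>0$ is equivalent to $\Omega_{k-1,n}>2$. The four nonzero coefficients are rational in $\Omega_{k-1}$ with denominators $\Omega_{k-1}^2$ or $\Omega_{k-1}^3$; each is continuous on $(2,\infty)$, has a finite limit as $\Omega_{k-1}\to 2^+$ (the numerators are polynomials, and the denominator is nonzero), and a finite limit as $\Omega_{k-1}\to\infty$. For instance $\beta_{k-1,n}=(1+\Omega_{k-1}^{-1})^2\le 9/4$ on $(2,\infty)$, and analogous explicit bounds are immediate for the others. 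A uniform $\Lambda$ follows by taking the maximum of these finitely many bounds.

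There is essentially no substantial obstacle; the argument is bookkeeping. The only point requiring care is correctly identifying which of the two terms inside the $\min$ in the third-order SSP coefficient is binding at the threshold, so as to conclude that $\sspcoeff_n>0\iff\Omega_{k-1,n}>2$ (rather than some stricter constraint). Once this is done, uniform boundedness of each coefficient over the full admissible range of $\Omega_{k-1,n}$ follows from continuity plus the existence of finite endpoint limits.
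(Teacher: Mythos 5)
Your argument is correct and is essentially the paper's proof: both establish that $\sspcoeff_n>0$ forces $\Omega_{k-1,n}>1$ (second order) or $\Omega_{k-1,n}>2$ (third order), and then bound the finitely many nonzero coefficients, which are rational in $\Omega_{k-1,n}$, over that range, arriving at the same numerical bounds ($\beta_{k-1,n}<2$ for the second-order method; $\beta_{k-1,n}<9/4$, $\beta_{0,n}<3/4$ for the third-order methods). The paper is marginally more compact only in that it disposes of all the $\alpha_{j,n}$ at once by noting that $\sspcoeff_n>0$ forces them nonnegative, and since they sum to one each lies in $[0,1]$, rather than computing each $\alpha_{j,n}$ explicitly as you do.
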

\begin{proof}
For any method, $\sspcoeff_n>0$ implies that $\beta_{j,n} \ge 0$ and $0\le \alpha_{j,n} \le 1$.
For the second-order methods, $\sspcoeff_n>0$ implies $\Omega_{k-1,n}>1$, which
implies $\beta_{k-1,n}<2$.  For the third-order methods, $\sspcoeff_n>0$ implies
$\Omega_{k-1,n}>2$, which implies $\beta_{k-1,n}<9/4$ and $\beta_{0,n}<3/4$.
\end{proof}

We recall the following result from \cite[Section III.5, Theorem 5.8]{hairer1993}.
\begin{thm}\label{convergencethm}
Let a variable step-size LMM be applied to an initial value problem with a given
$f$ and on a time interval $[t_0,t_0+T]$.  Let $h=\max_n h_n$ be the largest step in the step
sequence.  Assume that
\begin{enumerate}
    \item the method is stable, of order $p$, and the coefficients $\alpha_{j,n},\beta_{j,n}$
        are bounded uniformly as $h\to 0$;
    \item the starting values satisfy $\|u(t_j)-u_j\| = \order(h_0^p)$ where $h_0$ is a bound on the starting step sizes;
    \item $h_n/h_{n-1}\le\eta$ where $\eta$ is independent of $n$ and $h$.
\end{enumerate}
Then the method is convergent of order $p$; i.e., $\|u(t_n) - u_n\| = \order(h^p)$ for all $t_n\in
[t_0,t_0+T]$.
\end{thm}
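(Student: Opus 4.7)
The plan is to follow the standard convergence proof for linear multistep methods, adapted to the variable step-size setting, tracking carefully how each of the three hypotheses enters. First I would define the local truncation error
\begin{equation*}
\tau_n := u(t_n) - \sum_{j=0}^{k-1}\Bigl(\alpha_{j,n}\, u(t_{n-k+j}) + h_n\beta_{j,n}\, f(u(t_{n-k+j}))\Bigr),
\end{equation*}
Taylor-expand $u(t_{n-k+j})$ and $f(u(t_{n-k+j}))$ about $t_n$ using $t_{n-k+j}-t_n = -h_n(\Omega_k-\Omega_j)$, and invoke the order-$p$ conditions \eqref{oc-vss} together with the coefficient bound from assumption~(1) to conclude $\|\tau_n\| = \order(h_n^{p+1})$ uniformly in $n$, provided $u$ is sufficiently smooth. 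Subtracting the defining relation \eqref{lmm} from the exact-solution identity then gives the error recursion
\begin{equation*}
e_n = \sum_{j=0}^{k-1}\alpha_{j,n}\, e_{n-k+j} + h_n\sum_{j=0}^{k-1}\beta_{j,n}\bigl[f(u(t_{n-k+j}))-f(u_{n-k+j})\bigr] + \tau_n,
\end{equation*}
where $e_n := u(t_n)-u_n$; the Lipschitz continuity of $f$ and the coefficient bound control the middle term by $h_n L \Lambda \sum_j \|e_{n-k+j}\|$.

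Next I would pack the error into the vector $E_n := (\|e_n\|,\ldots,\|e_{n-k+1}\|)^T$ and write a componentwise inequality of the form $E_n \le A_n E_{n-1} + h_n \Gamma\, E_{n-1} + R_n$, where $A_n$ is the companion matrix of $(\alpha_{j,n})_j$, $\Gamma$ is a fixed nonnegative matrix with entries bounded by $L\Lambda$, and $R_n$ collects $\|\tau_n\|$. Stability (hypothesis~(1)) supplies a constant $M$ such that $\|A_n A_{n-1}\cdots A_{m+1}\|\le M$ for all $m<n$ in some fixed vector norm, and a discrete variation-of-constants argument then yields
\begin{equation*}
\|E_n\| \le M\|E_{k-1}\| + M\sum_{m=k}^{n}\bigl(h_m L\Lambda\,\|E_{m-1}\| + \|\tau_m\|\bigr).
\end{equation*}
A discrete Gronwall inequality converts this into $\|E_n\| \le e^{ML\Lambda T}\bigl(M\|E_{k-1}\| + M\sum_m \|\tau_m\|\bigr)$. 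Hypothesis~(2) bounds the starting error as $\order(h_0^p)=\order(h^p)$; since the number of steps is at most $T/h_{\min}$ and hypothesis~(3) allows one to bound $h/h_{\min}$ by a constant independent of $h$, the cumulative truncation $\sum_m \|\tau_m\|$ is $\order(T h^p)$, producing $\|e_n\| = \order(h^p)$.

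The main obstacle is upgrading the informal stability hypothesis into the uniform product bound $\|A_n\cdots A_{m+1}\|\le M$ actually used above. In the fixed-step case this is a classical consequence of the root condition on the generating polynomial, but for variable step size the $A_n$ vary with $n$ and need not commute, so one either assumes stability as an a~priori bound in exactly this form (essentially the approach of \cite[Section~III.5]{hairer1993}) or proves it through a perturbation argument, using hypothesis~(3) to show that neighbouring companion matrices differ by an amount controlled by $\eta-1$ and that their finite products therefore remain uniformly bounded on the fixed interval $[t_0,t_0+T]$. Hypothesis~(3) is also what prevents the accumulated local error from being inflated by an uncontrolled factor when replacing $h_{\min}$ by $h$ in the final estimate.
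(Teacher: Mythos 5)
The paper does not actually prove this theorem: it is quoted verbatim from Hairer, N\o rsett and Wanner \cite[Section~III.5, Theorem~5.8]{hairer1993}, so there is no in-paper argument to compare against. Your reconstruction follows the same route as that reference---local truncation error from the order conditions plus the coefficient bound, the error recursion written with companion matrices, the uniform product bound supplied by stability, and a discrete Gronwall lemma---so the overall structure is right.

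There is, however, one step that does not hold up. You write that the number of steps is at most $T/h_{\min}$ and that ``hypothesis~(3) allows one to bound $h/h_{\min}$ by a constant independent of $h$,'' and you use this to conclude $\sum_m\|\tau_m\|=\order(Th^p)$. The one-sided ratio condition $h_n/h_{n-1}\le\eta$ does not bound $h/h_{\min}$: it only prevents the step size from growing too fast from one step to the next, and one can construct admissible sequences in which the step size dips geometrically from $h$ down to $h_{\min}$ and back up again, consuming only $\order(h)$ of total time, so that $h\to 0$ while $h/h_{\min}\to\infty$. The detour through $h/h_{\min}$ is in any case unnecessary: once you have $\|\tau_m\|\le Ch_m^{p+1}$ uniformly, the correct summation is simply
\begin{equation*}
\sum_m\|\tau_m\| \le C\sum_m h_m^{p+1} \le C\,h^p\sum_m h_m \le C\,T\,h^p,
\end{equation*}
which uses nothing beyond $h_m\le h$ and $\sum_m h_m\le T$. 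With that replacement the rest of your argument goes through as intended; the actual role of hypothesis~(3) is elsewhere in the proof, namely to keep the step-size ratios (and hence the $\Omega_j$ entering the Taylor remainders and the variable companion matrices) in a compact set, which is what underwrites the uniform truncation constant $C$ and the uniform stability bound $M$.
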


The methods described in Section \ref{sec:stepsize} satisfy the conditions of Theorem \ref{convergencethm},
and are thus convergent, as shown in the following theorem.

\begin{thm}\label{ourconvergencethm}
Under the assumptions of Theorem \ref{hnconvergencetheorem}, our methods defined in Section  
\ref{subsectionk2conv} and with $\|u(t_j)-u_j\| = \order(h_0^2)$ for $j\le k$ are convergent of order $2$. Similarly, under the assumptions of Theorem
\ref{hnconvergencetheorem3rdorder}, our methods defined in  
Section \ref{subsectionk3conv} and with $\|u(t_j)-u_j\| = \order(h_0^3)$ for $j\le k$ are convergent of order $3$.
\end{thm}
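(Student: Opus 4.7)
The plan is to reduce the claim to Theorem \ref{convergencethm} by verifying its three hypotheses for both families of methods. For hypothesis (1), the greedy step-size recursions \eqref{general-greedy} and \eqref{k45hndef} are designed so that $\sspcoeff_n>0$ at every step --- indeed $\Omega_{k-1,n}>1$ in the second-order case and $\Omega_{k-1,n}>2$ in the third-order case, as observed in Section \ref{subsectionk3conv}. Zero-stability then follows from Corollary \ref{stabilitycorollary}; the orders $p=2$ and $p=3$ are built into the formulae \eqref{VSSSSPLMMk2} and \eqref{VSSSSPLMMk3} by construction; and uniform boundedness of the coefficients is supplied by Lemma \ref{lem:bounded}, whose hypothesis is precisely $\sspcoeff_n>0$. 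Hypothesis (2) is an explicit assumption in the theorem's statement.

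The substantive step is hypothesis (3): exhibiting a constant $\eta$, independent of $n$ and of the discretization, such that $h_n/h_{n-1}\le\eta$. Theorems \ref{hnconvergencetheorem} and \ref{hnconvergencetheorem3rdorder} furnish positive constants $c,C$, depending only on $k$ and $\mu^\pm$, with $c\le \liminf_{n\to\infty} h_n\le \limsup_{n\to\infty} h_n\le C$. Hence there exists an index $N$ beyond which $h_n\in[c,C]$, giving $h_n/h_{n-1}\le C/c$ for $n>N$. For the finite transient $n\le N$ one uses the recursions directly: $0<h_n\le\mu_n\le\mu^+$ whenever $n\ge k$, and the starting step sizes $h_1,\dots,h_{k-1}$ are positive (and bounded above, in the third-order case, by \eqref{hjrestriction}). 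Taking the maximum of the finitely many ratios $h_n/h_{n-1}$ for $n\le N$ together with $C/c$ yields a valid $\eta$, and Theorem \ref{convergencethm} then produces the claimed orders of convergence.

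The main obstacle is ensuring that $\eta$ is genuinely independent of the refinement parameter. The asymptotic part is clean, since $c$ and $C$ depend only on the intrinsic problem quantities $\mu^\pm$ and $k$; the more delicate point is that the length $N$ of the transient and the corresponding maximum of $h_n/h_{n-1}$ for small $n$ can depend on the starting data. Provided the starting step sizes are rescaled proportionally under refinement (so that ratios such as $h_2/h_1$ are preserved), this transient contribution is also bounded independently of the refinement, and one can safely pass to the limit $h\to 0$ as required for order-$p$ convergence.
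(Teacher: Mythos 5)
Your argument mirrors the paper's proof exactly in structure: verify hypotheses (1) and (3) of Theorem \ref{convergencethm} via $\sspcoeff_n>0$, Lemma \ref{lem:bounded}, and zero-stability, and then appeal to the step-size asymptotics from Theorems \ref{hnconvergencetheorem} and \ref{hnconvergencetheorem3rdorder}. The only difference is that you spell out more carefully than the paper why the ratio bound $h_n/h_{n-1}\le\eta$ is uniform: the paper states flatly that it is implied by \eqref{2ndorderinfsup} and \eqref{3ndorderinfsup}, which are only asymptotic, while you correctly flag the finite transient and observe that it is controlled because the recursions \eqref{general-greedy} and \eqref{k45hndef} are homogeneous of degree one in $(h_{n-1},\dots,h_{n-k+1},\mu_n)$, so proportional rescaling of the starting data and of $\hFE$ leaves the ratio sequence unchanged; this fills in a point the paper leaves implicit, and your proof is correct.
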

\begin{proof}
It is sufficient to show that assumptions 1 and 3 of Theorem \ref{convergencethm} are fulfilled.
Our construction in Sections \ref{subsectionk2conv} and 
\ref{subsectionk3conv}  guarantees $\sspcoeff_n>0$.  Therefore Lemma \ref{lem:bounded} applies, so the coefficients $\alpha_{j,n}, \beta_{j,n}$ are uniformly bounded and non-negative. Thus Theorem \ref{thm:stable} applies, so the methods are stable.  Furthermore, the condition $h_n/h_{n-1}\le\eta<+\infty$ 
is implied by \eqref{2ndorderinfsup} or \eqref{3ndorderinfsup}, hence Theorem \ref{convergencethm}
is applicable.
\end{proof}

\section{Numerical examples\label{sec:examples}}
In this section we investigate the performance of the proposed methods by performing numerical tests.
In Section \ref{subsec:convergence}, the accuracy of our methods is verified by a
convergence test on a linear equation with time-varying advection velocity.
In Sections \ref{subsec:burgers}--\ref{subsec:shockbubble} we apply the methods SSPMSV32 and
SSPMSV43 to nonlinear hyperbolic conservation laws in one and two dimensions.  

All code used to generate the numerical results in this work is available at
\url{https://github.com/numerical-mathematics/ssp-lmm-vss_RR}.

\subsection{Efficiency}
Let $N$ denote the number of steps (excluding the starting steps) used to march with a $k$-step variable
step-size method from time $t_0$ to a final time $t_0+T$.
If the same method were used with a fixed step size, the number of steps required would be at least
$N' = (T-\sum_{j=1}^{k-1}h_j)/h_\textup{min}$, where $h_\textup{min}$ is the smallest step size used by
the variable step-size method.
Thus the reduction in computational cost by allowing a variable step size is given by
\begin{align*}
    s := \frac{N}{N'} = \frac{h_\textup{min}}{h_\textup{avg}},
\end{align*}
where $h_\textup{avg}:=(T-\sum_{j=1}^{k-1}h_j)/N$ is the mean step size used by the variable step-size
method.

\subsection{Spatial discretization}
\begin{sloppypar}
In space, we use the wave-propagation semi-discretizations described in \cite{2013_sharpclaw}.
For the second-order temporal schemes, we use a spatial reconstruction based on the 
total-variation-diminishing (TVD) monotonized-central-difference (MC) limiter \cite{VanLeer1977};
for the third-order temporal schemes, we use fifth-order WENO reconstruction.
The only exception is the Woodward--Colella problem, where we use the TVD spatial discretization for both second-
and third-order methods.
The second-order spatial semi-discretization is provably TVD (for scalar, 1D problems) under forward Euler integration
with a CFL number of $\cflFE = 1/2$.
We use this value in the step-size selection algorithm for all methods and all problems.
\end{sloppypar}

\subsection{Time-stepping algorithm}
\begin{sloppypar}
The complete time-stepping algorithm is given in Algorithms \ref{alg2} and \ref{alg3}.
We denote by $\cfl_n$ the CFL number at step $n$.
By default we take the initial step size $h_1=0.1$.
In order to ensure monotonicity of the starting steps, we use the two-stage, second-order SSP
Runge--Kutta method to compute them, with step size
\begin{align*}%\label{starting_methods_stepsize}
	h_n := \gamma \, \sspcoeff_0 \, \hFE(u_{n-1}), \quad n = 1, \dots, k-1,
\end{align*}
where $\sspcoeff_0=1$ is the SSP coefficient of the Runge--Kutta method and $\gamma = 0.9$
is a safety factor (cf. Section~\ref{sec:stepsize}).
\end{sloppypar}

For our third-order methods we can check conditions \eqref{hferestriction} and \eqref{hjrestriction} only
{\em a posteriori}.
%If the sequence of forward Euler step sizes $\{\hFE(u_j)\}_{1\le j \le k}$ is increasing and
%$\gamma \sspcoeff_0 \le \varrho$, then condition \eqref{hjrestriction} is satisfied.
If condition \eqref{hferestriction} or \eqref{hjrestriction} is violated, then the
computed solution is discarded and the current step is repeated with a smaller step size as described in
Algorithm~\ref{alg3}.
These step-size reductions can be repeated if necessary, but in our computations the first reduction was
always sufficient so that the new step was accepted.
For the numerical examples of this section, the step size and CFL number corresponding to the starting
methods are shown on the right of Figures~\ref{stepsize_plots_1D} and~\ref{stepsize_plots_2D}.

\begin{algorithm}[!ht]
\caption{Second-order time-stepping algorithm}
\label{alg2}
\algnewcommand{\block}[2]{\vspace*{#1pt}\State\textbf{#2:}}
\algnewcommand{\mystate}{\State\hspace*{4pt}}
\algrenewcommand\algorithmicfor{\hspace*{10pt}\textbf{for}}
\algrenewtext{EndFor}{\hspace*{10pt}\textbf{end for}}
\algrenewtext{ElsIf}[1]{\hspace*{5pt}\textbf{else if #1 then}}
\algrenewcommand\algorithmicif{\hspace*{5pt}\textbf{if}}
\algrenewcommand\algorithmicelse{\hspace*{5pt}\textbf{else}}
\algrenewcommand\algorithmicend{\hspace*{5pt}\textbf{end}\hspace*{-5pt}}
\begin{algorithmic}
\block{5}{Initialization}
	Choose an initial step size $h_1$ and
		compute the forward Euler step size $\hFE(u_0)$.

\block{5}{Starting procedure}
\For{$n = 1, 2, \dots, k-1$}
	\mystate Compute $u_n$ using the two-stage, second-order SSP Runge--Kutta method with
	\mystate step size $h_n$.
	\mystate Find the CFL number $\cfl_n$ and compute $\hFE(u_n)$.
	\mystate Set $h \gets \gamma \, \sspcoeff_0 \, \hFE(u_n)$.
	\If{$\cfl_n > \sspcoeff_0 \, \cflFE$}
		\State Set $h_n \gets h$ and repeat the step.
	\ElsIf{$n<k-1$}
		\State Set $h_{n+1} \gets h$.
	\Else
		\State Set $h_k$ based on \eqref{general-greedy}.
	\EndIf
\EndFor

\block{5}{Main method}
\For{$n = k, \dots, N$}
    \mystate Compute $u_n$ using the method \eqref{VSSSSPLMMk2} with step size $h_n$.
    \mystate Compute $h_{n+1}$ from \eqref{general-greedy}.
\EndFor
\end{algorithmic}
\end{algorithm}

\begin{algorithm}[!ht]
\caption{Third-order time-stepping algorithm}
\label{alg3}
\algnewcommand{\block}[2]{\vspace*{#1pt}\State\textbf{#2:}}
\algnewcommand{\mystate}{\State\hspace*{4pt}}
\algrenewcommand\algorithmicfor{\hspace*{10pt}\textbf{for}}
\algrenewtext{EndFor}{\hspace*{10pt}\textbf{end for}}
\algrenewtext{ElsIf}[1]{\hspace*{5pt}\textbf{else if #1 then}}
\algrenewcommand\algorithmicif{\hspace*{5pt}\textbf{if}}
\algrenewcommand\algorithmicelse{\hspace*{5pt}\textbf{else}}
\algrenewcommand\algorithmicend{\hspace*{5pt}\textbf{end}\hspace*{-5pt}}
\begin{algorithmic}
\block{5}{Initialization}
    Choose an initial step size $h_1$ and compute the forward Euler step size $\hFE(u_0)$.

\block{5}{Starting procedure}
\For{$n = 1, 2, \dots, k-1$}
    \mystate Compute $u_n$ using the two-stage, second-order SSP Runge--Kutta method with
	\mystate step size $h_n$.
    \mystate Find the CFL number $\cfl_n$ and compute $\hFE(u_n)$.
    \If{condition \eqref{hferestriction} or \eqref{hjrestriction} does not hold}
        set
        \begin{align*}
            h_n \gets \begin{cases}
                h_n/2 & \text{ if at least \eqref{hferestriction} is violated;} \\
                    \gamma \, \sspcoeff_0 \, \varrho \, \hFE(u_n)  &
                        \text{ if \eqref{hjrestriction} is violated,}
            \end{cases}
        \end{align*}
        \State and repeat the step.
    \Else 
        \State Set $h \gets \gamma \, \sspcoeff_0 \, \hFE(u_n)$.
  \EndIf
	\If{$\cfl_n > \sspcoeff_0 \, \cflFE$}
		\State Set $h_n \gets h$ and repeat the step.
	\ElsIf{$n<k-1$}
		\State Set $h_{n+1} \gets h$.
	\Else
		\State Set $h_k$ based on \eqref{k45hndef}.
	\EndIf
\EndFor

\block{5}{Main method}
\For{$n = k, \dots, N$}
    \mystate Compute $u_n$ using the method \eqref{VSSSSPLMMk3} with step size $h_n$.
    \If{condition \eqref{hferestriction} is violated}
        \State Set $h_n \gets h_n/2$ and repeat step.
    \Else
	\State Compute $h_{n+1}$ from \eqref{k45hndef}.
    \EndIf
\EndFor
\end{algorithmic}
\end{algorithm}
\begin{rem}
	There is no need to check the CFL condition $\cfl_n \le \sspcoeff_0 \, \cflFE$ for
	$n \ge k$ (i.e. when the LMM method is used) since it is automatically satisfied by the step size
	selection \eqref{ssp-step-size}.
\end{rem}

Condition \eqref{hferestriction} is violated only when the maximum wave speed changes
dramatically between consecutive steps, which may suggest insufficient temporal
resolution of the problem.
We have found that, for the problems considered herein, omitting the enforcement of
condition \eqref{hjrestriction} never seems to change the computed solution in a
significant way.  Since these two conditions were introduced only as technical
assumptions for some of our theoretical results, they could perhaps be omitted
in a practical implementation.

\subsection{Convergence test}\label{subsec:convergence}
We consider the linear advection problem
\begin{align} \label{advection}
    u_t + \left( 2 + \frac{3}{2}\sin(2 \pi t) \right) u_x = 0, \\
    u(x,0) = \sin(2\pi x), \quad x \in [0,1], \nonumber
\end{align} 
with periodic boundary conditions.
We use a spatial step size $\Delta x = 2^{-d}$, $d = 6,\dots,11$, and compute the solution at a final time 
$t = 5$ (i.e., after 10 cycles).
Table~\ref{convergencetest} shows the $\text{L}_1$-norm of the error at the final time.
The solution is computed by using second-order methods with $k = 3, 4$ steps and third-order methods
with $k = 4, 5$ steps.
All methods attain the expected order.

\begin{table}
    \centering
    \small
	\begin{tabular*}{\textwidth}{@{\extracolsep{\fill}} c|cc|cc|cc|cc}
		\toprule
		\multicolumn{1}{c}{$N$} & \multicolumn{2}{c}{SSPMSV32} & \multicolumn{2}{c}{SSPMSV42} & \multicolumn{2}{c}{SSPMSV43} & \multicolumn{2}{c}{SSPMSV53} \\
		\midrule
		$128$ & $1.50 \times 10^{-2}$ & & $1.83 \times 10^{-2}$ & & $9.20 \times 10^{-6}$ & & $6.08 \times 10^{-5}$ & \\
		$256$ & $4.30 \times 10^{-3}$ & $1.80$ & $5.34 \times 10^{-3}$ & $1.78$ & $1.30 \times 10^{-6}$ & $2.82$ & $8.10 \times 10^{-6}$ & $2.91$ \\
		$512$ & $1.15 \times 10^{-3}$ & $1.90$ & $1.44 \times 10^{-3}$ & $1.89$ & $1.68\times 10^{-7}$ & $2.95$ & $1.04 \times 10^{-6}$ & $2.96$ \\
		$1024$ & $3.01 \times 10^{-4}$ & $1.93$ & $3.81 \times 10^{-4}$ & $1.92$ & $2.13 \times 10^{-8}$ & $2.98$ & $1.32 \times 10^{-7}$ & $2.98$ \\
		$2048$ & $7.74 \times 10^{-5}$ & $1.96$ & $9.84 \times 10^{-5}$ & $1.95$ & $2.67 \times 10^{-9}$ & $2.99$ & $1.66 \times 10^{-8}$ & $2.99$\\
		\bottomrule
	\end{tabular*}
	\caption{$\text{L}_1$-norms of the error at final time $t = 5$ for the variable-coefficient
	advection problem \eqref{advection} for second- and third-order LMMs.
	For each method, the second column denotes the convergence order.
	The number of spatial points is indicated by $N$.}
    \label{convergencetest}
\end{table}

%\begin{figure}
%	\begin{center}
%	\includegraphics[width=0.7\textwidth]{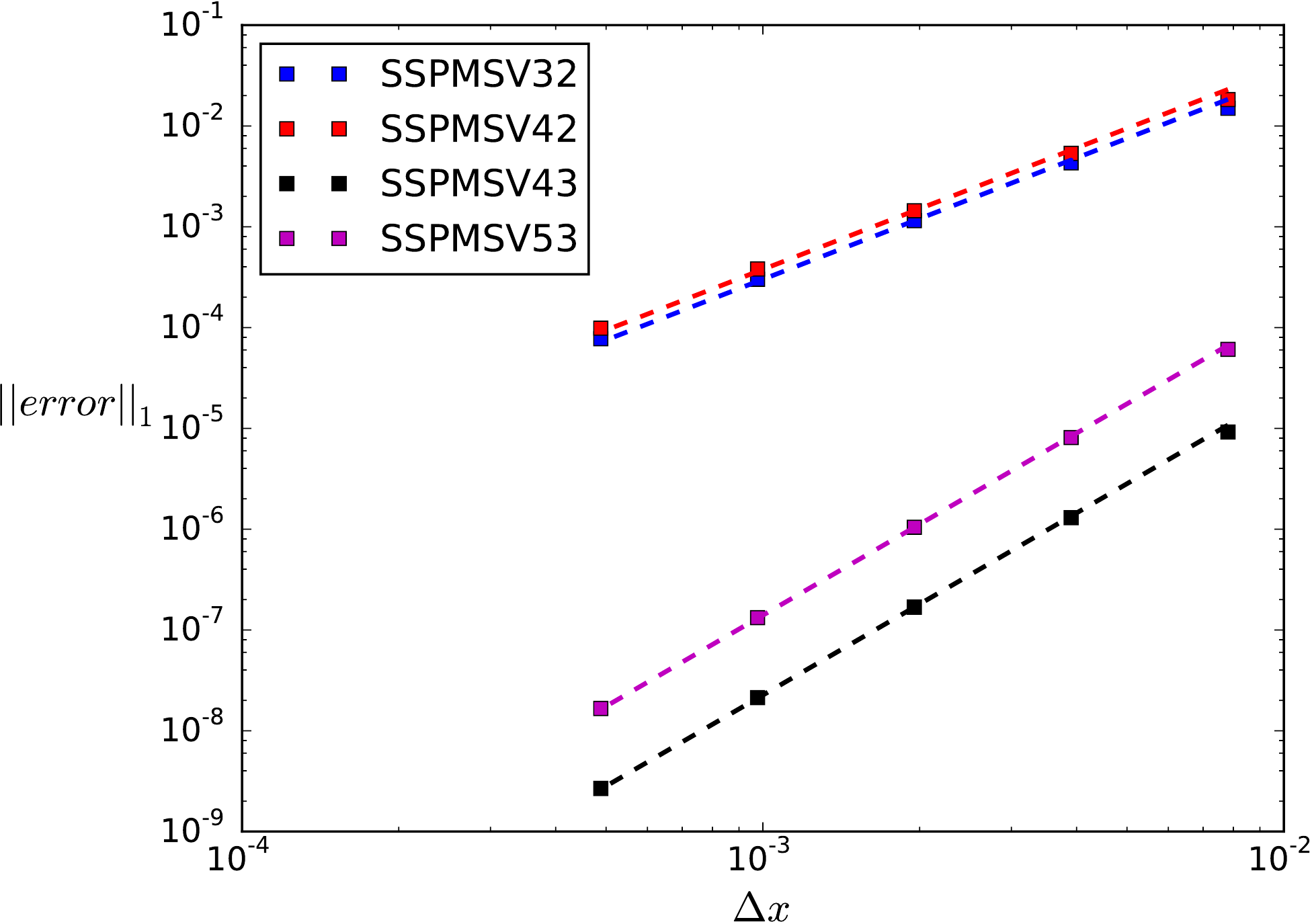}
%	\caption{$L_1$-norm errors of linear advection convergence study for second- and third-order LMMs.
%	For each method the second column denotes the convergence order.}
%	\label{conv_advection_1d}
%	\end{center}
%\end{figure}

\subsection{Burgers' equation}\label{subsec:burgers}
We consider the inviscid Burgers' initial value problem
\begin{gather*}
	u_t + u u_x = 0, \\
	u(x,0) = \frac{1}{2} + \sin(2\pi x), \quad x \in [0,1],
\end{gather*}
with periodic boundary conditions and $256$ grid cells.
Figure~\ref{stepsize_burgers_1d} shows the evolution of the step size, up to a final time $t = 0.8$.
The step size is constant until the shock forms, and then increases.
The dashed curves in Figure~\ref{stepsize_burgers_1d} indicate how the CFL number $\cfl_n$ varies with
time for each method.
Since $\cfl_n = \sspcoeff_n \cflFE$ and $\cflFE = 1/2$, the CFL number after the first few steps is $1/4$ and
$1/6$, for the second- and third-order LMMs, respectively.
Note that these are the theoretical maximum values for which the solution remains TVD.
For the third-order method coupled with WENO discretization the TVD-norm of the solution does not increase more than $10^{-4}$ over a single time step.
The sudden decrease of the step size after the first few steps of the simulation is due to the switch
from the starting (Runge--Kutta) method to the linear multistep method.
%The SSP coefficient of the starting method is equal to $1$, thus the allowed CFL number for the starting
%values is about twice as large relative to the CFL number of the second-order LMM and three times as large
%relative to the CFL number of the third-order method.
For both methods we have $s \approx 0.88$.

\subsection{Woodward--Colella blast-wave problem}\label{subsec:WCblast}
Next we consider the one-dimensional Euler equations for inviscid, compressible flow:
\begin{align*}
	 \rho_t + (\rho u)_x  & = 0 \\
	(\rho u)_t + (\rho u^2 + p)_x & = 0 \\
	E_t + (u (E + p) )_x & = 0.
\end{align*}
Here $\rho$, $u$, $E$ and $p$ denote the density,  velocity, total energy and pressure, respectively.
The fluid is an ideal gas, thus the pressure is given by $p=\rho(\gamma - 1)e$, where $e$ is internal energy
and $\gamma = 1.4$.
The domain is the unit interval with $\Delta x = 1/512$, and we solve the Woodward--Colella blast-wave
problem \cite{WoodwardCollelablast} with initial conditions
\begin{align*}
\rho(x,0) = 1, \quad v(x,0) = 0, \quad \text{ and } \quad p(x,0) = \begin{cases} 
																					1000 & \mbox{if } 0 \le x < 0.1 \\
																					0.01 & \mbox{if } 0.1 \le x < 0.9 \\
																					100 & \mbox{if } 0.9 \le x \le 1
																				  \end{cases}.
\end{align*}
The initial conditions consist of two discontinuities. 
Reflecting boundary conditions are used and initial conditions lead to strong shock waves,
and rarefactions that eventually interact with each other.
The second-order TVD semi-discretization is used with both the second- and third-order methods and the
solution is computed at time $t = 0.04$, after the two shock waves have collided.

Figure~\ref{stepsize_woodward_colella_1d} shows how the step size evolves over time.
The maximum wave speed decreases as the shock waves approach each other, then increases when they
collide, and finally decreases when the shocks move apart.
The step size exhibits exactly the opposite behavior since it is inversely proportional to the maximum wave
speed.
As before, the CFL number $\cfl_n$ remains close to $1/4$ and $1/6$ for the second- and third-order LMMs,
respectively.
Note that these are the theoretical maximum values for which the characteristic variables remain TVD.
For this problem $s \approx 0.76$.

\begin{figure*}[!ht]
	\centering
	\subfloat[Burgers' equation]{\label{stepsize_burgers_1d}
	\defaultgraphic % insert burgers' plot as the default size figure
	\hspace*{5pt}
	\includegraphics[height=\graphicheight]{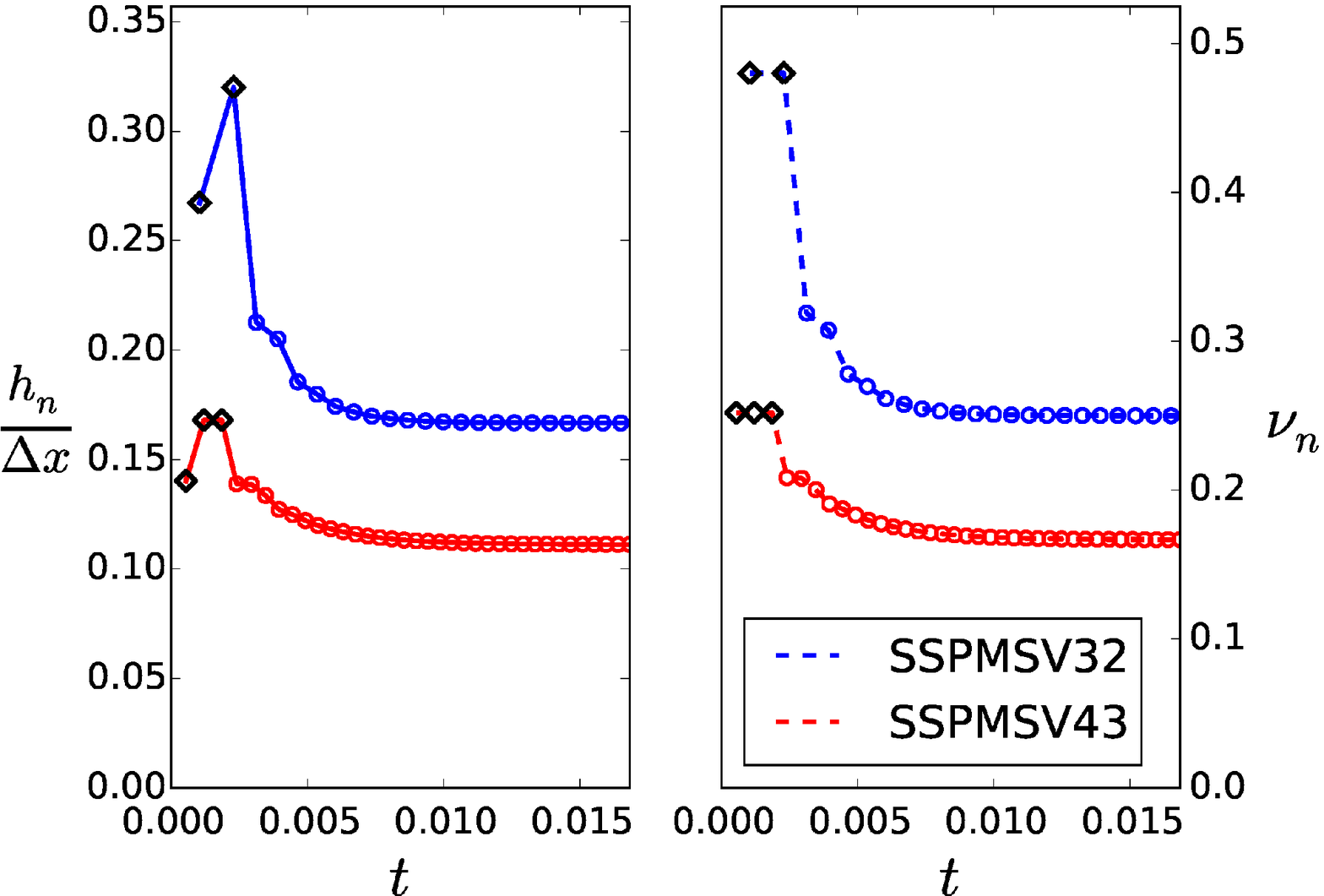}}
	\newline
	\subfloat[Woodward--Colella blast-wave problem]{\label{stepsize_woodward_colella_1d}
	\includegraphics[height=\graphicheight]{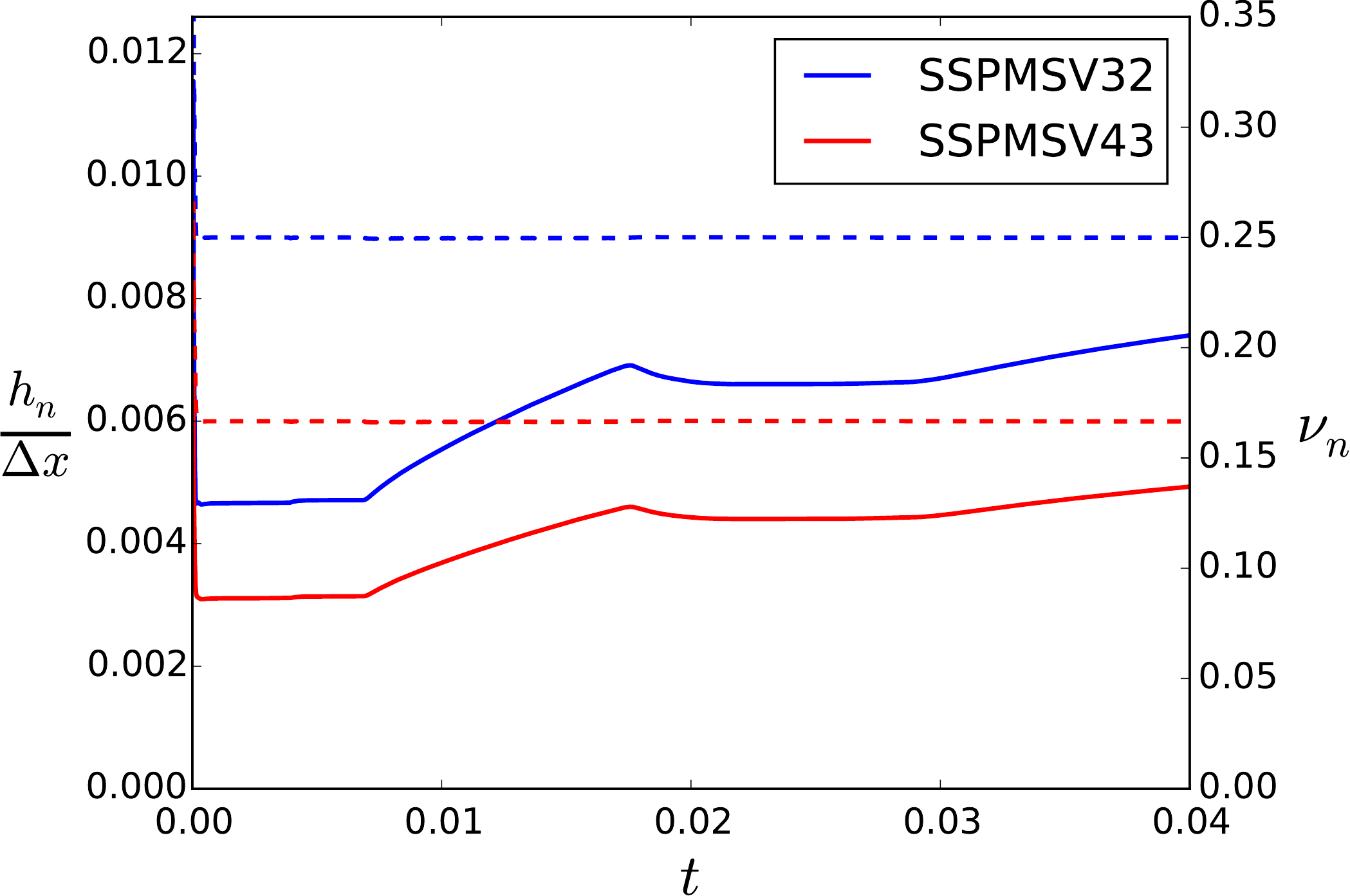}
	\hspace*{-1pt}
	\includegraphics[height=\graphicheight]{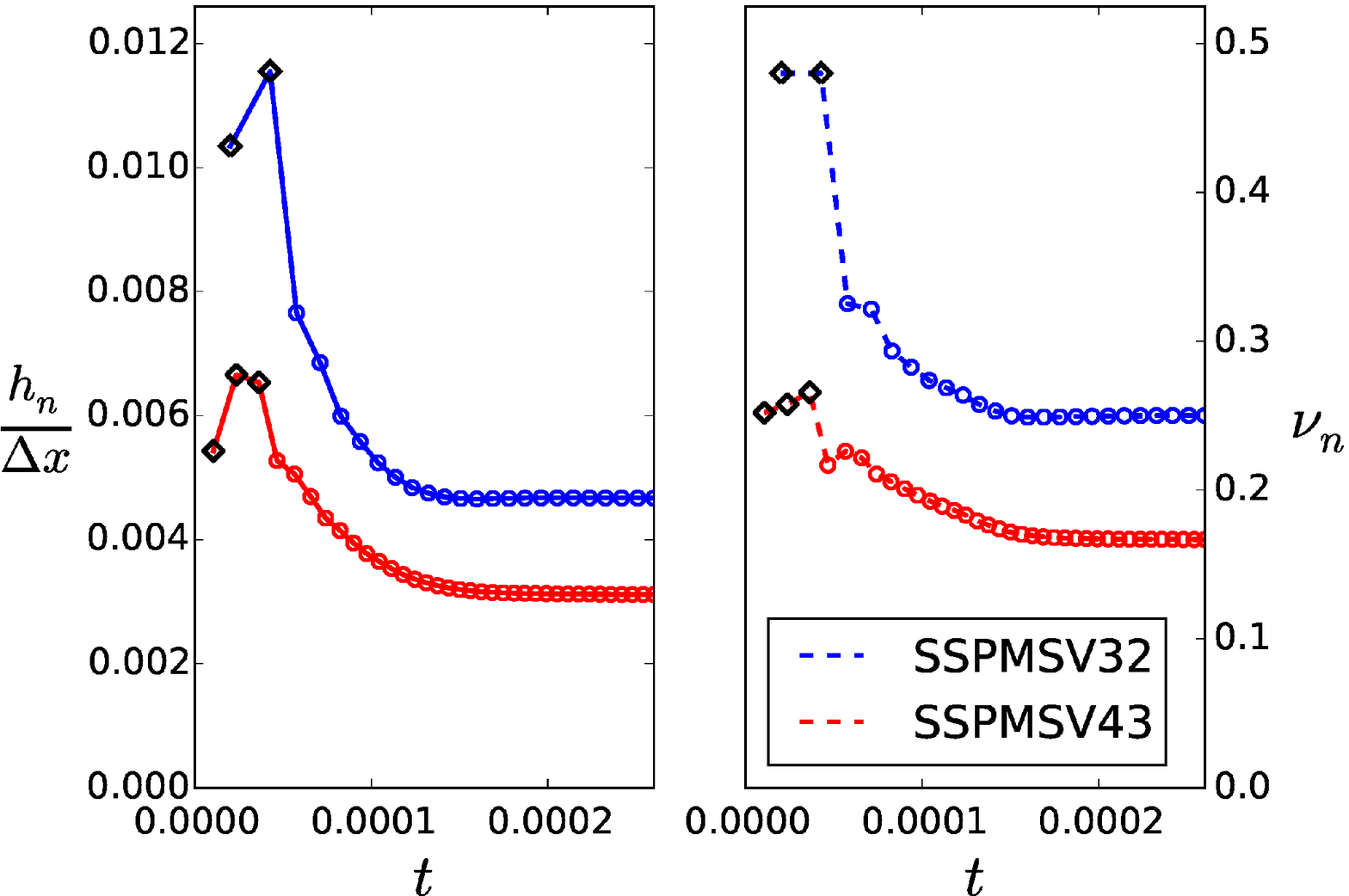}}
	\caption{Evolution of step size for one-dimensional Burgers' and Euler equations with
	second- and third-order linear multistep methods (solid lines).
    The dashed curves indicate the CFL number for each method.
    Figures on the right show a close-up view of the step size and CFL number at early times.
    The black marks indicate the starting Runge--Kutta method's step sizes.}
    \label{stepsize_plots_1D}
\end{figure*}

\subsection{Two-dimensional shallow-water flow}\label{subsec:SW2D}
\begin{sloppypar}
Consider the two-dimensional shallow-water equations
\begin{align*}
	 h_t + (hu)_x + (h v)_y & = 0 \\
	(hu)_t + (h u^2 + \frac{1}{2}gh^2)_x + (huv)_y & =0 \\
	(hv)_t + (huv)_x + (hv^2 + \frac{1}{2}gh^2)_y & = 0,
\end{align*}
\end{sloppypar}\noindent
where $h$ is the depth, $(u, v)$ the velocity vector and $hu$, $hv$ the momenta in each direction.
The gravitational constant $g$ is equal to unity.
The domain is a square $[-2.5, 2.5] \times [-2.5, 2.5]$ with $250$ grid cells in each direction.
The initial condition consists of an uplifted, inward-flowing 
%annular region 
cylindrically symmetric perturbation
given by
\begin{align*}
h(x,y,0) = 1+ g(x,y), \quad
u(x,y,0) = -x g(x,y) \quad \text{ and } \quad
v(x,y,0) = -y g(x,y),
\end{align*}
where $g(x,y) := e^{-10\left(\sqrt{x^2+y^2}-1\right)^2}$.
We apply reflecting boundary conditions at the top and right boundary, whereas the bottom and left
boundaries are outflow.
The wave is initially directed towards the center of the domain, and a large peak forms as it converges
there, before subsequently expanding outward.
We compute the solution up to time $t = 2.5$, after the solution has been reflected from the top and right
boundaries.
Figure~\ref{stepsize_radial_shallow_water_2d} shows how the step size varies in time.
It decreases as the initial profile propagates towards the center of the domain since the maximum wave
speed increases.
As the solution expands outwards, higher step sizes are allowed. 
Note that at time $t \approx 2$ the wave hits the boundaries, so a small decrease in the allowed step size is
observed.
Again, the dashed curves indicate the variation of the CFL number.
Now we have the ratio $s \approx 0.62$ for both second- and third-order methods.

\subsection{Shock-bubble interaction}\label{subsec:shockbubble}
Finally, we consider the Euler equations of compressible fluid dynamics in cylindrical coordinates.
Originally, the problem is three-dimensional but can been reduced to two dimensions by using cylindrical
symmetry.  The resulting system of equations is

\begin{align*}
	 \rho_t + (\rho u)_z + (\rho v)_r & = - \frac{\rho v}{r} \\
	(\rho u)_t + (\rho u^2 + p)_z + (\rho uv)_r & = -\frac{\rho u v}{r} \\
	(\rho v)_t + (\rho uv)_z + (\rho v^2 + p)_r & = - \frac{\rho v^2}{r} \\
	(\rho E)_t + (u (\rho E + p) )_z + (v (\rho E + p))_r & = - \frac{(\rho E + p) v}{r}.
\end{align*}
Here, $\rho$ is the density, $p$ is the pressure, $E$ is the total energy, while $u$ and $v$ are the $z$- and
$r$-components of the velocity.
The $z$- and $r$-axis are parallel and perpendicular, respectively, to the axis of symmetry.
The problem involves a planar shock wave traveling in the $z$-direction that impacts a spherical low-density
bubble.
The initial conditions are taken from \cite{Ketcheson2012pyclaw}.
We consider a cylindrical domain $[0, 2] \times[0, 0.5]$ using a $640 \times 160$ grid and impose reflecting
boundary conditions at the bottom of the domain and outflow conditions at the top and right boundaries.

Since the fluid is a polytropic ideal gas, the internal energy $e$ is proportional to the temperature.
Initially the bubble is at rest, hence there is no difference in pressure between the bubble and the 
surrounding air.
Therefore, the temperature inside the bubble is high resulting in large sound speed 
$c = \sqrt{\gamma(\gamma-1)e}$, where $\gamma = 1.4$.
Consequently, this results in reduced step sizes right after the shock wave hits the bubble at 
$t \approx 0.05$, as shown in Figure~\ref{stepsize_shock_bubble_interaction_2d}.
The efficiency ratio $s$ is about $0.82$. 

\begin{figure*}[!ht]
	\centering
	\subfloat[Shallow-water equations]{\label{stepsize_radial_shallow_water_2d}
	\includegraphics[height=\graphicheight]{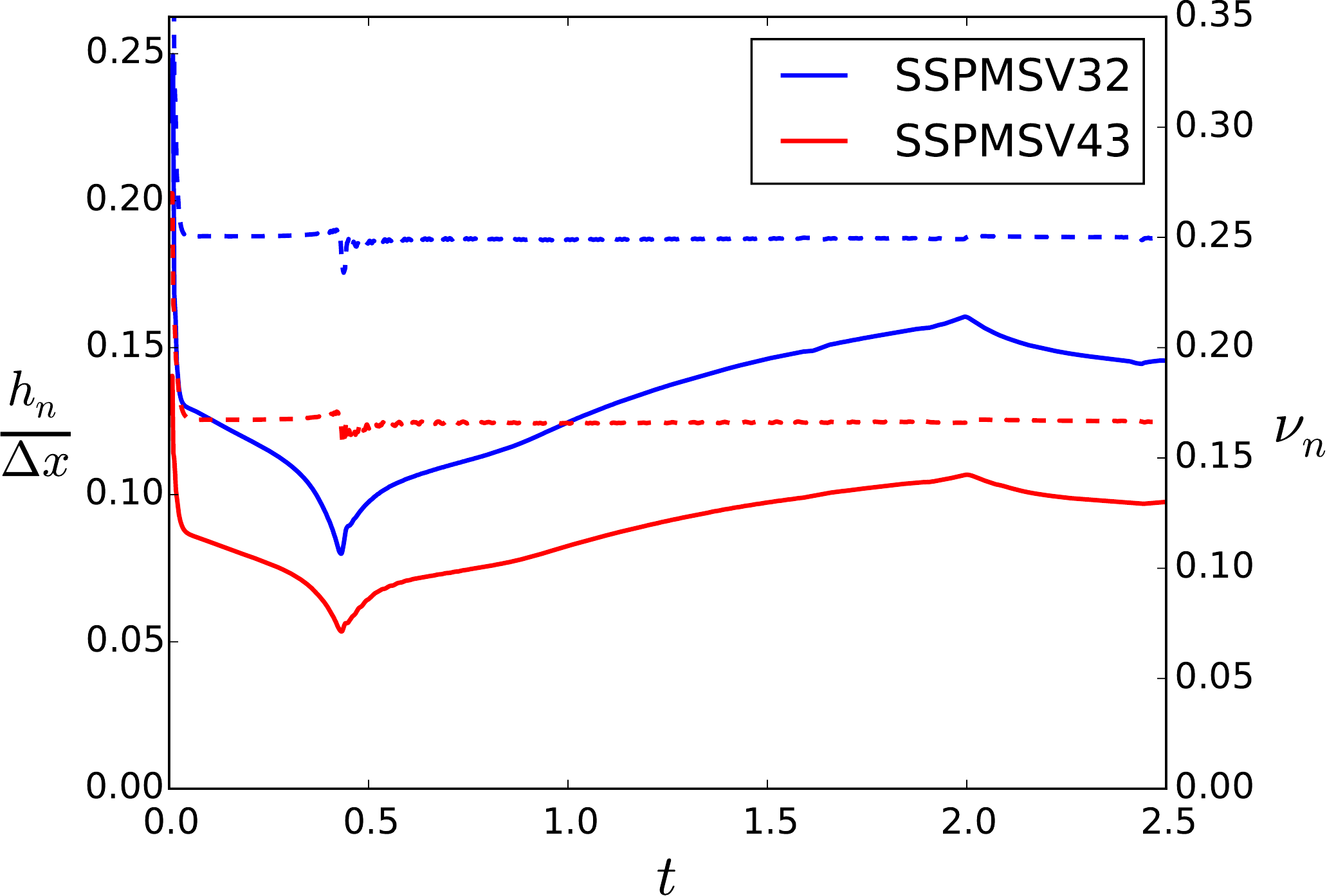}
	\hspace*{5pt}
	\includegraphics[height=\graphicheight]{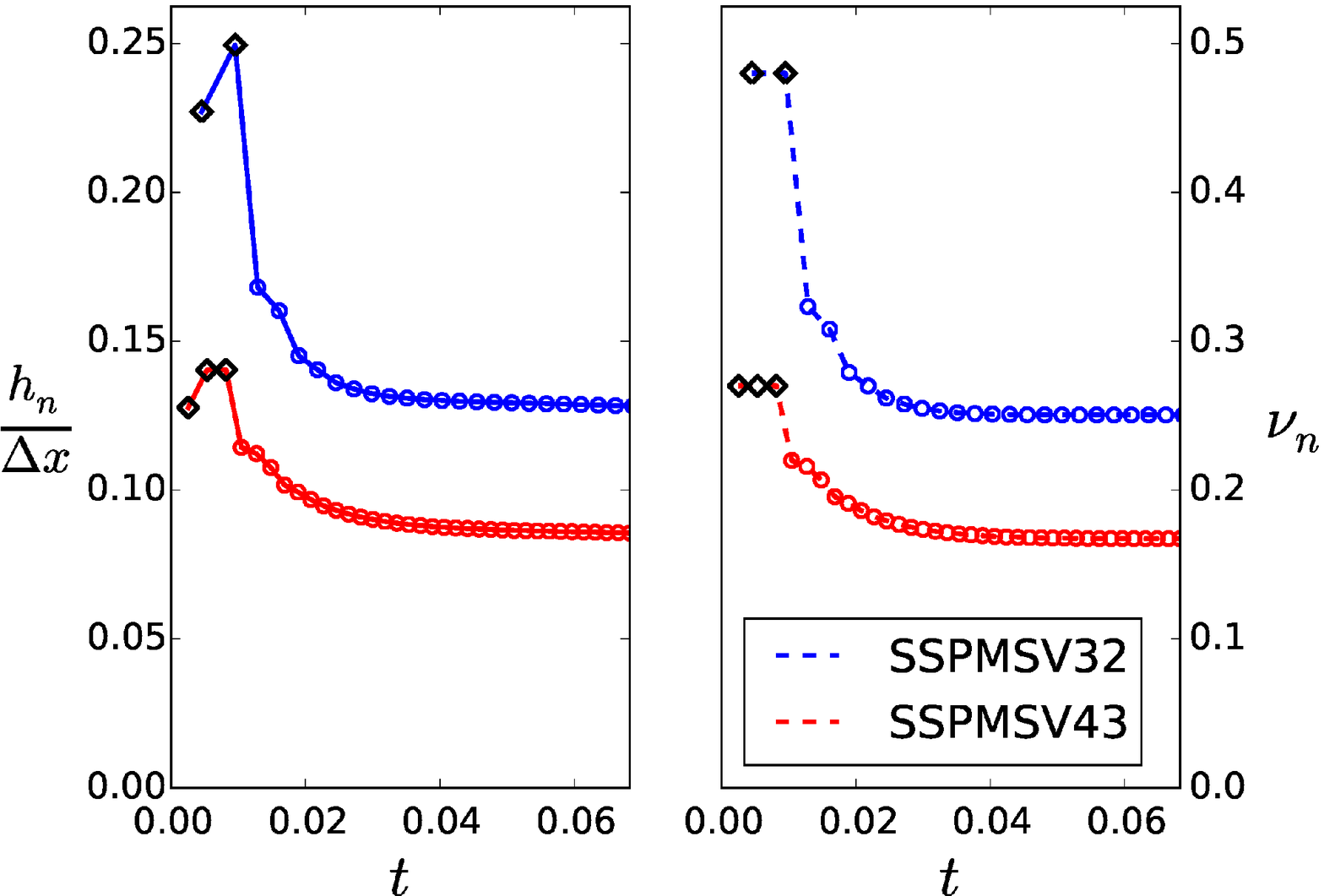}}
	\newline
	\subfloat[Shock-bubble interaction problem]{\label{stepsize_shock_bubble_interaction_2d}
	\includegraphics[height=\graphicheight]{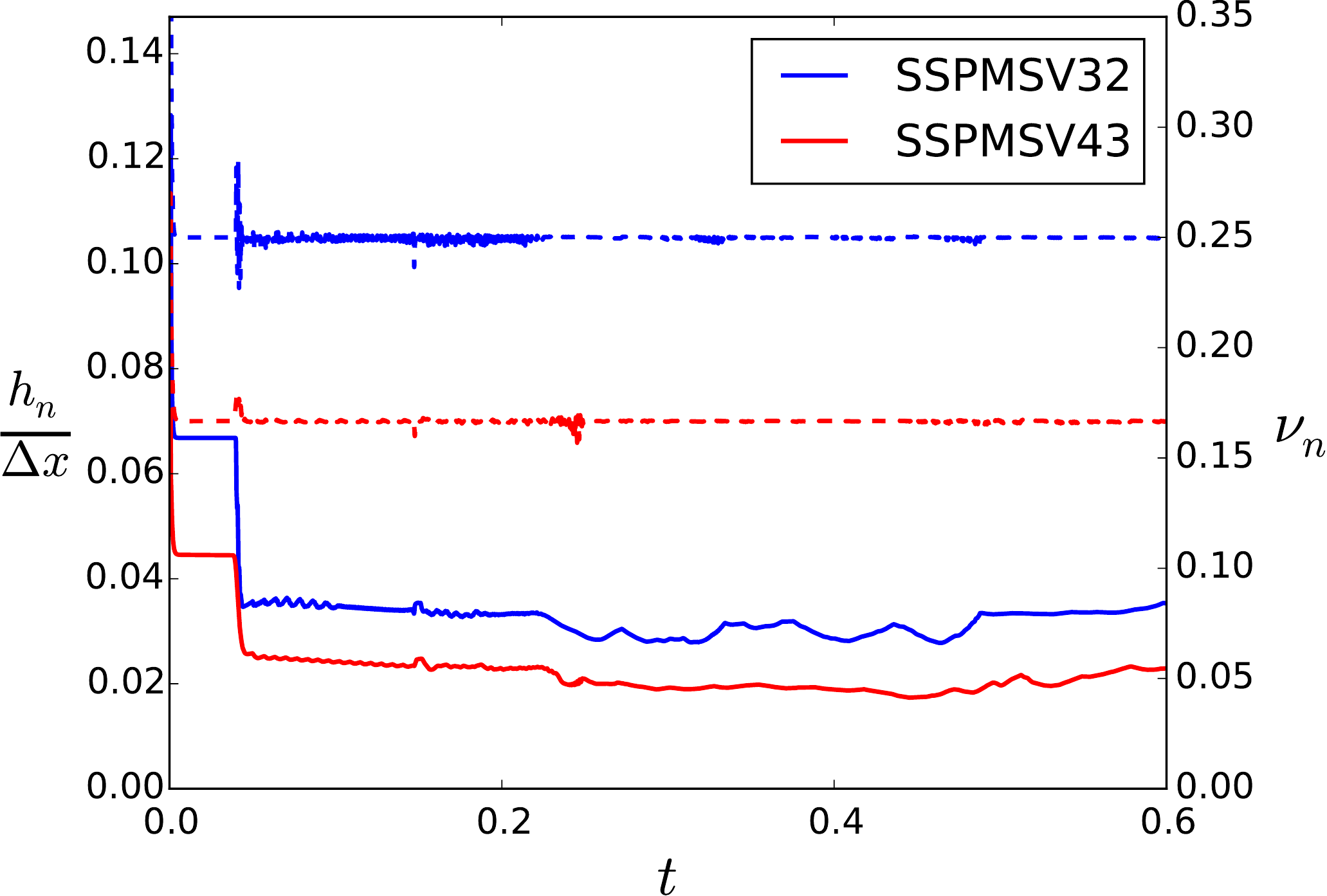}
	\hspace*{5pt}
	\includegraphics[height=\graphicheight]{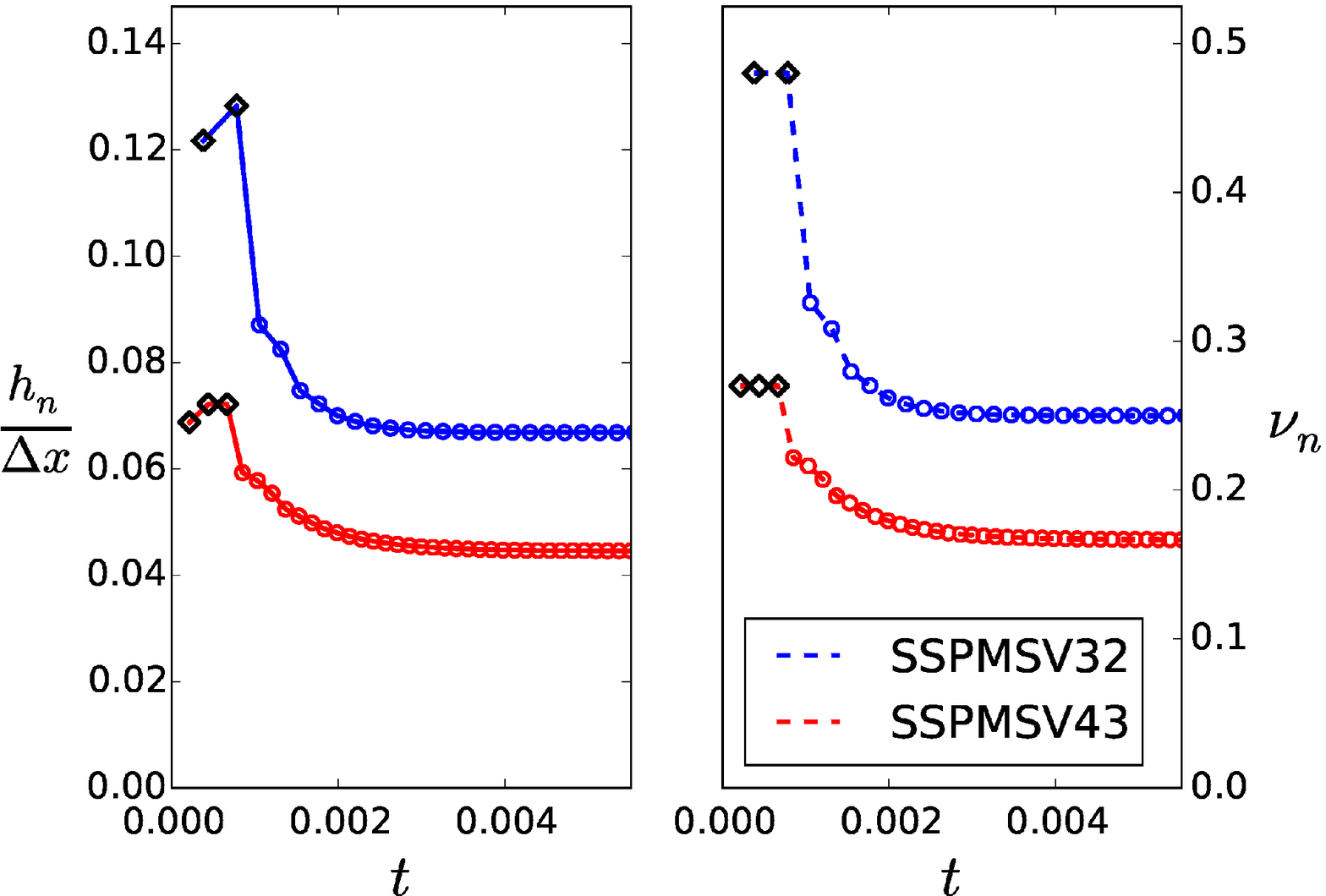}}
	\caption{Evolution of step size for two-dimensional shallow-water and Euler equations with
	second- and third-order linear multistep methods (solid lines).
	The dashed curves indicate the CFL number for each method.
	Figures on the right show a close-up view of the step size and CFL number at early times.
    The black marks indicate the starting Runge--Kutta method's step sizes.}
    \label{stepsize_plots_2D}
\end{figure*}

\section{Conclusions}
The methods presented in this paper are the first adaptive multistep methods
that provably preserve any convex monotonicity property satisfied under 
explicit Euler integration.  We have provided a step-size selection 
algorithm---yielding an optimal step-size sequence $h_n$---that strictly enforces this property while ensuring that the step size is
bounded away from zero.  The methods are proved to converge at the expected rate
for any ODE system satisfying the forward Euler condition.

As suggested by Theorems~\ref{hnconvergencetheorem} and
~\ref{hnconvergencetheorem3rdorder}, for all tests the CFL number remains close to $\frac{k-p}{k-1}\cflFE$,
where $k$ and $p$ are the steps and order of the method, respectively.
The numerical results verify that the step size is approximately inversely proportional to the maximum wave
speed and the proposed step-size strategy successfully chooses step sizes closely related to the maximum
allowed CFL number.
In practice, we expect that conditions \eqref{hferestriction} and
\eqref{hjrestriction}, which were introduced only as technical assumptions for
some of our theoretical results, could usually be omitted without impacting the
solution.

The methods presented herein are of orders two and three.
We have proved the existence of methods of arbitrary order. However, the optimal methods
for orders at least 3 seem to have a complicated structure.
It would be useful to develop adaptive (and possibly suboptimal) multistep SSP methods of order higher
than three having a simple structure.
%There is no theoretical reason why this cannot be done, but the approach
%used in the present work leads to systems of equations and inequalities
%with no straightforward symbolic solution.

\section{The proofs of the theorems in Section \ref{sec:exandupper}\label{sec:proofs0}}
We present three lemmas, Lemma \ref{lem:dual} below, Lemma \ref{lem:farkas} in Section \ref{sec:proof0Thm2proof}
and Lemma \ref{lem:dualoptimal} in Section \ref{sec:proof0Thm3proof}, that are used in the 
proofs of Theorems \ref{thm:upperbound}-\ref{sec3Thm5} in Sections \ref{sec:proof0Thm2proof}-\ref{sec:proof0Thm5proof}.
Lemmas \ref{lem:dual}-\ref{lem:dualoptimal} are straightforward generalizations of the corresponding
results of \cite{LMMpaper} 
for variable step-size formulae, hence their proofs are omitted here.
\begin{lem}
  \label{lem:dual}
  Let $r \ge 0$ be arbitrary and let $p$ and $n\ge k$ be arbitrary positive integers.
  Suppose that some time-step ratios $\omega_{j,n}>0$ are given.
  Then the following two statements are equivalent.
  \begin{enumerate}
    \item[(i)]
      For all formulae with $k$ steps and order of accuracy $p$ we have
      $\sspcoeff_n(\omega, \delta, \beta) \leq r$.
    \item[(ii)]
      There exists a non-zero real univariate polynomial $q_n$ of degree at most $p$, that satisfies the conditions
      \begin{subequations} \label{dualconditions}
        \begin{align}
          q_n(\Omega_{j,n}) &\geq 0 & \text{ for all } 0 \leq j \leq k-1, \label{dualconditions:a}\\
          q_n'(\Omega_{j,n})+rq_n(\Omega_{j,n}) &\geq 0 & \text{ for all } 0 \leq j \leq k-1, \label{dualconditions:b}\\
          q_n(\Omega_{k,n}) &= 0. \label{dualconditions:c}
        \end{align}
      \end{subequations}
  \end{enumerate}
  Furthermore, if statement $(i)$ holds, then the polynomial $q_n$ can be
chosen to satisfy the following conditions.
  \begin{enumerate}[leftmargin=35pt]
    \item The degree of $q_n$ is exactly $p$.
    \item The real parts of all roots of $q_n$ except for $\Omega_{k,n}$ lie in the
      interval $[\Omega_{0,n},\Omega_{k-1,n}]$.
    \item The set of all real roots of $q_n$ is a subset of $\{\Omega_{0,n},\Omega_{1,n},\ldots,\Omega_{k,n}\}$.
    \item For even $p$, $\Omega_{0,n}$ is a root of $q_n$ with odd multiplicity.
      For odd $p$, if $\Omega_{0,n}$ is a root of $q_n$, then its multiplicity is even.
    \item The multiplicity of the root $\Omega_{k,n}$ is one. For $1\leq j \leq k-1$, 
if $\Omega_{j,n}$ is a root of $q_n$,
      then its multiplicity is 2.
    \item The polynomial $q_n$ is non-negative on the interval $[\Omega_{0,n},\Omega_{k,n}]$.
  \end{enumerate}
\end{lem}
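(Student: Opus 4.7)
The plan is to recast the condition ``$\sspcoeff_n(\omega,\delta,\beta) \le r$ for every $k$-step, order-$p$ formula'' as the infeasibility of a family of linear programs in the non-negative variables $(\delta,\beta)$, then apply Farkas' lemma to extract a dual polynomial certificate $q_n$, and finally refine $q_n$ by perturbation arguments to obtain the structural properties (1)--(6). The whole scheme mimics the fixed-step-size proof in \cite{LMMpaper}, since the nodes $\Omega_{j,n}$ enter only as an increasing sequence of real numbers and equal spacing is never used.

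For the main equivalence, I would fix $r\ge 0$ and exploit that ``some formula has $\sspcoeff > r$'' is equivalent to feasibility, for some $r' > r$, of the linear system consisting of the order conditions \eqref{oc-vss-delta} in the non-negative variables $(\delta,\beta)$. Applying Farkas' lemma to each such $r'$ yields multipliers $y^{(r')} = (y_0^{(r')},\ldots,y_p^{(r')})$ such that the polynomial $q^{(r')}(x) := \sum_{m=0}^p y_m^{(r')} x^m$ satisfies $q^{(r')}(\Omega_{j,n})\ge 0$ and $r' q^{(r')}(\Omega_{j,n}) + (q^{(r')})'(\Omega_{j,n})\ge 0$ for $0\le j\le k-1$, together with $q^{(r')}(\Omega_{k,n}) < 0$. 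Normalizing $\|y^{(r')}\|=1$ and letting $r'\to r^+$ via a standard compactness argument on the unit sphere of $\mathbb{R}^{p+1}$, I would extract a non-zero limit polynomial $q_n$ satisfying \eqref{dualconditions:a}--\eqref{dualconditions:b} with $q_n(\Omega_{k,n})\le 0$; the equality \eqref{dualconditions:c} would then be enforced either by a local perturbation that adds a small multiple of a polynomial which vanishes at $\Omega_{k,n}$ and is non-negative at the interior nodes, or by invoking strong duality of the LP at $r^* := \sup \sspcoeff_n$ (where the primal value $0$ forces $q_n(\Omega_{k,n}) = 0$ at the dual optimum). The converse (ii)$\Rightarrow$(i) is immediate: pairing \eqref{oc-vss-delta} with $q_n$ yields the identity $q_n(\Omega_{k,n}) = \sum_j \bigl[\delta_j q_n(\Omega_{j,n}) + \beta_j(r q_n(\Omega_{j,n}) + q_n'(\Omega_{j,n}))\bigr]$, and assuming a formula with $\sspcoeff > r$ would make the right side strictly positive while the left side is zero, a contradiction.

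For the structural conditions (1)--(6), I would select a $q_n$ of minimal degree and minimal number of real roots outside the node set among all dual certificates. Property (1) (degree exactly $p$) follows because a lower-degree certificate can be augmented by a small multiple of $\prod_{j=0}^{k}(x-\Omega_{j,n})$ without violating any constraint. Properties (2)--(3) (real parts of non-$\Omega_{k,n}$ roots lying in $[\Omega_{0,n},\Omega_{k-1,n}]$, and real roots lying only in the node set) follow by exhibiting local perturbations that nudge extraneous roots, using that the constraints are evaluated only at the discrete nodes. Properties (4)--(5) (multiplicity patterns at nodes) follow from a local sign analysis: an interior node where $q_n$ vanishes but $q_n\ge 0$ in a neighborhood must be a zero of even multiplicity, and the boundary nodes $\Omega_{0,n}$, $\Omega_{k,n}$ are handled by parity of the complementary factor's degree together with the required sign of $q_n$ at $-\infty$. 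Property (6) (non-negativity on $[\Omega_{0,n},\Omega_{k,n}]$) then follows from (2)--(5) by a direct root count.

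The main obstacle will be carrying out the perturbation bookkeeping in (1)--(6) with enough precision to verify that every modification of $q_n$ preserves all inequality constraints at the nodes and the equality at $\Omega_{k,n}$. This polynomial-theoretic counting argument is precisely the content of the proof in \cite{LMMpaper}; since those arguments rely only on the $\Omega_{j,n}$ forming an increasing real sequence, they carry over verbatim to the variable-step-size case, which is why the authors are justified in omitting the proof.
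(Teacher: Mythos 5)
Your strategy---recast the assertion as infeasibility of an LP family in $(\delta,\beta)$, apply Farkas' lemma, pass to the limit $r'\to r^+$, then refine the certificate---is exactly the route of \cite{LMMpaper} that the paper invokes, and the justification you give for the carry-over (the nodes $\Omega_{j,n}$ enter only as an increasing real sequence) is the same one the authors offer for omitting the proof. Two steps of your sketch, however, are not correct as stated.

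The direction (ii)$\Rightarrow$(i) is \emph{not} immediate. The pairing identity
\begin{equation*}
q_n(\Omega_{k,n}) = \sum_{j=0}^{k-1}\Bigl[\delta_{j}\, q_n(\Omega_{j,n}) + \beta_{j}\bigl(r\, q_n(\Omega_{j,n}) + q_n'(\Omega_{j,n})\bigr)\Bigr]
\end{equation*}
is correct, but when $q_n(\Omega_{k,n})=0$ a formula with $\sspcoeff>r$ does \emph{not} make the right-hand side strictly positive; since every summand is nonnegative, the identity only forces each summand to vanish. That in turn forces every node carrying a nonzero $\alpha_{j}$ to be a root of $q_n$ and every node carrying a nonzero $\beta_{j}$ to be a double root. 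To finish you must combine this complementary slackness with a root count ($\deg q_n\ge 1+|A|+|B|$, where $A,B$ are the supports of $\alpha,\beta$, whence $|A|+|B|\le p-1$) and a confluent Vandermonde rank argument showing that the $p+1$ order conditions cannot be satisfied on so small a support. This is a genuine step, not a one-line contradiction.

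Separately, adding a polynomial that \emph{vanishes} at $\Omega_{k,n}$ leaves $q_n(\Omega_{k,n})$ unchanged, so it cannot upgrade the inequality $q_n(\Omega_{k,n})\le 0$ obtained in the limit to the equality \eqref{dualconditions:c}. The correct move---in essence the same constant-shift the paper uses in Step~2 of its proof of Theorem~\ref{thm:upperbound}---is to add the nonnegative constant $-q_n(\Omega_{k,n})$: this produces $q_n(\Omega_{k,n})=0$ and, because $r\ge 0$, preserves both \eqref{dualconditions:a} and \eqref{dualconditions:b}. Your alternative appeal to LP strong duality is plausible but left too vague to carry the argument by itself. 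The sketch of Properties~1--6 is reasonable and does carry over from \cite{LMMpaper} as you say.
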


The following observation will be useful in the proofs. 
  If $q_n(\Omega_{j,n}) = 0$, then the inequality
  \eqref{dualconditions:b} for this index $j$ simplifies to
   $q_n'(\Omega_{j,n}) \geq 0$.
  Otherwise, if $q_n(\Omega_{j,n}) \ne 0$, then the inequality
  \eqref{dualconditions:b} for this index $j$ can be written by using
the logarithmic derivative of $q_n$ as
  \begin{equation*}
    \frac{q_n'(\Omega_{j,n})}{q_n(\Omega_{j,n})} = \sum\limits_{\ell=1}^{p}\frac{1}{\Omega_{j,n}-\lambda_{\ell,n}} \geq -r,
  \end{equation*}
where the $\lambda_{\ell,n}$ numbers are the complex (including real) roots of $q_n$, 
and $q_n$ has been chosen such that its degree is exactly $p$. 

\subsection{The proof of Theorem \ref{thm:upperbound}}\label{sec:proof0Thm2proof}
The following lemma will be used in the second step of the proof of Theorem~\ref{thm:upperbound}.
\begin{lem}\label{lem:farkas}
  Let $r > 0$ be arbitrary and let $p$ and $n\ge k$ be arbitrary positive integers.
  Suppose that some time-step ratios $\omega_{j,n}>0$ are given.
  Then exactly one of the following statements is true.
  \begin{enumerate}
    \item[(i)] There is a formula with $k$ steps, order of accuracy $p$ and
      $\sspcoeff_n(\omega, \delta, \beta) \ge r$.
    \item[(ii)] There is a real univariate polynomial $q_n$ of degree at most $p$,
      that satisfies the conditions \eqref{dualconditions:a}, \eqref{dualconditions:b}
      and
      \begin{equation}\label{dualconditions:c:farkas}
        q_n(\Omega_{k,n}) < 0.
      \end{equation}
  \end{enumerate}
\end{lem}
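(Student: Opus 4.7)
The plan is a direct application of Farkas' lemma (theorem of alternatives) to the linear feasibility problem encoded by the order conditions \eqref{oc-vss-delta} together with the non-negativity requirements $\delta_j,\beta_j\ge 0$. Existence of a $k$-step, order-$p$ formula with $\sspcoeff_n(\omega,\delta,\beta)\ge r$ is, by the very definition of $\delta_j=\alpha_{j,n}-r\beta_{j,n}$ together with \eqref{sspcoeffdef}, equivalent to the existence of $(\delta,\beta)\in\mathbb{R}_{\ge 0}^{2k}$ solving \eqref{oc-vss-delta}. Thus (i) is a standard ``$A x=b$, $x\ge 0$'' feasibility problem, and the classical Farkas alternative asserts that it fails iff its transposed system has a separating dual certificate; the task is to identify that certificate with the polynomial $q_n$ described in (ii).

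First I would write the coefficient matrix $A\in\mathbb{R}^{(p+1)\times 2k}$ explicitly. Reading off \eqref{oc-vss-delta}, the $m$th row ($0\le m\le p$) has $\Omega_{j,n}^m$ in the $\delta_j$-column and $r\,\Omega_{j,n}^m+m\,\Omega_{j,n}^{m-1}$ in the $\beta_j$-column (with the convention $m\,\Omega_{j,n}^{m-1}=0$ for $m=0$), while the right-hand side vector is $b=(1,\Omega_{k,n},\Omega_{k,n}^2,\ldots,\Omega_{k,n}^p)^\top$. I would then invoke Farkas' lemma: exactly one of the following holds---either $A(\delta,\beta)^\top=b$ has a solution with $(\delta,\beta)\ge 0$, or there exists $y=(c_0,c_1,\ldots,c_p)^\top\in\mathbb{R}^{p+1}$ with $A^\top y\ge 0$ componentwise and $b^\top y<0$.

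Next I would translate $y$ into a polynomial by setting
\[
q_n(x):=\sum_{m=0}^{p}c_m x^m,
\]
so $\deg q_n\le p$ and
\[
q_n(\xi)=\sum_{m=0}^{p}c_m\xi^m,\qquad q_n'(\xi)=\sum_{m=1}^{p}m\,c_m\xi^{m-1}.
\]
A short computation then shows that the $\delta_j$-column constraint of $A^\top y\ge 0$ is exactly $q_n(\Omega_{j,n})\ge 0$ (i.e.\ \eqref{dualconditions:a}), the $\beta_j$-column constraint is $r\,q_n(\Omega_{j,n})+q_n'(\Omega_{j,n})\ge 0$ (i.e.\ \eqref{dualconditions:b}), and the scalar inequality $b^\top y<0$ is $q_n(\Omega_{k,n})<0$ (i.e.\ \eqref{dualconditions:c:farkas}). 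Hence Farkas' alternative becomes precisely the asserted dichotomy between (i) and (ii).

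The main (quite mild) obstacle is the bookkeeping in the previous step: one must verify carefully that the factor $m\,\Omega_{j,n}^{m-1}$ produced by differentiating $x^m$ matches the coefficient of $\beta_j$ in the $m$th order condition, and that $\sum_{m=1}^p m\,c_m\Omega_{j,n}^{m-1}$ really equals $q_n'(\Omega_{j,n})$. No quantitative estimates or combinatorial arguments are needed; once the polynomial dictionary is in place, the result is immediate from the standard Farkas lemma, exactly as in the fixed-step-size treatment of \cite{LMMpaper}.
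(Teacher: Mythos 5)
Your proposal is correct and matches the approach the paper intends: the paper omits this proof, stating that Lemma~\ref{lem:farkas} is a straightforward generalization of the corresponding fixed-step-size result in \cite{LMMpaper}, which is established exactly by this Farkas-alternative argument. Your polynomial dictionary (the $\delta_j$-columns giving $q_n(\Omega_{j,n})\ge 0$, the $\beta_j$-columns giving $rq_n(\Omega_{j,n})+q_n'(\Omega_{j,n})\ge 0$, and $b^\top y<0$ giving $q_n(\Omega_{k,n})<0$) is the correct translation of the transposed system.
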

\begin{proof}[The proof of Theorem \ref{thm:upperbound}] 

\textbf{Step 1.}
To  prove the bound \eqref{boundonComegadeltabeta}, 
 we first consider the $k\ge 2$ case, and let $q_n$ denote the polynomial
\begin{equation}\label{Thm2qndef} 
q_n(x) := x^{p-1}(\Omega_{k,n}-x).
\end{equation}
%\begin{itemize}
\noindent \quad $\bullet$ If $\Omega_{k,n}>p$, this $q_n$ satisfies conditions \eqref{dualconditions} with  $r:=
\left({\Omega_{k,n}-p}\right)/\left({\Omega_{k,n}-1}\right)$, since
for each $1 \leq j \leq k-1$ we have
\[
%  \begin{align*}
    \frac{q_n'(\Omega_{j,n})}{q_n(\Omega_{j,n})} = \frac{p-1}{\Omega_{j,n}}-\frac{1}{\Omega_{k,n}-\Omega_{j,n}}
    \geq \frac{p-1}{\Omega_{k-1,n}}-\frac{1}{\Omega_{k,n}-\Omega_{k-1,n}} = -\frac{\Omega_{k,n}-p}{\Omega_{k,n}-1},
\]
%    & ,
% \end{align*}
and $q_n'(\Omega_{0,n})\equiv q_n'(0)\ge 0$.

\noindent \quad $\bullet$ If $\Omega_{k,n}\le p$, the conditions \eqref{dualconditions} hold with $r:=0$, because for $x\in [\Omega_{0,n},\Omega_{k-1,n}]\equiv [0,\Omega_{k,n}-1]$ we have
\[
q_n'(x)\ge x^{p-2} \left((p-1) \Omega_{k,n}-p \left(\Omega_{k,n}-1\right)\right)=
x^{p-2} \left(p-\Omega_{k,n}\right)\ge 0.
\]
For $k=1$, the same $q_n$ as in \eqref{Thm2qndef} satisfies
\eqref{dualconditions} with $r:=0$ and $\Omega_{k,n}=1$. 
Then, in each case, \eqref{boundonComegadeltabeta} is an immediate consequence of Lemma~\ref{lem:dual}.

\textbf{Step 2.} Suppose now 
\begin{equation}\label{Thm2Step2assumption}
\exists \text{ a } k\text{-step explicit linear multistep formula of order } p\ge 2 
\text{ with } \sspcoeff_n(\omega,\delta,\beta)>0. 
\end{equation}
We prove that there is a $k$-step formula of order $p$ whose SSP coefficient is equal to the optimal 
one.

Let us set $\Hh:=\{r> 0 : \text{the statement } (ii) \text { of Lemma \ref{lem:farkas}\, holds}\}$.
Step 1 implies that the largest possible 
$\sspcoeff_n(\omega,\delta,\beta)$ for all $k$-step formulae of order $p$
and having the given time-step ratios $\omega_j$ is finite. So
$\Hh\ne\varnothing$ by Lemma~\ref{lem:farkas}. We also see that
$\Hh$ is an infinite interval, because if a polynomial $q_n$ satisfies
 the conditions \eqref{dualconditions:a}, \eqref{dualconditions:b} with some $r := \rho> 0$, and
 \eqref{dualconditions:c:farkas}, then it satisfies the same
 conditions with any $r \in (\rho, +\infty)$. Thus, with a suitable $r^*\ge 0$, we have
$(r^*,+\infty)\subseteq \Hh\subseteq [r^*,+\infty)$. 
Clearly, $r^*>0$ due to assumption \eqref{Thm2Step2assumption} and Lemma~\ref{lem:farkas}.
We claim that $r^*\notin \Hh$.

Suppose to the contrary that $r^*\in \Hh$. Then there is a polynomial $q_n$
 satisfying the conditions \eqref{dualconditions:a}, \eqref{dualconditions:b} with $r := r^*$, and
 \eqref{dualconditions:c:farkas}. Now we define 
 $\widetilde{q}_n := q_n+\left| q_n(\Omega_{k,n})\right|/2$. One easily checks that this 
$\widetilde{q}_n$  polynomial satisfies the same set of conditions with 
\[
r:=\left(\frac{1+\max\limits_{0\le j\le k-1} q_n(\Omega_{j,n})}
{1+{\left|q_n(\Omega_{k,n})\right|}/{2}+\max\limits_{0\le j\le k-1} q_n(\Omega_{j,n})} \right)
\cdot r^* \in (0, r^*),
\]
so we would get $(0, r^*)\cap \Hh \ne \varnothing$, a contradiction.

Hence $\Hh= (r^*,+\infty)$. Therefore, in view of Lemma~\ref{lem:farkas}, $r^*$ is the optimal SSP coefficient and there exists an optimal method with $\sspcoeff_n(\omega, \delta, \beta) = r^*$.
\end{proof}

\subsection{The proof of Theorem \ref{thm:optimal2ndorder}}\label{sec:proof0Thm3proof}
In the following we apply the usual terminology and say that an inequality constraint 
is \textit{binding} if the inequality holds with equality. The lemma below
is used in the proofs of Theorems \ref{thm:optimal2ndorder} and \ref{thm:optimal3rdorder}.
\begin{lem}
  \label{lem:dualoptimal}
  Let $n\ge k$ be arbitrary positive integers and $p\ge 2$.
  Suppose that the time-step ratios $\omega_{j,n}>0$ are given, and that there
  exists an explicit linear multistep formula with $k$ steps, order of accuracy $p$, and
  $\sspcoeff_n>0$.
Let $\delta_{j,n}$ and $\beta_{j,n}$ denote the coefficients of a formula
with the largest SSP coefficient $\sspcoeff_n(\omega_n,\delta_n,\beta_n)\in  (0,+\infty)$. 
  Let $q_n$ be a polynomial that satisfies all the conditions of Lemma~\ref{lem:dual}
  with $r:=\sspcoeff_n(\omega_n,\delta_n,\beta_n)$.
  Then the following statements hold.
  \begin{enumerate}
    \item[(i)] If $\delta_{j,n} \ne 0$ for some $0\le j\le k-1$, then
        $q_n(\Omega_{j,n}) = 0$.
    \item[(ii)] If $\beta_{j,n} \ne 0$ for some $0\le j\le k-1$, then
        $q_n'(\Omega_{j,n})+rq_n(\Omega_{j,n}) = 0$.
    \item[(iii)]
      This $q_n$ can be chosen so that the total number of binding inequalities
      in \eqref{dualconditions:a}-\eqref{dualconditions:b} is at least $p$. 
  \end{enumerate}
\end{lem}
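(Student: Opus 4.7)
The plan is to prove all three parts as direct consequences of linear programming duality, viewing Lemma \ref{lem:dual} as the dual characterization of the LP whose primal is the order-condition feasibility problem. First I would fix $r:=\sspcoeff_n(\omega_n,\delta_n,\beta_n)>0$ and regard \eqref{oc-vss-delta} as a system of $p+1$ linear equalities in the $2k$ nonnegative variables $(\delta_j,\beta_j)_{j=0}^{k-1}$. By hypothesis this system is feasible at $r$, and by optimality any slightly larger value of $r$ yields infeasibility. Lemma \ref{lem:dual} then provides the corresponding dual certificate: the $p+1$ dual variables are the coefficients of a polynomial $q_n$ of degree at most $p$, the dual inequalities for the two blocks of primal variables are exactly \eqref{dualconditions:a} and \eqref{dualconditions:b}, and \eqref{dualconditions:c} is the dual equality encoding the right-hand-side of the primal.

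Parts (i) and (ii) then follow from complementary slackness at the primal-dual optimum. If $\delta_{j,n}\ne 0$, the primal nonnegativity constraint for $\delta_j$ is non-binding, so by complementary slackness the dual slack associated with it, namely $q_n(\Omega_{j,n})$, must vanish; this is (i). The identical argument applied to $\beta_{j,n}$ and to the dual slack $q_n'(\Omega_{j,n})+rq_n(\Omega_{j,n})$ yields (ii).

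For part (iii), I would appeal to the standard fact that the dual LP admits a basic optimal solution. Such a basic feasible solution has exactly $p+1$ linearly independent active constraints, which matches the number of dual variables (the coefficients of $q_n$, viewed as a polynomial of degree at most $p$). Since the equality \eqref{dualconditions:c} is always binding and contributes one such active constraint, at least $p$ of the $2k$ inequalities in \eqref{dualconditions:a}-\eqref{dualconditions:b} must be binding at this particular $q_n$.

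The only delicate point is making sure the basic dual optimum can be chosen to also satisfy the additional structural properties 1--6 listed in Lemma \ref{lem:dual} (exact degree $p$, root multiplicities, location and non-negativity on $[\Omega_{0,n},\Omega_{k,n}]$). This is handled exactly as in the fixed-step-size argument of \cite{LMMpaper}: starting from any dual-optimal $q_n$, one may multiply by suitable squared linear factors vanishing at the nodes $\Omega_{j,n}$ without destroying optimality or the binding-constraint count, and thereby enforce the additional properties. Modulo this verification, the proof is a routine transcription of the proof in \cite{LMMpaper}, with the integer abscissae $j$ replaced throughout by the partial sums $\Omega_{j,n}$.
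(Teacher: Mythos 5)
The paper itself does not prove Lemma~\ref{lem:dualoptimal}: at the start of Section~\ref{sec:proofs0} it states that Lemmas~\ref{lem:dual}--\ref{lem:dualoptimal} are ``straightforward generalizations of the corresponding results of \cite{LMMpaper}'' and omits the proofs. Your sketch therefore cannot be matched against an explicit argument in this paper, but it does reconstruct the LP-duality framework the authors are invoking. Your treatment of (i) and (ii) is correct and, once written out, self-contained: multiplying the $m$-th equation of \eqref{oc-vss-delta} by the coefficient of $x^m$ in $q_n$ and summing over $m=0,\ldots,p$ gives
\begin{equation*}
\sum_{j=0}^{k-1}\Bigl[\delta_{j,n}\,q_n(\Omega_{j,n}) + \beta_{j,n}\bigl(q_n'(\Omega_{j,n}) + r\,q_n(\Omega_{j,n})\bigr)\Bigr] = q_n(\Omega_{k,n}) = 0,
\end{equation*}
a sum of nonnegative terms which must therefore all vanish; this is exactly the complementary-slackness computation you describe.

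Your argument for (iii), however, has a genuine counting error. The dual feasible set $\{y\in\mathbb{R}^{p+1}: Cy\ge 0,\ b^Ty=0\}$ is a polyhedral \emph{cone}, so the only basic feasible solution in the classical vertex sense is $y=0$, which is excluded since we require $q_n\ne 0$. The relevant object is an extreme ray, and an extreme ray of a pointed cone in $\mathbb{R}^{p+1}$ is characterized by its tight constraints having rank $p$, not $p+1$. Since \eqref{dualconditions:c} is always tight and contributes one dimension to that rank, the generic extreme-ray argument only guarantees $p-1$ linearly independent binding inequalities among \eqref{dualconditions:a}--\eqref{dualconditions:b}, one short of the lemma's claim of $p$. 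That the actual count reaches $p$ (as one checks for $q_n(x)=x(\Omega_{k,n}-x)$ with $p=2$, or $q_n(x)=x^2(\Omega_{k,n}-x)$ with $p=3$) is not a consequence of basic-solution theory alone; it requires the structural Properties 4--5 of Lemma~\ref{lem:dual} governing the multiplicities of the roots of $q_n$ at the nodes $\Omega_{j,n}$ (a root at an interior node is forced to be double, giving two binding inequalities that are linearly dependent modulo the equality). Your closing hedge that the structural properties ``are handled exactly as in \cite{LMMpaper}'' is where the actual work of closing this gap would take place, but as stated the basic-solution assertion ``exactly $p+1$ linearly independent active constraints'' does not hold and does not deliver part (iii) on its own.
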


\begin{proof}[The proof of Theorem \ref{thm:optimal2ndorder}]
  The necessity of the condition $\Omega_{k,n} > 2$ for the existence of a second-order formula
  with positive SSP coefficient is an immediate consequence
  of Theorem~\ref{thm:upperbound}.
  On the other hand, we easily see that, up to a positive multiplicative constant, $q_n(x) = x(\Omega_{k,n}-x)$
is the unique 
  polynomial satisfying \eqref{dualconditions:a}, \eqref{dualconditions:c}, 
  and Properties 1 and 4 of Lemma~\ref{lem:dual}. If $\Omega_{k,n} > 2$, then $q_n$ does not
  satisfy \eqref{dualconditions:b} with $r:=0$ for $j=k-1$, therefore there exists a formula with $\sspcoeff_n > 0$ due to Lemma~\ref{lem:dual}.
  Thus the condition $\Omega_{k,n} > 2$ is also sufficient.

From now on we assume $\Omega_{k,n} > 2$, and that a formula with the optimal SSP
coefficient $\sspcoeff_n(\omega_n, \delta_n, \beta_n)$ is considered.
 From the above form of $q_n$ we see
that $q_n(\Omega_{j,n}) = 0$ ($0\le j\le k-1$) holds only for $j=0$, so the statement $(iii)$ of Lemma~\ref{lem:dualoptimal}
  guarantees that at least one of the inequalities in \eqref{dualconditions:b} is binding with $r:=\sspcoeff_n(\omega_n, \delta_n, \beta_n)$.
  Since $q_n'(\Omega_{0,n})+r q_n(\Omega_{0,n})=\Omega_{k,n}>0$
and for all $1 \le j \le k-2$ we have
\[
    \frac{q_n'(\Omega_{j,n})}{q_n(\Omega_{j,n})} = \frac{1}{\Omega_{j,n}}-\frac{1}{\Omega_{k,n}-\Omega_{j,n}}
    > \frac{1}{\Omega_{k-1,n}}-\frac{1}{\Omega_{k,n}-\Omega_{k-1,n}} = -\frac{\Omega_{k-1,n}-1}{\Omega_{k-1,n}},
\]
the unique binding inequality in \eqref{dualconditions:b} must be the one corresponding to the index $j=k-1$. But $q_n'(\Omega_{k-1,n})+r q_n(\Omega_{k-1,n}) =0$ implies 
$r=\left({\Omega_{k,n}-2}\right)/\left({\Omega_{k,n}-1}\right)$.

  Finally, the statements $(i)$ and $(ii)$ of Lemma \ref{lem:dualoptimal} guarantee that
  only $\delta_{0,n}$ and $\beta_{k-1,n}$ can differ from zero.  Solving the conditions for
  order two (see \eqref{oc-vss-delta}) then yields the unique formula stated in 
\eqref{VSSSSPLMMk2}.
\end{proof}

\subsection{The proof of Theorem \ref{thm:optimal3rdorder}}\label{sec:proof0Thm4proof}

First we present a lemma that 
%supersedes Lemma \ref{rkjdeflemma}, and 
will be used in 
the proof of Theorem \ref{thm:optimal3rdorder}. Moreover, Statement 2 of the lemma proves our
claim about the unique real root of the polynomial $P_{k+j}$ in \eqref{Pkjdef} under assumption 
\eqref{22cond}, hence guaranteeing that the quantity $r_{k+j}$ in \eqref{rhokjdef} has a 
proper definition. 
The $\Delta_{j}$ quantities defined in
\eqref{Deltadef} clearly satisfy the condition $1<\Delta_{j+1}<\Delta_{j}$ below.
\begin{lem}\label{lemma5} Let some $1<\Delta_{j+1}<\Delta_{j}$ numbers be given, and for any 
$\rho\in\mathbb{R}$ let
\begin{equation}\label{Ajmatrixdef}
A_{j}(\rho):=
 \begin{pmatrix}
                    \rho-2&\rho-1&\rho-3\\
                    \rho \Delta_{j}^2-2 \Delta_{j} &\rho \Delta_{j}-1 &\rho \Delta_{j}^3-3 \Delta_{j}^2\\
                    \rho \Delta_{j+1}^2-2 \Delta_{j+1}  & \rho \Delta_{j+1}-1 & \rho \Delta_{j+1}^3-3 \Delta_{j+1}^2
\end{pmatrix}
\begin{matrix}\vphantom{}\\ \vphantom{}\\\vphantom{}.\end{matrix}
\end{equation}
Then the following statements hold.
\begin{enumerate}
\item Every real root of the polynomial $P_{k+j}$ (appearing in \eqref{Pkjdef}) is positive.
\item Under condition \eqref{22cond}, $P_{k+j}$ has a unique real root.
\item If 
\begin{equation}\label{lemma5up}
\exists (r, a, b)\in\mathbb{R}^3 \text{ satisfying }
A_{j}(r)\cdot(a,b,1)^\top=0 \text{ and } a^2-4b<0,
\end{equation}
then
\begin{equation}\label{lemma5down}
\eqref{22cond}\quad \text{and}\quad P_{k+j}(r)=0
\end{equation}
hold.
\item If \eqref{lemma5down} holds with some $r\in\mathbb{R}$, then \eqref{lemma5up} is true, 
and the triplet $(r, a, b)\in\mathbb{R}^3$ is unique.
\end{enumerate}
\end{lem}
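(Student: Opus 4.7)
The plan is to reduce everything to one polynomial identity and one explicit parametrization of the kernel of $A_j(r)$. First I would verify that, as polynomials in $\rho$,
\[
\det A_j(\rho) \;=\; (\Delta_j-1)(\Delta_{j+1}-1)(\Delta_j-\Delta_{j+1})\,P_{k+j}(\rho),
\]
by matching the coefficient of $\rho^3$ and the value at $\rho=0$ on both sides (both are elementary $3\times 3$ determinant computations that factor cleanly). Since $1<\Delta_{j+1}<\Delta_j$, the scalar prefactor is nonzero, so $\det A_j(r)=0 \Leftrightarrow P_{k+j}(r)=0$; in particular $r=0$ is automatically excluded from \eqref{lemma5up} because $\det A_j(0)\neq 0$.

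The essential parametrization comes from introducing the auxiliary cubic $\tilde q(y):=y(y^2+ay+b)=y^3+ay^2+by$. A short calculation shows that for each $\Delta\in\{1,\Delta_j,\Delta_{j+1}\}$ the corresponding row of $A_j(\rho)(a,b,1)^\top=0$ is equivalent to the single scalar identity $\rho\,\tilde q(\Delta)=\tilde q'(\Delta)$. Hence \eqref{lemma5up} (without the sign condition) asserts that the degree-$3$ polynomial $\tilde q'(y)-r\tilde q(y)$ in $y$, whose leading coefficient is $-r$, vanishes at the three distinct points $1,\Delta_j,\Delta_{j+1}$, forcing
\[
\tilde q'(y) - r\tilde q(y) \;=\; -r(y-1)(y-\Delta_j)(y-\Delta_{j+1}).
\]
Matching the constant and $y^2$ coefficients in $y$ yields the \emph{unique} expressions
\[
b \;=\; r\,\Delta_j\Delta_{j+1}, \qquad a \;=\; \frac{3}{r} - (1+\Delta_j+\Delta_{j+1}),
\]
and matching the $y$-coefficient is exactly $P_{k+j}(r)=0$. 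This handles the uniqueness of the triplet $(r,a,b)$ in Statement~4 and gives a concrete recipe for $(a,b)$ in terms of~$r$.

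Statement~1 I would dispatch directly: $P_{k+j}(0)=-6<0$, and on $(-\infty,0]$ the derivative
\[
P_{k+j}'(x) \;=\; 3\Delta_j\Delta_{j+1}\,x^2 - 2(\Delta_j\Delta_{j+1}+\Delta_j+\Delta_{j+1})\,x + 2(\Delta_j+\Delta_{j+1}+1)
\]
is strictly positive, because all three summands are non-negative there and the constant term is strictly positive; hence $P_{k+j}$ is strictly increasing on $(-\infty,0]$ with $P_{k+j}(0)<0$, so every real root lies in $(0,+\infty)$.

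What remains is the sign analysis that connects the last piece of Statement~3 and Statement~4 to condition \eqref{22cond}, and the monotonicity/discriminant work for Statement~2. Substituting the formulas for $a$ and $b$ yields
\[
r^2(a^2-4b) \;=\; 9 - 6r(1+\sigma) + r^2(1+\sigma)^2 - 4r^3\pi, \qquad \sigma:=\Delta_j+\Delta_{j+1},\ \ \pi:=\Delta_j\Delta_{j+1},
\]
and using $P_{k+j}(r)=0$ to eliminate $r^3\pi$ reduces this to a polynomial in $r$ of degree two whose coefficients depend only on $(\Delta_j,\Delta_{j+1})$. The main obstacle is to show that the sign of this expression, evaluated at a real root of $P_{k+j}$, is negative precisely when \eqref{22cond} holds (and non-negative otherwise), by a case-split on which alternative of \eqref{22cond} is active; this can be spot-checked on examples such as $(\Delta_j,\Delta_{j+1})=(2,\frac{3}{2})$ (where \eqref{22cond} holds and $a^2-4b<0$) against $(\Delta_j,\Delta_{j+1})=(20,10)$ (where both fail). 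For Statement~2, I would compute the discriminant of the cubic $P_{k+j}$ as a polynomial in $(\Delta_j,\Delta_{j+1})$ and show that \eqref{22cond} implies it is $\leq 0$, which forces $P_{k+j}$ to have a unique real root. This purely algebraic verification is the most tedious part; the conceptual content of the proof is already contained in the first two paragraphs.
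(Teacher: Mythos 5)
Your structural insight — parametrizing the kernel condition via the auxiliary cubic $\tilde q(y)=y(y^2+ay+b)$ and recognizing each row of $A_j(\rho)(a,b,1)^\top=0$ as $\rho\,\tilde q(\Delta)-\tilde q'(\Delta)=0$ — is correct and genuinely different from (and arguably cleaner than) the paper's approach. The factorization $\tilde q'(y)-r\tilde q(y)=-r(y-1)(y-\Delta_j)(y-\Delta_{j+1})$ immediately yields the unique closed forms $a=\tfrac{3}{r}-(1+\Delta_j+\Delta_{j+1})$ and $b=r\Delta_j\Delta_{j+1}$, and the remaining coefficient match is precisely $P_{k+j}(r)=0$. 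This elegantly sidesteps the paper's need to treat the special cases $r=1$ and $\widetilde P_1(r)=0$ separately. Your determinant identity and Statement~1 argument (monotonicity on $(-\infty,0]$, $P_{k+j}(0)=-6<0$) are also correct, though the paper instead uses a sign-alternating coefficient observation.

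However, there is a genuine gap. Statements~2, 3, and 4 are not actually proved in your proposal — you set up the right reduction (eliminating $r^3\Delta_j\Delta_{j+1}$ via $P_{k+j}(r)=0$ to reduce $r^2(a^2-4b)$ to a quadratic in $r$), but you then defer to ``spot-checking examples'' and a discriminant computation you do not perform. The entire nontrivial content of the lemma — that the sign of $a^2-4b$ at a real root of $P_{k+j}$ is negative exactly when \eqref{22cond} holds, and that \eqref{22cond} forces a unique real root — lies in exactly this algebra. Examples verify plausibility but establish nothing; and the paper spends nearly two pages on eleven auxiliary polynomials $\widetilde P_1,\dots,\widetilde P_{11}$ and systematic variable elimination precisely because this verification is the hard part. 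Until that is carried out (either by the paper's elimination scheme applied to your simplified quadratic, or by a discriminant analysis of $P_{k+j}$), Statements~2--4 remain unproven. A further minor imprecision: a cubic discriminant being $\le 0$ does not force a unique real root (a zero discriminant can correspond to a double root plus a distinct simple root, i.e.\ two distinct real roots); you need strict negativity of the discriminant, or the paper's argument that $P_{k+j}'>0$ at every real root.
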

\begin{figure}
\begin{center}
\includegraphics[width=0.5\textwidth]{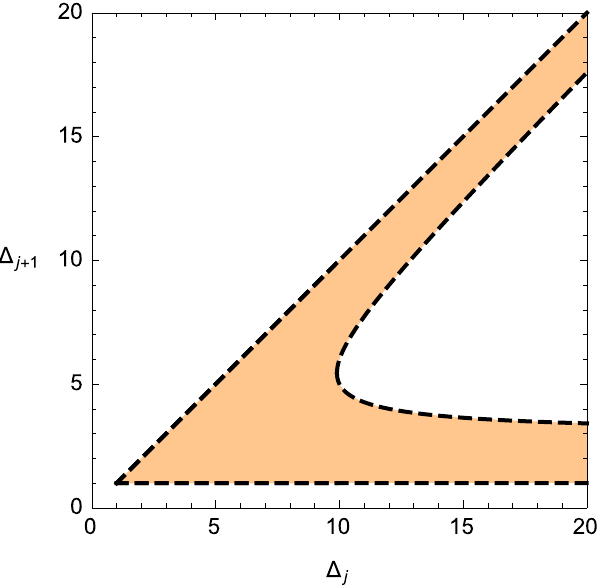}
\caption{The set defined by \eqref{22cond}.
\label{deltajfigure}}
\end{center}
\end{figure}
\begin{proof} Throughout the proof we always assume $1<\Delta_{j+1}<\Delta_{j}$ and $r\in\mathbb{R}$. First we define some auxiliary polynomials---their dependence on 
$\Delta _{j+1}$ is suppressed for brevity.
\begin{description}[labelindent=-4pt]
\item $\widetilde{P}_1(r):=(r^2-r)\Delta_{j+1}-r+2$,
\item $\widetilde{P}_2(r):=(r^2-2r)\Delta_{j+1}-2 r+6$,
\item $\widetilde{P}_3(r):=(r^2-r^3) \Delta _{j+1}^2+2 (r^2-2r) \Delta_{j+1}-2 r+6$,
\item $\widetilde{P}_4(r):=r^2 \left(r^2-4 r+2\right) \Delta _{j+1}^2-4 r \left(r^2-5 r+3\right) \Delta _{j+1}+2\left(r^2-6 r+6\right)$, 
\item $\widetilde{P}_5(r):=\left(-\left(5+2 \sqrt{6}\right) r^3+2 \left(3+\sqrt{6}\right) r^2-2r\right) \Delta_{j+1}+\left(5+2 \sqrt{6}\right) r^2-4 \left(3+\sqrt{6}\right) r+6$,
\item $\widetilde{P}_6(r):=(r-1) \Delta_{j+1}-r+3$,
\item $\widetilde{P}_7(r):=r^2\Delta_{j+1}^2-4 r \Delta_{j+1}+6$,
\item $\widetilde{P}_8(x, a, r):=(r^2-r) x^2+(r-1) (a r+r-3)x+a (2-r)-r+3$,
\item $\widetilde{P}_9(r):=(r-r^2) \Delta_{j+1}^2-\left(r^2-4 r+3\right) \Delta_{j+1}+r-3$,
\item $\widetilde{P}_{10}(r):=(r^2-r) \Delta_{j+1}-r+3$,
\item $\widetilde{P}_{11}(r):=r \Delta_{j+1}^2-(r+3) \Delta_{j+1}+1$.
\end{description}

\indent \textbf{Step 1.} Statement 1 follows from the fact that the coefficient of $x^m$ 
in $P_{k+j}(x)$ is positive for odd $m$ and negative for even $m$, so $P_{k+j}$ cannot have a non-positive root.

 \indent \textbf{Step 2.} We now prove Statement 2. Since $P_{k+j}$ is cubic, it has a real root. By taking into account Statement 1, we will show that
\begin{equation}\label{lem51}
\exists r>0\ : \  \eqref{22cond},\   P_{k+j}(r)=0\  \text{ and } \ P_{k+j}'(r)\le 0 
\end{equation}
is \textit{false}, implying that $P_{k+j}'>0$ at every real root of $P_{k+j}$, 
which is possible only if $P_{k+j}$ has a unique real root.

If $\widetilde{P}_1(r)=0$,  we obtain 
\begin{equation}\label{rdeltaj147}
(r, \Delta_{j+1})=(2-\sqrt{2}, 3+2 \sqrt{2})
\end{equation}
by using $P_{k+j}(r)=0$ also. But then one easily shows that \eqref{lem51} is impossible.

So we can suppose in the rest of Step 2 that $\widetilde{P}_1(r)\ne 0$. Then from $P_{k+j}(r)=0$ we get 
\begin{equation}\label{Deltajexpression47}
\Delta_j=\widetilde{P}_2(r)/({r\widetilde{P}_1(r)}).
\end{equation}
This form of $\Delta_j$ is substituted into \eqref{lem51} and we obtain
\begin{equation}\label{system47}
 r>0\quad\text {and}\quad 
{\widetilde{P}_3(r)}/{\widetilde{P}_1(r)}>0
\quad\text {and}\quad
\widetilde{P}_4(r)/\widetilde{P}_1(r)\le 0 \quad\text {and}
\end{equation}
\begin{equation}\label{system48}
\left( {\widetilde{P}_5(r)}/{{\widetilde{P}_1(r)}} <0 \quad\text {or}\quad  
{\widetilde{P}_6(r)\,\widetilde{P}_7(r)}/{\widetilde{P}_1(r)}>0 \right).
\end{equation}
Because all $\widetilde{P}_j$ polynomials ($1\le j\le 7$) are at most quadratic in $\Delta_{j+1}$, 
one can systematically reduce \eqref{system47}-\eqref{system48} to a system of univariate
polynomial inequalities in $r$, and verify that  \eqref{system47}-\eqref{system48} has no solution. This finishes the proof of Step 2.

\indent \textbf{Step 3.} To prove Statement 3, notice first that 
\begin{equation}\label{vector49}
A_{j}(r) \cdot (a,b,1)^\top = 0
\end{equation}
 implies 
\begin{equation}\label{Pkjzero}
0=\det A_{j}(r)\equiv \left(\Delta_j-1\right) \left(\Delta_{j+1}-1\right) \left(\Delta_j-\Delta_{j+1}\right) P_{k+j}(r),
\end{equation}
so we have $P_{k+j}(r)=0$. To show \eqref{22cond}, 
depicted in Figure \ref{deltajfigure}, we separate two cases. 

If $r=1$, then we obtain $a=-2$ from the first component of  \eqref{vector49}, and
$b=4 \Delta_j-\Delta_j^2$ with 
\begin{equation}\label{delta50}
\Delta_{j+1}=4-\Delta_j\quad\text{ and }\quad 2<\Delta _j<3
\end{equation}
from the other two components and from $1<\Delta_{j+1}<\Delta_{j}$. So 
 \eqref{22cond} holds.

We can thus suppose in the rest of Step 3 that $r\ne 1$. Then from \eqref{vector49} we get
\begin{equation}\label{b52eq}
b=\frac{(2-r)a-r+3}{r-1}
\end{equation}
and
\begin{equation}\label{P851}
\widetilde{P}_8(\Delta_{j}, a, r)=0=\widetilde{P}_8(\Delta_{j+1}, a, r).
\end{equation}
We again separate two cases.

If $\widetilde{P}_1(r)=0$, then by using \eqref{P851} and $1<\Delta_{j+1}<\Delta_{j}$ we obtain
\begin{equation}\label{ineq52}
r=2-\sqrt{2}, \  \Delta _j>\Delta _{j+1}=3+2 \sqrt{2} \quad\text{and}\quad 
   a=-\left(\Delta_j+1+{1}/{\sqrt{2}}\right).
\end{equation}
But it is easy to check that \eqref{ineq52}, the inequality $a^2-4b<0$ from \eqref{lemma5up}, and the \textit{negation} of \eqref{22cond} cannot hold simultaneously.
So \eqref{22cond} is proved in this case.

Therefore, in the rest of Step 3, we can suppose that $\widetilde{P}_1(r)\ne 0$. Then $a$ is expressed
from $\widetilde{P}_8(\Delta_{j+1}, a, r)=0$ and we get
\begin{equation}\label{step3aform}
a={\widetilde{P}_9(r)}/{\widetilde{P}_1(r)}.
\end{equation}
Moreover, we express $\Delta_j$ from $P_{k+j}(r)=0$ (shown to hold in \eqref{Pkjzero}) as
in \eqref{Deltajexpression47}. To finish Step 3, we will verify that 
\begin{equation}\label{finalstep3}
a^2-4b<0 \text{ from } \eqref{lemma5up}, \text{ and the negation of } \eqref{22cond}
\end{equation}
cannot hold. The substitution of \eqref{step3aform} and \eqref{Deltajexpression47} into 
\eqref{finalstep3} yields
\begin{equation}\label{final2step3}
r>0, \quad \widetilde{P}_{6}(r) \widetilde{P}_{10}(r) \widetilde{P}_{11}(r)<0,
\quad \widetilde{P}_{5}(r)/\widetilde{P}_{1}(r)\ge 0,
\quad\text{and}\quad \widetilde{P}_{6}(r)\widetilde{P}_{7}(r)/\widetilde{P}_{1}(r)\le 0.
\end{equation}
Again, these $\widetilde{P}_{j}$ polynomials are at most quadratic in $\Delta_{j+1}$, 
so one can check that \eqref{final2step3} has no solution.

\indent \textbf{Step 4.} To prove Statement 4, we first suppose that 
\begin{equation}\label{58cond}
r\ne 1\quad\text{and}\quad \widetilde{P}_{1}(r)\ne 0.
\end{equation} 
Then we have seen in Step 3 that
if $(a,b)\in\mathbb{R}^2$ solves \eqref{vector49}, then $a$ and $b$ necessarily have the 
form \eqref{step3aform} and \eqref{b52eq}, respectively; and the uniqueness of $r$
is guaranteed by Step 2. So, 
under condition \eqref{58cond}, any triplet solving \eqref{vector49} is unique. On the other hand,
 direct
substitution into \eqref{vector49} shows that the above $(r, a, b)$ is indeed a 
solution---the non-trivial component to check in \eqref{vector49} is the second one,
which is just   
$\left(\Delta_j-1\right) \left(\Delta_j-\Delta_{j+1}\right) P_{k+j}(r)/
\widetilde{P}_{1}(r)=0$ due to $ P_{k+j}(r)=0$ from assumption \eqref{lemma5down}.
Finally, to finish Step 4 in the case when \eqref{58cond} holds, we notice that for this unique triplet
$(r, a, b)$ we have
$a^2-4b=\widetilde{P}_{6}(r) \widetilde{P}_{10}(r) \widetilde{P}_{11}(r)/(\widetilde{P}_{1}(r))^2$. 
But, with $\Delta_j$ expressed as in \eqref{Deltajexpression47}, one can again show that
\[
\widetilde{P}_{6}(r) \widetilde{P}_{10}(r) \widetilde{P}_{11}(r)\ge 0, \quad 
1<\Delta_{j+1}<\Delta_{j}, \quad\text{and}\quad
\eqref{22cond}\text{ from assumption }\eqref{lemma5down}
\]
cannot hold. This means that $a^2-4b<0$ from \eqref{lemma5up} also holds.

To prove Statement 4 when $r=1$, we recall from Step 3 that in this case the only possible
triplet is $(r, a, b)= (1, -2, 4 \Delta_j-\Delta_j^2)$ with \eqref{delta50}, which indeed satisfy
\eqref{lemma5up}.

Lastly, we consider the case when $\widetilde{P}_{1}(r)=0$. We have seen in Step 2 that this time 
\eqref{rdeltaj147} holds. Since $\Delta_{j+1}<\Delta_{j}$, we also have 
$3+2 \sqrt{2}<\Delta_{j}$. These yield that the unique solution to \eqref{vector49} is 
\[
(r, a, b) = (2-\sqrt{2},\  -(\Delta_j+1+{1}/{\sqrt{2}}), \ (2+\sqrt{2}) \Delta_j).
\]
Now \eqref{22cond} implies $\Delta_j<5+{7}/{\sqrt{2}}$, so 
$a^2-4b=\Delta_j^2-3(2+\sqrt{2}) \Delta_j+\sqrt{2}+{3}/{2}<0$, and the proof is complete.
\end{proof}

\begin{proof}[The proof of Theorem \ref{thm:optimal3rdorder}] 

\textbf{Step 1.}  The necessity of the condition $\Omega_{k,n} > 3$ for the existence of a third-order formula with positive SSP coefficient
  is again an immediate consequence of Theorem~\ref{thm:upperbound}. 
  To see that this condition is also sufficient, suppose to the contrary that there is no third-order
  formula with $k$ steps, positive SSP coefficient and $\Omega_{k,n} > 3$, and let us apply Lemma~\ref{lem:dual} with $r:=0$.
  First we show that, up to a positive multiplicative constant, $q_n(x) = x^2(\Omega_{k,n}-x)$ is 
the unique 
   polynomial satisfying \eqref{dualconditions}, and
  Properties 1--2 and 4--5 of Lemma~\ref{lem:dual}.
  Indeed, \eqref{dualconditions:a}--\eqref{dualconditions:b} with $j=0$ and $r=0$,
  together with Properties 2 and 5 imply that $\Omega_{0,n}=0$ is a root of $q_n$
  (otherwise the logarithmic derivative of $q_n$ at $0$ would be negative), and
  then the multiplicity of $0$ is precisely two due to Properties 1 and 4, so the desired form
of $q_n$ follows. 
  Now, applying \eqref{dualconditions:b} with $j=k-1$ and $r=0$ to this $q_n$
  gives 
$\left(3-\Omega_{k,n}\right) \left(\Omega_{k,n}-1\right)\ge 0$, contradicting to 
$\Omega_{k,n} > 3$. This contradiction proves the sufficiency of the condition $\Omega_{k,n} > 3$.

 \indent \textbf{Step 2.}  From now on we assume $\Omega_{k,n} > 3$, and that a formula with the optimal
  SSP coefficient $r:=\sspcoeff_n(\omega_n,\delta_n,\beta_n)\in  (0,+\infty)$ is considered. 
  Let $q_n$ be a polynomial satisfying conditions \eqref{dualconditions},
  Properties 1--6 of Lemma \ref{lem:dual}, and the condition $(iii)$
  of Lemma~\ref{lem:dualoptimal}.

 \indent \textbf{Case I.} In the case when $\Omega_{k,n}$ is not the only real root of $q_n$,  
then, due to Properties 1, 3 and 5 of Lemma~\ref{lem:dual}, $q_n$ is unique (up to a positive multiplicative constant)  and 
  takes the form 
  $q_n(x)=(x-\Omega_{j_0,n})^2(\Omega_{k,n}-x)$ with an appropriate
  index $0\leq j_0\leq k-1$. We notice that 
the only binding inequality in \eqref{dualconditions:a} is the one
corresponding to $j=j_0$, and the $j=j_0$ inequality in \eqref{dualconditions:b} is also binding 
(independently of the value of $r$). 
On the other hand, now all roots of $q_n$ are real, so the logarithmic derivative of $q_n$ is strictly 
  decreasing on the intervals $[\Omega_{0,n},\Omega_{j_0,n})$ and $(\Omega_{j_0,n},\Omega_{k-1,n}]$.
Thus, by $(iii)$ of Lemma~\ref{lem:dualoptimal}, 
\eqref{dualconditions:b} contains either two or three binding inequalities,
and they correspond to
\begin{itemize}
  \item[\textbf{A.}] $j\in\{0, k-1\}$ for $j_0=0$,
  \item[\textbf{B.}] $j\in\{j_0-1, j_0\}$ or $j\in \{j_0, k-1\}$ for $1\le j_0\le k-2$,
  \item[\textbf{C.}] $j\in \{k-2, k-1\}$ for $j_0=k-1$.
\end{itemize}
In Case A, \eqref{dualconditions:b} is solved with $j=k-1$,
and we obtain $r={(\Omega_{k,n}-3)}/{(\Omega_{k,n}-1)}$.  Due to $(i)$ and $(ii)$ of Lemma~\ref{lem:dualoptimal}, all formula coefficients except for possibly
$\{\delta_{0,n}, \beta_{0,n}, \allowbreak \beta_{k-1,n}\}$ vanish.\\
In Case B, \eqref{dualconditions:b} is solved with $j=j_0-1$ and $j=k-1$ to get
$r={2}/{(\Omega_{j_0,n}-\Omega_{j_0-1,n})}+{1}/({\Omega_{k,n}-\Omega_{j_0-1,n}})$ and 
$r={(\Omega_{k,n}-\Omega_{j_0,n}-3)}/{(\Omega_{k,n}-\Omega_{j_0,n}-1)}$, respectively, then the maximum is chosen. If these two expressions for $r$ are not equal, we get from 
$(i)$ and $(ii)$ of Lemma~\ref{lem:dualoptimal} that the number of 
non-zero formula coefficients is at most three; whereas if the two expressions for $r$ are equal,
the number of non-zero formula coefficients is at most four.\\
In Case C, \eqref{dualconditions:b} is solved with $j=k-2$
to yield $r={2}/{(\Omega_{k-1,n}-\Omega_{k-2,n})}+{1}/{(\Omega_{k,n}-\Omega_{k-2,n})}$.
All formula coefficients except for possibly 
$\{\delta_{k-1,n}, \beta_{k-2,n}, \beta_{k-1,n}\}$ vanish.

Therefore, the optimal SSP coefficient is equal to one of the $r_j$ quantities ($0\le j\le k-1$)
defined in Section \ref{sec:3rdordermethods}, and the 
non-zero formula coefficients also have the form stated there. 

\indent \textbf{Case II.}  In the case when $\Omega_{k,n}$ is the only real root of $q_n$, 
then, by taking into account \eqref{dualconditions:a}, and Properties 1 and 5 of Lemma~\ref{lem:dual}, $q_n$ (up to a positive multiplicative constant) has the form
\[q_n(x)=(\Omega_{k,n}-x)\left((\Omega_{k,n}-x)^2+a(\Omega_{k,n}-x)+b\right)\]
with some coefficients $a, b\in \mathbb{R}$ satisfying the discriminant condition
$a^2-4b<0$. Let us introduce the abbreviation $Q_3:=q_n' + rq_n$.
 
This time there are no binding inequalities in \eqref{dualconditions:a}, 
so, due to $(iii)$ of Lemma~\ref{lem:dualoptimal}, 
 precisely three inequalities
   are binding  in \eqref{dualconditions:b}---there can be no more, since
the polynomial $Q_3$ 
is cubic. Let $0\le j_0< j_1< j_2\le k-1$ denote the three indices corresponding to these binding inequalities, then
$Q_3(\Omega_{j_0,n})=Q_3(\Omega_{j_1,n})=Q_3(\Omega_{j_2,n})=0$.
 We observe that 
the leading coefficient of $Q_3$ is $-r<0$, and 
$Q_3(\Omega_{k,n}) = -b <0$ because of $a^2-4b<0$. 
As $\Omega_{j,n}$ is strictly increasing in $j$, we get that
 $Q_3$ is positive on 
$(-\infty, \Omega_{j_0,n})\cup (\Omega_{j_1,n}, \Omega_{j_2,n})$, and negative on 
$(\Omega_{j_0,n}, \Omega_{j_1,n})\cup (\Omega_{j_2,n}, +\infty)$. Hence neither
$j_1\ge j_0+2$ nor $j_2\le k-2$ can occur (otherwise $Q_3(\Omega_{j_0+1,n})<0$ 
or $Q_3(\Omega_{k-1,n})<0$ would contradict to \eqref{dualconditions:b}). 
This shows that the binding inequalities in \eqref{dualconditions:b} are the ones corresponding to the index set $j\in\{j_0, j_0+1, k-1\}$ with an appropriate index $0\leq j_0\leq k-3$.
The previous sentence can be rewritten as 
$A_{j_0}(r)\cdot(a,b,1)^\top=0$, with the matrix $A_{j_0}$ defined in \eqref{Ajmatrixdef}. Since 
now $a^2-4b<0$ as well, so \eqref{lemma5up} is also satisfied. Hence \eqref{22cond} and $P_{k+j_0}(r)=0$ hold by
Lemma \ref{lemma5}. Due to  
uniqueness we see that $r=r_{k+j_0}$, with $r_{k+j_0}$ defined in \eqref{rhokjdef}.

Thus, the optimal SSP coefficient is equal to one of the $r_{k+j_0}$ quantities ($0\le j_0\le k-3$),
and we get from $(ii)$ of Lemma~\ref{lem:dualoptimal}  that  
all formula coefficients except for possibly $\{\beta_{j_0,n}, \beta_{j_0+1,n}, \beta_{k-1,n}\}$
  vanish.
\begin{sloppypar}
\textbf{Step 3.} Finally, in order to conclude that the optimal SSP coefficient
$\sspcoeff_n(\omega_n,\delta_n,\beta_n)$
is in fact the minimum of the $r_j$ expressions defined in Section \ref{sec:3rdordermethods},
we show that $\sspcoeff_n(\omega_n,\delta_n,\beta_n)\le r_j$ for any $0\le j\le 2k-3$.
\end{sloppypar}

Indeed, fix any $0\leq j_0 \leq k-1$ and define $q_n(x):=(x-\Omega_{j_0,n})^2(\Omega_{k,n}-x)$. 
The inequalities \eqref{dualconditions:a} and \eqref{dualconditions:c} are trivially
satisfied. We verify \eqref{dualconditions:b} with $r := r_{j_0}$ by 
setting $\widetilde{Q}_3:=q_n' + rq_n$ and distinguishing three cases. 
\begin{itemize}[leftmargin=33pt]
  \item[\textbf{A.}] If $j_0=0$, then one checks that the three roots of $\widetilde{Q}_3$ are
found at $\{x_1, 0, \Omega_{k-1,n}\}$ with $x_1=-{2 \Omega_{k,n}}/\left({\Omega_{k,n}-3}\right)<0$.
  \item[\textbf{B.}] If $1\le j_0\le k-2$ and 
\begin{equation}\label{lhsrhs}
\frac{\Omega_{k,n}-\Omega_{j_0,n}-3}{\Omega_{k,n}-\Omega_{j_0,n}-1}<
\frac{2}{\omega_{j_0,n}}+\frac{1}{\Omega_{k,n}-\Omega_{j_0-1,n}},
\end{equation}
then the three roots of $\widetilde{Q}_3$ are
found at $\{\Omega_{j_0-1,n}, \Omega_{j_0,n}, x_3\}$
with $\Omega_{k-1,n}<x_3<\Omega_{k,n}$. If we have ``$>$'' in \eqref{lhsrhs}, then
the three roots of $\widetilde{Q}_3$ are
found at $\{x_1, \Omega_{j_0,n}, \Omega_{k-1,n}\}$
with $\Omega_{j_0-1,n}<x_1<\Omega_{j_0,n}$. Finally, if
there is ``$=$'' in \eqref{lhsrhs}, then the roots of $\widetilde{Q}_3$ are
found at $\{\Omega_{j_0-1,n}, \Omega_{j_0,n}, \Omega_{k-1,n}\}$.
  \item[\textbf{C.}] If $j_0=k-1$, then the three roots of $\widetilde{Q}_3$ are
found at $\{\Omega_{k-2,n}, \Omega_{k-1,n}, x_3\}$ with $\Omega_{k-1,n}<x_3<\Omega_{k,n}$.
\end{itemize}
These, combined with the fact that the leading coefficient of $\widetilde{Q}_3$ is negative, mean that  all inequalities in \eqref{dualconditions} are satisfied.
Thus, in view of Lemma \ref{lem:dual}, $\sspcoeff_n\le r_{j_0}$.

Now fix an arbitrary index $0\le j_0\le k-3$. The inequality $\sspcoeff_n\le r_{k+j_0}$
is trivial if $r_{k+j_0}=+\infty$. Otherwise, if $r_{k+j_0}<+\infty$ in \eqref{rhokjdef}, then, 
by definition, \eqref{22cond} and $P_{k+j_0}(r_{k+j_0})=0$ hold. So due to Lemma \ref{lemma5},
\eqref{lemma5up} holds with $r:=r_{k+j_0}$ and $j:=j_0$. By defining 
 \[q_n(x):=(\Omega_{k,n}-x)\left((\Omega_{k,n}-x)^2+a(\Omega_{k,n}-x)+b\right)\]
and with $a$ and $b$ given by \eqref{lemma5up}, we see that \eqref{dualconditions:a} and \eqref{dualconditions:c} are true because of $a^2-4b<0$.  On the other hand, the 
$A_{j_0}(r)\cdot(a,b,1)^\top=0$ relation in \eqref{lemma5up} expresses the fact that
the inequalities corresponding to 
$j\in\{j_0, j_0+1, k-1\}$ are binding in \eqref{dualconditions:b}. So, just as in Case II in Step 2,
we see that $q_n' + r q_n$ is positive on 
$(-\infty, \Omega_{j_0,n})\cup (\Omega_{j_0+1,n}, \Omega_{k-1,n})$, and negative on 
$(\Omega_{j_0,n}, \Omega_{j_0+1,n})\cup (\Omega_{k-1,n}, +\infty)$. Thus all inequalities
in \eqref{dualconditions:b} hold.
This means that $q_n$ satisfies conditions \eqref{dualconditions} with this $r$ value, 
so, in view of Lemma \ref{lem:dual}, $\sspcoeff_n\le r_{k+j_0}$.
 The proof of the theorem is complete.
\end{proof}

\subsection{The proof of Theorem \ref{sec3Thm5}}\label{sec:proof0Thm5proof}

\begin{proof}
We follow the ideas of the proof given in \cite{LMMpaper} for the fixed-step-size case.

Suppose to the contrary that $p$,  $k$ and $\omega_{j,n}$ ($1\le j \le k$)
satisfy the conditions of the theorem for some $n \ge k$ and $K_1, K_2 \ge 1$, but for all formulae
with $k$ steps and order of accuracy $p$, we have $\sspcoeff_n(\omega, \delta, \beta) = 0$.
Then by Lemma~\ref{lem:dual}, there exists a non-zero real polynomial $q_n$  that satisfies the conditions \eqref{dualconditions:a}--\eqref{dualconditions:c}
with $r=0$, moreover $q_n\ge 0$ on $[0,\Omega_{k,n}]$ and $\deg q_n=p$
(Properties 1 and 6 of Lemma \ref{lem:dual}). 

First we define $A := \max_{x \in [0,\Omega_{k,n}]} q_n(x)$
and $b := \max_{x \in [0,\Omega_{k,n}]} |q_n'(x)|$ and we introduce
the polynomial $P(x) := q_n\left(\frac{x+1}{2}\cdot\Omega_{k,n}\right)-\frac{A}{2}.$
Then the Markov brothers' inequality (see, e.g., \cite{LMMpaper}) for the first derivative implies that
\begin{equation*}
  \max_{x\in [-1,1]}|P'(x)| \le p^2 \cdot \max_{x\in [-1,1]}|P(x)|,
\end{equation*}
that is, $b\frac{\Omega_{k, n}}{2} \le p^2\frac{A}{2}$.
On the other hand, summing the lower estimates in \eqref{K1K2omegajineq} we get 
$\frac{k}{K_1} \le \Omega_{k, n}$, implying 
$\frac{b}{2}\cdot \frac{k}{K_1} \le \frac{b}{2}{\Omega_{k, n}}$. Thus 
\begin{equation}\label{bkK1ineq}
{b k}/{K_1}\le p^2 A.
\end{equation}

Now, because of $q_n(\Omega_{k,n}) = 0$, the Newton--Leibniz formula and elementary estimates yield that
\begin{equation}\label{maxqprime}
A \le \int_0^{\Omega_{k,n}} \max(0,-q_n'(t)) dt.
\end{equation}
Here we notice that the polynomial $q_n'$ is of degree at most $p-1$, and $q_n'(\Omega_{j,n}) \ge 0$
for all $0 \le j\le k-1$, so the set \[
\{j\in \mathbb{Z}\cap [0,k-1] : \exists x \in [\Omega_{j,n}, \Omega_{j+1, n}] 
\text{ with } q_n'(x) < 0 \}
\]
has at most ${p}/{2}$ elements. Therefore---by decomposing the interval $[0,{\Omega_{k,n}}]$ as
the union of the appropriate subintervals of length $\omega_{j,n}$, and applying the upper estimate
in \eqref{K1K2omegajineq} and the estimate
$-q_n'\le b$ at most $p/2$ times---we get that 
$\int_0^{\Omega_{k,n}} \max(0,-q_n'(t)) dt\le \frac{p}{2}K_2 b$, and hence
\begin{equation}\label{ApK2bineq}
A\le {b p K_2}/{2}.
\end{equation}

Inequalities \eqref{bkK1ineq} and \eqref{ApK2bineq} imply that $k \le p^3 K_1 K_2/2$,
which contradicts the assumption of Theorem \ref{sec3Thm5}. Hence there is a formula with $k$ steps,
order of accuracy $p$, and $\sspcoeff_n(\omega,\delta,\beta) > 0$.
\end{proof}

\section{The proofs of the theorems in Section \ref{sec:stepsize}\label{sec:proofs}}

In Section \ref{globattrsction} we first prove a theorem about the 
convergence of some rational recursions.  This Theorem 
\ref{attractivitytheorem} will then be used in Sections \ref{proofofTh2section} and
\ref{proofofTh3section} to prove Theorems \ref{hnconvergencetheorem} 
and \ref{hnconvergencetheorem3rdorder}, respectively.

\subsection{Global attractivity in a class of higher-order rational recursions}\label{globattrsction}

\begin{thm}\label{attractivitytheorem}
Let us fix an integer $k\ge 3$ and a real number $A>0$. Suppose that for $1\le j \le k-1$
the initial values $\tau_j\ge 0$ are given such that $\sum_{j=1}^{k-1}\tau_j>0$.
For any $n\ge k$ we define
\begin{equation}\label{taudef}
\tau_n:=\frac{\sum_{j=1}^{k-1}\tau_{n-j}}{A+\sum_{j=1}^{k-1}\tau_{n-j}}.
\end{equation}
Then 
\[
\lim_{n\to +\infty} \tau_n=\left\{
\begin{aligned}
    &  0, & \text{ if }\quad  & k-1\le A, \\ 
   & \frac{k-1-A}{k-1},  & \text{ if }\quad  & 0<A<k-1.
\end{aligned}
\right.
\]
\end{thm}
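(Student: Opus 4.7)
My plan is to reduce the $(k-1)$-st order nonlinear recursion \eqref{taudef} to iteration of a single scalar map, and then conclude by a monotone sandwich argument.

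I would first record some elementary facts. The function $f(y):=y/(A+y)$ is strictly increasing and continuous on $[0,\infty)$ with $0\le f(y)<1$, so \eqref{taudef} gives $\tau_n\in[0,1)$ for every $n\ge k$; and since $\sum_{j=1}^{k-1}\tau_j>0$, a short induction (each $S_n$ contains a strictly positive term) shows $\tau_n>0$ for all $n\ge k$. I would then analyze the one-dimensional map
\[
\phi(x):=f\bigl((k-1)x\bigr)=\frac{(k-1)x}{A+(k-1)x}.
\]
Its non-negative fixed points are exactly $0$ and, when $A<k-1$, the value $\tau^*:=(k-1-A)/(k-1)$. A direct computation shows that $\phi(x)-x$ has the sign of $\tau^*-x$ on $x>0$, so if $A\ge k-1$ then the iterates of $\phi$ from any $x_0>0$ decrease monotonically to $0$, whereas if $A<k-1$ they converge monotonically to $\tau^*$ (from above or from below). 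Writing $\tau^*_+:=\max(0,\tau^*)$, every positive orbit of $\phi$ converges monotonically to $\tau^*_+$.

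Next I would set up the sandwich. Define
\[
u_0:=\max\bigl(\tau^*_+,\tau_k,\tau_{k+1},\ldots,\tau_{2k-2}\bigr),\qquad v_0:=\min\bigl(\tau^*_+,\tau_k,\tau_{k+1},\ldots,\tau_{2k-2}\bigr),
\]
together with $u_{m+1}:=\phi(u_m)$ and $v_{m+1}:=\phi(v_m)$, so that by the analysis of $\phi$ one has $u_m\searrow\tau^*_+$ and $v_m\nearrow\tau^*_+$. With the slow index $g(n):=\lfloor(n-k)/(k-1)\rfloor$ for $n\ge k$, the core assertion I would prove by induction on $n$ is
\[
v_{g(n)}\le\tau_n\le u_{g(n)}\quad\text{for all }n\ge k.
\]
The base case is $k\le n\le 2k-2$, where $g(n)=0$ and the inequality holds by construction. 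The inductive step uses the identity $g(n-k+1)=g(n)-1$ together with the fact that $g$ is non-decreasing; thus for $1\le j\le k-1$ we have $g(n-j)\ge g(n)-1$, and by monotonicity of $u_m$ this gives $\tau_{n-j}\le u_{g(n-j)}\le u_{g(n)-1}$ (and analogously $\tau_{n-j}\ge v_{g(n)-1}$). Summing over $j$ yields $S_n\le (k-1)u_{g(n)-1}$, and then the monotonicity of $f$ delivers
\[
\tau_n=f(S_n)\le f\bigl((k-1)u_{g(n)-1}\bigr)=\phi(u_{g(n)-1})=u_{g(n)},
\]
with the lower bound handled analogously. Since $g(n)\to\infty$ and both $u_{g(n)},v_{g(n)}\to\tau^*_+$, the squeeze theorem forces $\tau_n\to\tau^*_+$, matching the two cases in the statement.

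The point that requires the most care is the lower sandwich in the regime $0<A<k-1$: one needs $v_0>0$ in order to obtain a nontrivial limit, and this follows from the strict positivity of $\tau_n$ for $n\ge k$ together with $\tau^*>0$ (both entries in the $\min$ are positive). The degenerate cases in which $u_0$ or $v_0$ already equals $\tau^*_+$ correspond to a constant comparison sequence; then the sandwich collapses to a direct bound, but the induction still goes through and delivers the same conclusion.
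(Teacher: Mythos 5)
Your proof is correct. Every step checks out: the elementary facts about $f$ and $\phi$, the characterization of the fixed points and the sign of $\phi(x)-x$, the monotone convergence of the one-dimensional orbits $u_m\searrow\tau^*_+$ and $v_m\nearrow\tau^*_+$, the identity $g(n-k+1)=g(n)-1$, and the inductive sandwich $v_{g(n)}\le\tau_n\le u_{g(n)}$. The strict positivity $\tau_n>0$ for $n\ge k$ (needed so that $v_0>0$ when $0<A<k-1$) is established correctly.

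Your route, however, differs genuinely from the paper's. The paper invokes, as a black box, a convergence theorem for higher-order recursions $x_n=f(x_{n-1},\dots,x_{n-k+1})$ with $f$ continuous, non-decreasing in each argument, mapping some box $[a,b]^{k-1}$ into $[a,b]$, and having a unique diagonal fixed point (their Lemma~\ref{ThA01Lemma}, cited from \cite{KulenovicLadas}); the work is then in choosing the interval $[a,b]$ appropriately in each of the cases $k-1\le A$, $\tau^*\ge(k-1-A)/(k-1)$ (note: the paper's $\tau^*$ denotes $\min_j\tau_j$, not your fixed point), and $\tau^*<(k-1-A)/(k-1)$. You instead give a self-contained proof by reducing to the scalar map $\phi(x)=f((k-1)x)$ and sandwiching $\tau_n$ between two one-dimensional monotone orbits indexed by $\lfloor(n-k)/(k-1)\rfloor$. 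The underlying mechanism is the same (both exploit monotonicity of the recursion map), and indeed your argument is essentially what a proof of the cited lemma looks like in this special case; but yours avoids any external reference, treats the three cases uniformly, and makes the convergence quantitative through the explicit comparison sequences $u_m,v_m$. The paper's approach is shorter on the page because the heavy lifting is outsourced, whereas yours is longer but fully elementary.
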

To prove Theorem 
\ref{attractivitytheorem}, we will apply the following lemma. 
\begin{lem}[Theorem A.0.1 in \cite{KulenovicLadas}]\label{ThA01Lemma}
Suppose that $a\le b$ are given real numbers, $k\ge 3$ is a fixed integer,
and the numbers $x_j$ are chosen such that $x_j\in [a,b]$ for $1\le j\le k-1$. Assume further
 that 
\begin{enumerate}
\item $f\colon [a,b]^{k-1}\to [a,b]$ is continuous,
\item $f$ is non-decreasing in each of its arguments,
\item there is a unique $\overline{x}\in [a,b]$ such that $f(\overline{x},\overline{x},\ldots, \overline{x})=\overline{x}$,
\item and the sequence $x_n$ is defined for $n\ge k$ as 
\[
x_n:=f(x_{n-1},x_{n-2},\ldots, x_{n-(k-1)}).
\]
\end{enumerate}
Then $\lim_{n\to +\infty} x_n=\overline{x}$.
\end{lem}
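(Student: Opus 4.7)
The plan is to prove the lemma by a standard $\liminf$/$\limsup$ sandwich argument combined with a sign analysis based on the uniqueness of the fixed point. A straightforward induction using condition~1 shows that $x_n\in[a,b]$ for every $n\ge 1$, so the sequence is bounded, and we may set
\[
L:=\liminf_{n\to+\infty}x_n\in[a,b],\qquad S:=\limsup_{n\to+\infty}x_n\in[a,b],\qquad L\le S.
\]
The goal is to show $L=S=\overline{x}$.

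First I would extract subsequences realizing $L$ and $S$. Choose $n_j\to+\infty$ with $x_{n_j}\to L$. Each of the $k-1$ "history" sequences $x_{n_j-1},\ldots,x_{n_j-(k-1)}$ lies in $[a,b]$, so by a diagonal application of Bolzano--Weierstrass we may pass to a further subsequence (still denoted $n_j$) so that $x_{n_j-i}\to y_i$ for each $1\le i\le k-1$, with $y_i\ge L$ since $L$ is the limit inferior. Using continuity of $f$ (condition~1) and the recursion definition (condition~4),
\[
L=\lim_{j\to+\infty}f\bigl(x_{n_j-1},\ldots,x_{n_j-(k-1)}\bigr)=f(y_1,\ldots,y_{k-1}).
\]
By monotonicity of $f$ in each argument (condition~2) and $y_i\ge L$, we deduce $L=f(y_1,\ldots,y_{k-1})\ge f(L,\ldots,L)$, so $f(L,\ldots,L)\le L$. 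An entirely analogous argument with a subsequence $x_{m_j}\to S$ and corresponding history limits $z_i\le S$ yields $f(S,\ldots,S)\ge S$.

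To close the argument I introduce the continuous auxiliary function $g\colon[a,b]\to\mathbb{R}$ defined by $g(x):=f(x,\ldots,x)-x$. Since $f$ maps $[a,b]^{k-1}$ into $[a,b]$, one has $g(a)\ge 0$ and $g(b)\le 0$, and by condition~3 the only zero of $g$ in $[a,b]$ is $\overline{x}$. Sign analysis via the intermediate value theorem (splitting into the cases $a<\overline{x}<b$, $a=\overline{x}$, $\overline{x}=b$, $a=\overline{x}=b$) then implies the following clean dichotomy: $g(x)\le 0$ forces $x\ge\overline{x}$, and $g(x)\ge 0$ forces $x\le\overline{x}$. Applied to the previously established inequalities $g(L)\le 0$ and $g(S)\ge 0$, this gives $\overline{x}\le L\le S\le\overline{x}$, hence $L=S=\overline{x}$ and $x_n\to\overline{x}$.

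The main obstacle is the subsequence step: one must simultaneously extract a subsequence along which all $k-1$ preceding values converge, so that the recursion can be passed through the (jointly) continuous $f$. Once that is done, the monotonicity in each argument converts the order information ``$y_i\ge L$'' and ``$z_i\le S$'' into the scalar inequalities $f(L,\ldots,L)\le L\le S\le f(S,\ldots,S)$, and the uniqueness hypothesis on the diagonal fixed point of $f$ is exactly what is needed to squeeze $L$ and $S$ together at $\overline{x}$.
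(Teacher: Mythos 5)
Your argument is correct, but note that the paper itself does not prove this lemma: it simply cites Kulenovi\'c--Ladas (Theorems 1.4.8, A.0.1, A.0.9), where the standard proof for the non-decreasing case runs the diagonal map from the endpoints, i.e.\ sets $m_0:=a$, $M_0:=b$, $m_{i+1}:=f(m_i,\ldots,m_i)$, $M_{i+1}:=f(M_i,\ldots,M_i)$, shows $m_i$ is non-decreasing and $M_i$ non-increasing with $m_i\le x_n\le M_i$ for all sufficiently large $n$, and then identifies both limits with $\overline{x}$ via uniqueness of the diagonal fixed point. Your route replaces the nested bounding sequences by a $\liminf$/$\limsup$ extraction: you pull out a subsequence along which all $k-1$ history values converge (which is the one genuinely delicate step, and you handle it correctly via Bolzano--Weierstrass), push the recursion through the joint continuity of $f$, and use monotonicity to get $f(L,\ldots,L)\le L$ and $f(S,\ldots,S)\ge S$; the sign analysis of $g(x)=f(x,\ldots,x)-x$ with $g(a)\ge 0$, $g(b)\le 0$ and the unique zero $\overline{x}$ then squeezes $L=S=\overline{x}$, and that dichotomy does follow from the intermediate value theorem as you claim. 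The classical proof buys slightly more (explicit monotone enclosures $m_i\le x_n\le M_i$, hence effective bounds), while yours is a compact self-contained verification that needs no auxiliary iteration; both hinge on exactly the same two ingredients, monotonicity in each argument and uniqueness of the diagonal fixed point.
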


The straightforward proof of Lemma \ref{ThA01Lemma} is found in \cite{KulenovicLadas}, 
see their Theorem 1.4.8 (for $k=3$), Theorem A.0.1 (for $k=4$) or Theorem A.0.9 
(for general $k$)---the idea of the proof is the same in the easiest case when $f$ is non-decreasing in each of its arguments.
Notice that in \cite[Theorem 1.4.8] {KulenovicLadas} one should have ``The equation $f(x,x)=x$ has a unique solution in $[a,b]$'' instead of ``\ldots\, a unique positive solution''. Now we give the proof of Theorem \ref{attractivitytheorem}.
\begin{proof}[The proof of Theorem \ref{attractivitytheorem}] For some $a\le b$ (to be specified soon) we set 
\begin{equation}\label{section8fdef}
f(z_1, z_2, \ldots, z_{k-1}):=\frac{\sum_{j=1}^{k-1}z_{j}}{A+\sum_{j=1}^{k-1}z_{j}}
\end{equation}
with $z_j\in [a,b]$ ($j=1, 2, \ldots, k-1$). Then 
\begin{equation}\label{section8fpartial}
(\partial_j f)(z_1, z_2, \ldots, z_{k-1})=\frac{A}{\left(A+\sum_{j=1}^{k-1}z_{j}\right)^2}>0,
\end{equation}
hence $f$ is non-decreasing in each of its arguments (and trivially continuous). Notice that due to
\eqref{taudef} we have $\tau_n\in (0,1)$ for any $n\ge k$, so by shifting the indices we
can assume that $\tau_j\in (0,1)$ for $1\le j\le k-1$, and  
$\tau_n=f(\tau_{n-1},\tau_{n-2},\ldots, \tau_{n-(k-1)})$ for $n\ge k$. We distinguish two cases.
\begin{enumerate}
\item \textit{The case $k-1\le A$.}  Then $f$ maps $[a,b]^{k-1}$ to $[a,b]$ with $a:=0$ and $b:=1$. 
Now for any $\overline{x}\in [0,1]$ we have
\begin{equation}\label{factorization}
\overline{x}-f(\overline{x},\overline{x},\ldots, \overline{x})\equiv \frac{\overline{x} [A-(k-1)+(k-1) \overline{x}]}{A+(k-1) \overline{x}}=0
\end{equation}
precisely if $\overline{x}=0$, so Lemma \ref{ThA01Lemma} yields $\lim_{n\to +\infty} \tau_n=0$.
\item \textit{The case $0<A<k-1$.}  We set $\tau^*:=\min_{1\le j\le k-1} \tau_j$. Then $\tau^*>0$
and $\tau_j\in [\tau^*,1]$ for $1\le j \le k-1$.
\begin{enumerate}
\item  \textit{The case $\tau^*\ge \frac{k-1-A}{k-1}$.} We choose $a:=\frac{k-1-A}{k-1}<1=:b$, and notice that $f(a,a,\ldots, a)=a$ and $f(b, b, \ldots, b)<b$. By also using the non-decreasing property of $f$ in each of its arguments we obtain that $f$ maps $[a,b]^{k-1}$ to $[a,b]$. Now for any $\overline{x}\in [a,b]$ the equality 
\eqref{factorization} holds 
if and only if $\overline{x}=a$, so Lemma \ref{ThA01Lemma} yields 
$\lim_{n\to +\infty} \tau_n=\frac{k-1-A}{k-1}$. 
\item  \textit{The case $0<\tau^*< \frac{k-1-A}{k-1}$.} This time we choose $a:=\tau^*<1=:b$. Since 
now
\[
a<f(a,a,\ldots, a) \Longleftrightarrow \tau^*<f(\tau^*,\tau^*,\ldots, \tau^*) \Longleftrightarrow
\tau^*< \frac{k-1-A}{k-1},
\]
we have just as before that $f$ maps $[a,b]^{k-1}$ to $[a,b]$. For any $\overline{x}\in [a,b]$ we have
\eqref{factorization} 
precisely if $\overline{x}=\frac{k-1-A}{k-1}$, therefore we can use Lemma \ref{ThA01Lemma} again to get
$\lim_{n\to +\infty} \tau_n=\frac{k-1-A}{k-1}$. 
\end{enumerate}
\end{enumerate}
\end{proof}
\begin{exa}
The sequence $(\tau_n)_{n\ge 1}$ defined by \eqref{taudef} can have long, non-monotonic starting slices. Consider, for example, the case $k=4$ with 
\[\tau_1:=1,\quad \tau_2:=\frac{1}{200},\quad \tau_3:=\frac{95638788642}{100000000000},
\]
and
\[
\tau_n:=\frac{\tau_{n-1}+\tau_{n-2}+\tau_{n-3}}{1+\tau_{n-1}+\tau_{n-2}+\tau_{n-3}}\quad\quad \text{for } n\ge 4.
\]
Then the consecutive monotone non-increasing subsequences of $\tau_{n}$ for 
$1\le n\le 1000$ has lengths
\ifpaper
	\begin{multline*}
	(2, 3, 2,1,2,1,2,1,2,1,2,3,3,3,3,2,1, 2, 1, 2, 1, \\ 2,1,2,3, 3, 3, 3, 2,1,2,1,2,1,2,3,3,3,3, 3, 917).
	\end{multline*}
\else
	\begin{align*}
	(2, 3, 2,1,2,1,2,1,2,1,2,3,3,3,3,2,1, 2, 1, 2, 1, 2,1,2,3, 3, 3, 3, 2,1,2,1,2,1,2,3,3,3,3, 3, 917).
	\end{align*}
\fi

\end{exa}

\subsection{The proof of Theorem \ref{hnconvergencetheorem}}\label{proofofTh2section}

\begin{proof}
We prove the theorem for $k=3$ first. 
We define 
two sequences
\begin{equation}\label{ineq27}
h_n^-:=\frac{h_{n-2}^{-}+h_{n-1}^{-}}{h_{n-2}^{-}+h_{n-1}^{-} + \mu^-}\cdot\mu^-,\quad
h_n^{+}:=\frac{h_{n-2}^{+}+h_{n-1}^{+}}{h_{n-2}^{+}+h_{n-1}^{+} + \mu^{+}}\cdot\mu^{+},
\quad
h_1^\pm:=h_1,\  h_2^\pm:=h_2,
\end{equation}
and their scaled counterparts
$
\tau_n^-:=h_n^-/\mu^-$,
$\tau_n^{+}:=h_n^{+}/\mu^{+}$
($n\ge 1$).
Then $\tau_n^-$ and $\tau_n^{+}$ satisfy
\[
\tau_n^- = \frac{\tau_{n-2}^-+ \tau_{n-1}^-}{\tau_{n-2}^- + \tau_{n-1}^- + 1}, \quad 
\tau_n^{+} = \frac{\tau_{n-2}^{+} + \tau_{n-1}^{+}}{\tau_{n-2}^{+} + \tau_{n-1}^{+} + 1},\quad \tau_1^\pm>0, \ \tau_2^\pm>0.
\]
By applying Theorem~\ref{attractivitytheorem} with $k=3$ and $A=1$ we see that 
$\tau_n^-\to {1}/{2}$ hence $h_n^-\to {\mu^-}/{2}$ as $n\to+\infty$.
Similarly, we get $h_n^+\to {\mu^+}/{2}$. We now define 
\begin{equation}\label{ftildefunctiondef}
(0,+\infty)^3\ni (a,x,y)\mapsto\widetilde{f}(a,x,y):=a\cdot\frac{x+y}{a+x+y}
\end{equation}
(cf. \eqref{section8fdef}). It is elementary to see that for any $(a,x,y)\in (0,+\infty)^3$ we have
\begin{equation}\label{ftildepospartder}
\partial_1 \widetilde{f}(a,x,y)=\frac{(x+y)^2}{(a+x+y)^2}>0, \quad \partial_2 \widetilde{f}(a,x,y)=\partial_3 \widetilde{f}(a,x,y)=
\frac{a^2}{(a+x+y)^2}>0
\end{equation}
(cf. \eqref{section8fpartial}, and notice that the function
$a\mapsto \widetilde{f}(a,x,y)/a$, for example, would be monotone \textit{decreasing}).
Clearly, for $n=1, 2$ we have 
\begin{equation}\label{hnsandwich}
h_n^-\le h_n \le h_n^{+},
\end{equation}
so we can suppose that \eqref{hnsandwich} has already been proved up to some
$n\ge 2$. Then by repeatedly using the inequality $\mu^-\le\mu_n\le\mu^{+}$
(implied by the assumption \eqref{murestriction}), \eqref{ftildepospartder} and
\eqref{hnsandwich}, we obtain
\[
h_{n+1}^-\equiv \frac{h_{n-1}^-+h_{n}^-}{h_{n-1}^-+h_{n}^- + \mu^-} \cdot\mu^-\le 
\frac{h_{n-1}+h_{n}^-}{h_{n-1}+h_{n}^- + \mu^-} \cdot\mu^-\le
\frac{h_{n-1}+h_{n}}{h_{n-1}+h_{n} + \mu^-} \cdot\mu^-\le \]
\[
\frac{h_{n-1}+h_{n}}{h_{n-1}+h_{n} + \mu_{n+1}} \cdot\mu_{n+1}\equiv h_{n+1}\le
\]
\[
\frac{h_{n-1}+h_{n}}{h_{n-1}+h_{n} + \mu^+} \cdot\mu^+\le
\frac{h_{n-1}+h_{n}^+}{h_{n-1}+h_{n}^+ + \mu^+} \cdot\mu^+\le
\frac{h_{n-1}^++h_{n}^+}{h_{n-1}^++h_{n}^+ + \mu^+} \cdot\mu^+\equiv h_{n+1}^+.
\]
This shows the validity of \eqref{hnsandwich} for all $n\ge 1$ by induction.
By taking $\liminf$ and $\limsup$ in \eqref{hnsandwich}, Theorem \ref{hnconvergencetheorem} for $k=3$ is proved. 

The proof of Theorem~\ref{hnconvergencetheorem} in the general case requires only formal  modifications of the argument given above: Theorem~\ref{attractivitytheorem} with a general $k\ge 3$ and with $A=1$ implies that for the corresponding sequences we have
$\tau_n^\pm\to \frac{k-2}{k-1}$ as $n\to +\infty$, and the corresponding 
function $\widetilde{f}\colon (0,+\infty)^k\to (0,+\infty)$ is increasing in each of its arguments.
\end{proof}

Figure \ref{hminushplus} gives a graphical illustration of Theorem \ref{hnconvergencetheorem} for 
$k=3$,
using a hypothetical sequence of values for $\mu_n$.

\begin{figure}
\begin{center}
\includegraphics[width=0.66\textwidth]{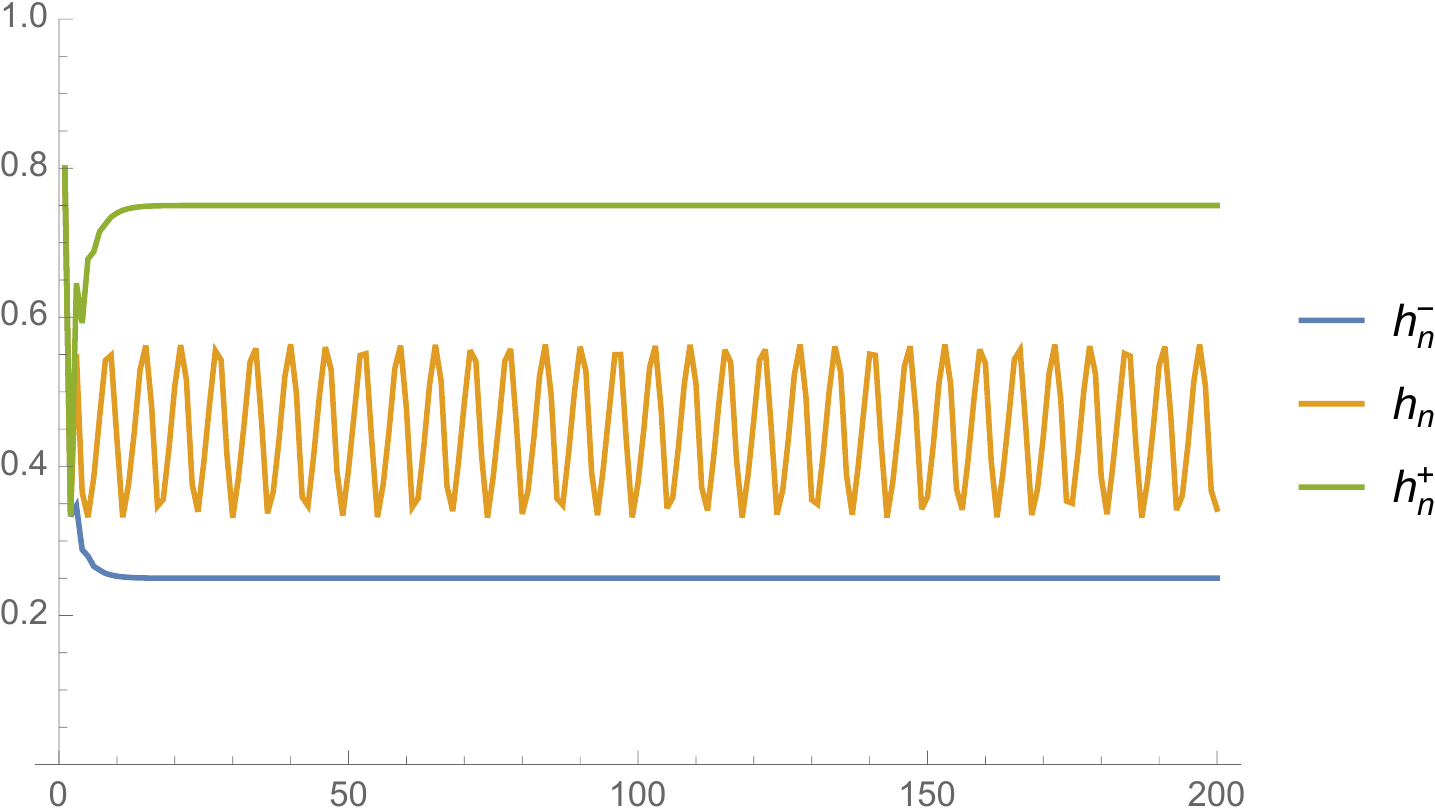}
\caption{The first 200 terms of the sequences $h_n^-$, $h_n$ and $h_n^+$ with
$\mu_n:=1+\sin(n)/2$, $\mu^-:=1/2$ and $\mu^+:=3/2$, see \eqref{greedydef} and \eqref{ineq27}.
\label{hminushplus}}
\end{center}
\end{figure}

\subsection{The proof of Theorem \ref{hnconvergencetheorem3rdorder}}\label{proofofTh3section}

\begin{proof}

\textbf{Step 1.} Initially we suppose that some values of $\varrho>0$ and $0<\rhoFE\le 1$ have already been chosen; we will make an actual choice for them in Step 5 so that the inductive
argument given below becomes valid. First let us set $n=k$.

\textbf{Step 2.} We know from \eqref{hjrestriction} that
\begin{equation}\label{label46}
h_m\le \varrho\cdot\hFE(u_m) \quad \text{for }\  m=1, 2, \ldots, n-1.
\end{equation}
We will prove that
\begin{equation}\label{label44}
\sum_{j=1}^{k-1} h_{n-j} \le \sqrt{8}\, \mu_n
\end{equation}
and
\begin{equation}\label{label45}
h_n\le \varrho\cdot\hFE(u_n).
\end{equation}

\textbf{Step 3.} Applying \eqref{label46} and \eqref{hferestriction} repeatedly we have
\begin{equation}\label{label47}
\sum_{j=1}^{k-1} h_{n-j} \le \varrho \cdot \sum_{j=1}^{k-1} \hFE(u_{n-j}) \le
\varrho \cdot \sum_{j=1}^{k-1} \frac{\hFE(u_{n-1})}{(\rhoFE)^{j-1}} =
\hFE(u_{n-1})\cdot \sum_{j=1}^{k-1} \frac{\varrho}{(\rhoFE)^{j-1}}. 
\end{equation}
On the other hand, the definition of $\mu_n$ in \eqref{generalmundef}, the repeated application of \eqref{hferestriction}, and  $0<\rhoFE\le 1$ imply that
\begin{equation}\label{label48}
\sqrt{8}\, \mu_n\ge \sqrt{8}\min_{0 \le j \le k-1} \left((\rhoFE)^{j}\cdot\hFE(u_{n-1})\right)
=\hFE(u_{n-1})\cdot \sqrt{8}\, (\rhoFE)^{k-1}.
\end{equation}
By comparing the right-hand sides of \eqref{label47} and \eqref{label48} 
after a division by $\hFE(u_{n-1})\ge\mu^->0$, we get that if $\varrho$ and $\rhoFE$
are chosen such that
\begin{equation}\label{label49suff1}
\sum_{j=1}^{k-1} \frac{\varrho}{(\rhoFE)^{j-1}}\le \sqrt{8}\, (\rhoFE)^{k-1},\quad\quad 
\varrho>0, \quad 0<\rhoFE\le 1,
\end{equation}
then \eqref{label44} holds for this particular $n$ value.

\textbf{Step 4.} In order to show \eqref{label45}, we first notice that
 the function  
\[
(0,+\infty)^2\ni (z,a)\mapsto \widetilde{f}(z,a):=\frac{z}{z+2a}\cdot a
\]
is increasing in each of its arguments
 (cf.~\eqref{ftildefunctiondef}-\eqref{ftildepospartder}). This monotonicity property, the 
definition of $h_n$ in \eqref{k45hndef}, \eqref{label47}, and the inequality 
$\mu_n\le \hFE(u_{n-1})$ yield that

\ifpaper
	\begin{align*}
		h_n &=\frac{\sum_{j=1}^{k-1} h_{n-j}}{\left(\sum_{j=1}^{k-1} h_{n-j} \right)+2\mu_n}\cdot\mu_n \\
		&\le \frac{\hFE(u_{n-1})\cdot \sum_{j=1}^{k-1} \frac{\varrho}{(\rhoFE)^{j-1}}
		}{\left(\hFE(u_{n-1})\cdot \sum_{j=1}^{k-1} \frac{\varrho}{(\rhoFE)^{j-1}}
		\right)+2\hFE(u_{n-1})}\cdot\hFE(u_{n-1}) \\
		&= \frac{\sum_{j=1}^{k-1} \frac{\varrho}{(\rhoFE)^{j-1}}
		}{2+\sum_{j=1}^{k-1} \frac{\varrho}{(\rhoFE)^{j-1}}}\cdot\hFE(u_{n-1}).
	\end{align*}
\else
	\begin{align*}
		h_n =\frac{\sum_{j=1}^{k-1} h_{n-j}}{\left(\sum_{j=1}^{k-1} h_{n-j} \right)+2\mu_n}\cdot\mu_n
		&\le \frac{\hFE(u_{n-1})\cdot \sum_{j=1}^{k-1} \frac{\varrho}{(\rhoFE)^{j-1}}
		}{\left(\hFE(u_{n-1})\cdot \sum_{j=1}^{k-1} \frac{\varrho}{(\rhoFE)^{j-1}}
		\right)+2\hFE(u_{n-1})}\cdot\hFE(u_{n-1}) \\
		&= \frac{\sum_{j=1}^{k-1} \frac{\varrho}{(\rhoFE)^{j-1}}
		}{2+\sum_{j=1}^{k-1} \frac{\varrho}{(\rhoFE)^{j-1}}}\cdot\hFE(u_{n-1}).
	\end{align*}
\fi
On the other hand, from \eqref{hferestriction} we see that
$\varrho\cdot\hFE(u_n)\ge \varrho\cdot\rhoFE\cdot\hFE(u_{n-1})$.
Therefore if $\varrho$ and $\rhoFE$
are chosen such that
\begin{equation}\label{label52suff2}
\frac{\sum_{j=1}^{k-1} \frac{\varrho}{(\rhoFE)^{j-1}}
}{2+\sum_{j=1}^{k-1} \frac{\varrho}{(\rhoFE)^{j-1}}}\le \varrho\cdot\rhoFE,
\end{equation}
then \eqref{label45} holds for the actual $n$ value.

\textbf{Step 5.} So \eqref{label44}-\eqref{label45} will be proved as soon as we have found
some $\varrho$ and $\rhoFE$ satisfying \eqref{label49suff1} and \eqref{label52suff2}. Figure
\ref{doubletrianlges} depicts the solution set of this system of inequalities 
\eqref{label49suff1}-\eqref{label52suff2} in the variables $(\varrho,\rhoFE)$. 
\begin{figure}
\begin{center}
\includegraphics[width=\textwidth]{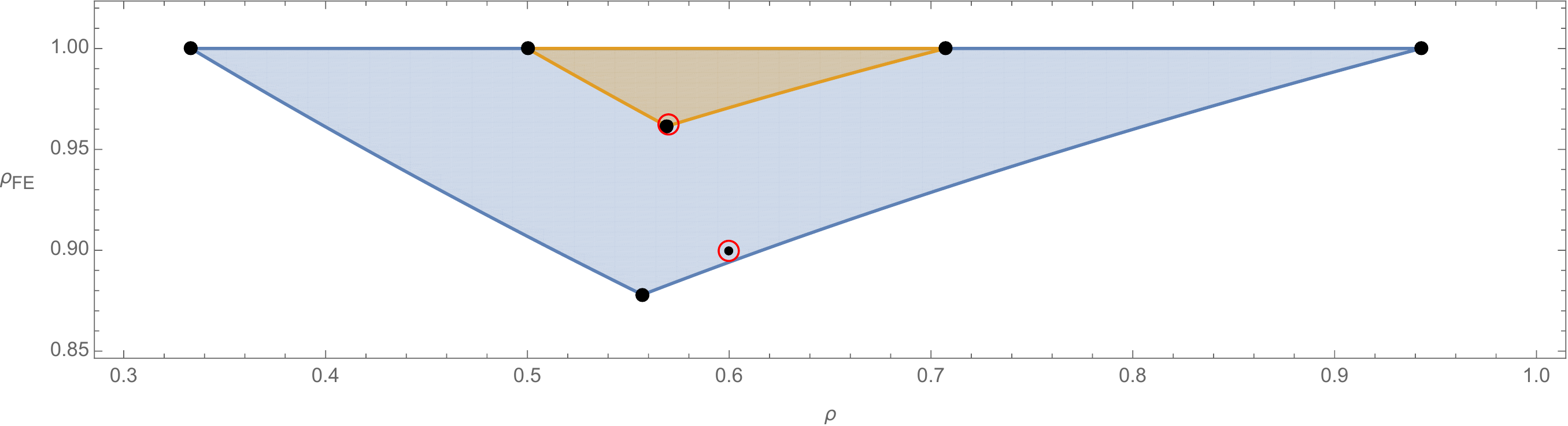}
\caption{The solution set of the inequalities \eqref{label49suff1}-\eqref{label52suff2}. 
In the $k=4$ case, the blue region has vertices at $(1/3, 1)$, $(\sqrt{8}/3, 1)$,  
$\approx(0.557, 0.878)$. In the $k=5$ case, the orange region has vertices at $(1/2,1)$, 
$(1/\sqrt{2},1)$, $\approx(0.569, 0.9615)$. The points corresponding to \eqref{particularrhovalues} have been circled.
\label{doubletrianlges}}
\end{center}
\end{figure}
The choice made in \eqref{particularrhovalues} is a simple rational pair; clearly,
one could for example relax the assumption on $\varrho$ (by choosing it larger), but then condition \eqref{hferestriction} would in general become more stringent.

\textbf{Step 6.}  According to the discussion preceding Theorem \ref{hnconvergencetheorem3rdorder}, on the one hand we have  
$\Omega_{k-1,n}>2$ hence $\sspcoeff_n>0$. On the other hand, \eqref{label44} guarantees   
$h_n=\sspcoeff_n \mu_n>0$ and this is the 
maximum value of $h_n$ preserving the SSP property.

\textbf{Step 7.} Now we repeat Steps 2-6 inductively for each $n\ge k+1$ 
(Step 5 is no longer needed since $(\varrho,\rhoFE)$ have already been given some particular values).
The range of $m$ in the induction hypothesis \eqref{label46} is extended step-by-step
by \eqref{label45}.

\textbf{Step 8.} Finally, to prove \eqref{3ndorderinfsup}, we make use of the fact that
\[(0,+\infty)^k\ni (a,z_1,z_2,\ldots ,z_{k-1})\mapsto \bar{f}(a,z_1,z_2,\ldots ,z_{k-1}):=a\cdot\frac{\sum_{j=1}^{k-1} z_{j}}{\left(\sum_{j=1}^{k-1} z_{j}\right)+2a}
\]
is increasing in each of its arguments (cf.~\eqref{ftildefunctiondef}-\eqref{ftildepospartder}), and
repeat the steps presented in Section \ref{proofofTh2section}: 
we define the corresponding auxiliary sequences $h_n^\pm$ and $\tau_n^\pm$ and apply 
Theorem \ref{attractivitytheorem} with $k\in\{4,5\}$ and $A=2$ to show that  
$\tau_n^\pm\to \frac{k-3}{k-1}$ as $n\to +\infty$. 
\end{proof}

\subsection*{Acknowledgement}
The authors would like to thank the anonymous referees for their suggestions, 
which have improved the presentation of the material.

%%% Bibliography %%%
\ifpaper
	\bibliographystyle{siam}
	\bibliography{ssp_lmm_vss}
\else
	\sloppy
	\printbibliography
\fi

\end{document}